\theoremstyle{plain}
\newtheorem{theorem}{Theorem}[section]
\newtheorem{prop}[theorem]{Proposition}
\newtheorem{coro}[theorem]{Corollary}
\newtheorem{lemma}[theorem]{Lemma}
\newtheorem{ex}[theorem]{Example}
\newtheorem{rem}[theorem]{Remark}
\newtheorem{defi}[theorem]{Def.}
\newcommand{\ble}{\begin {lemma}}
\newcommand{\ele}{\end {lemma}}
\newcommand{\betahm}{\begin {theorem}}
\newcommand{\ethm}{\end {theorem}}
\newcommand{\bco}{\begin {coro}}
\newcommand{\eco}{\end {coro}}
\newcommand{\bex}{\begin {ex}}
\newcommand{\eex}{\end {ex}}
\newcommand{\be}{\begin {equation}}
\newcommand{\ee}{\end {equation}}
\newcommand{\bp}{\begin {proof}}
\newcommand{\ep}{\end {proof}}
\newcommand{\rt}{\rightarrow}
\begin{document}
\title{Deformed Laurent series rings and completions of the Weyl division ring }

\author{Gang Han 
	\\School of Mathematics, Zhejiang
	University\\mathhgg@zju.edu.cn \\[2mm]	
	Yulin Chen
	\\School of Mathematics, Zhejiang
	University\\tschenyl@163.com
	\\[2mm]	
	Zhennan Pan
	\\School of Mathematics, Zhejiang
	University\\pzn1025@163.com
}

\date{Jan. 26, 2024}

\maketitle
\begin{abstract}
	 Let $ L((T^{-1}))$ be the space of (inverse) Laurent series
	with coefficients in some field $L$. It has a  standard degree map and the induced topology. With its usual addition and a new product on this space  which  is continuous and preserves the  standard degree map,  it will be  a complete topological division ring, and called a  deformed Laurent series  ring. Under mild restrictions, we give  the necessary and sufficient conditions for a product on $ L((T^{-1}))$ to make it a  deformed Laurent series  ring. Then we apply the above theory to construct the completions of the Weyl division ring $D_1$, over some field of characteristic 0, with respect to a class of discrete valuations on it. Such completions are topological division rings with
	nice properties. For instance, their valuation rings are non-commutative Henselian rings; the centralizer of each element not in the center is commutative.

\end{abstract}

\textbf{MSC2020}: 16K40, 16L30

\textbf{Key words}: {deformed Laurent series ring,  Weyl division ring, complete topological division ring, non-commutative Henselian ring, commutative centralizer
	condition}
	\tableofcontents
	\section{Introduction}
	 \setcounter{equation}{0}\setcounter{theorem}{0}
	
	Let $K$ be a field of Characteristic 0. Let $A_1(K)$ be the first Weyl algebra over $K$ generated by 2 generators $\textbf{p},\textbf{\textbf{q}}$ satisfying the relation  \be\label{f17}[\textbf{q},\textbf{p}]=\textbf{qp}-\textbf{pq}=1.\ee We adopt the convention (\ref{f17}) following Joseph \cite{jo}, from which we benefit a lot. The algebra $A_1(K)$ is a simple Noetherian domain of GK dimension 2. The quotient division algebra $ D_1(K)$ of $A_1(K)$ is usually  called the  Weyl division ring or Weyl skew field.

People have been interested in $A_1(K)$ and $D_1(K)$ for quite some time, and have conducted extensive research in this area. For example in \cite{ml} Makar-Limanov showed that  $D_1(K)$ contains a two-generator free subalgebra.

There were also some research about the valuations on  $A_1(K)$ and $D_1(K)$. 	A class of discrete valuations of $D_1(K)$ compatible with the Bernstein
	filtration  were classified in 	\cite{w}. All the valuations on the real Weyl algebra
	$A_1(\mathbb R)$ whose residue field is $\mathbb R$ are classified in \cite{v}.
	
		A map
	$u:D_1(K)\rt \mathbb{R}_{-\infty}$ such that $-u:D_1(K)\rt \mathbb{R}_{\infty}, f\mapsto -u(f)$ is a rank 1 valuation will be called a {degree map} on $D_1(K)$. 	The degree maps $u$ on $A_1(K)$ over $K$ with the usual basis $\{\textbf{p}^i\textbf{q}^j|i,j\ge  0\}$ being an evaluation basis for  $u$ are classified in Proposition \ref{f28}. It is interesting that they are in   1-1  correspondence with the set  $\{(\rho,\sigma)\in \mathbb R^2|\rho+\sigma\ge0\}$. Up to equivalence, those degree maps $\mathsf{v}_{\rho,\sigma}$ with $(\rho,\sigma)\in \mathbb Z^2$, $\rho$ and $\sigma$ are coprime, and $\rho+\sigma\ge 0$, correspond to discrete valuations. See Section 2.	
	
	In the influential paper \cite{d}, Dixmier studied the structure of $A_1(K)$ in great detail, and in the end he posed six problems. The first problem  asks if
	every endomorphism of the Weyl algebra $A_1(K)$ is an automorphism. The conjecture that its answer is positive was later called Dixmier Conjecture. One approach to study this conjecture is to find all the solutions (up to automorphisms of  $A_1(K)$) of the equation $[x,y]=1$ for $x,y$ in $A_1(K)$. We believe that it is better to study this equation in $D_1(K)$ and its completions.
		One  purpose of this note is to construct the completions of $D_1(K)$ with respect to such discrete valuations. In fact, certain skew
	Laurent series algebras and the algebra of formal 
	pseudo-differential operators are  completions of $D_1(K)$ with respect to different valuations.  Since there are infinitely many nonequivalent  degree maps on $D_1(K)$, it will be interesting to find the relationship between different completions.
	
	To construct such completions, we did the following. Let $ L((T^{-1}))$ be the space of (inverse) Laurent series
	with coefficients in some field $L$, which has a  standard degree map and the corresponding topology.
	We define a new product on this space (while the addition is as usual)  which  is continuous and preserves the  standard degree map. We call it a  deformed Laurent series ring. See Def. \ref{e39}.  Assume that it satisfies
	\[ T\cdot a=aT+\sum_{i=0}^{-\infty}\delta_i(a)T^i, a\in L,\] where $\delta_i:L\rightarrow L, i\in \mathbb Z_{\le 0} ,$ are additive maps. 	
	Then the necessary and sufficient condition for it to make  $ L((T^{-1}))$ a  deformed Laurent series ring is (See Theorem \ref{f22})
	
		\[  \delta_i(ab)= a\delta_i(b)+\delta_i(a)b+ \sum_{k> 0}\sum_{ (l,j_1,\cdots,j_k)
	}^{j_1+\cdots+j_k-k+l=i}\binom{l}{k}\delta_l(a)\delta_{j_1}\cdots \delta_{j_k}(b), a,b\in L,\] where the summation is over all $l,j_1,\cdots,j_k\le 0$ and $j_1+\cdots+j_k-k+l=i$. The proof uses a lot of combinatorial identities.
	
	 This ring, denoted by 	$L((T^{-1};\Tilde{\delta}))$, is a complete topological skew field. It is a division ring of formal Laurent series with a deformed multiplication. Its center  is the field $F=\cap_{i\le 0} \text{Ker}(\delta_i)$, and such $\delta_i$ are $F$-linear differential operators on $L$. 
	 
	 The proof of above results is in Section 3, in which a symmetric version of such  results is also given.
	
Let $\widehat{D}_{r,s}$ be the	completion  of $D_1(K)$ with respect to $ \mathsf{v}_{r,s}$, where $(r,s )\in \mathbb Z^2$ with $r+s> 0$. To construct $\widehat{D}_{r,s}$, we need to first construct  $\widetilde{D}_{r,s}$  and $\check{D}_{r,s}$ using the theory of deformed Laurent series ring, which are also complete skew fields. Then we embed $D_1(K)$ into it. See Proposition \ref{fp1} and Corollary \ref{fp2}. Non-commutative Henselian rings are defined and studied in \cite{a}. The respective valuation rings of  $\widetilde{D}_{r,s}$, $\check{D}_{r,s}$, and $\widehat{D}_{r,s}$  are in fact non-commutative Henselian rings.
See Proposition \ref{f35}.

 In Section 5, a pair of topological generators of $\widehat{D}_{r,s}$ is given.

In \cite{b}, a noncommutative algebra $A$ is said to satisfy the commutative centralizer
condition (the ccc, for short) if the centralizer of each element not in the center of $A$ is a commutative algebra.  Bavula found that many algebras, including $D_1(K)$, satisfy the ccc  \cite{b}. As in \cite{b} one can show that $\widetilde{D}_{r,s}$  and $\check{D}_{r,s}$, thus $\widehat{D}_{r,s}$,  also satisfy the ccc. See Section 6.\\[1mm]
	
Here are some of the conventions of the paper.

	All the rings and algebras are associative and unital, with the multiplicative identity $1$. All the ring (and algebra) homomorphisms are unital. 
	
	For a ring $R$, $R^\times$ is the set of nonzero elements in $R$.

	For $a,b\in \mathbb R$, let
	\begin{equation}
		[a,b]_k=
		\left\{
		\begin{array}{ll}
			\prod_{i=0}^{k-1} (a-ib),  &k> 0;\\
			1, & k=0.
		\end{array}
		\right.
	\end{equation}
	
	For $k\ge 0$, let $[a]_k=[a,1]_k$. Then $[a]_k=a(a-1)\cdots(a-k+1)$ for $k\ge 1$, and $ [a]_0=1 $.
	
For $a\in \mathbb R, k\ge0$,	$\binom{a}{k}=[a]_k/k!$.

		$x,y,z,w$ are elements in $A_1(K)$, where 	$K$ is a field.

	For $f\in K(X), i\ge 0$, $f^{(i)}$ denotes the $i$-th derivative of $f$.
	
	$X,Y,T$ are indeterminants.
	
	$char\ L$ denotes the characteristic of the field $L$.
	
	$\mathbb{R}_{\infty}=\mathbb{R}\cup \{\infty\}$;		$\mathbb{R}_{\ge 0}=\{a\in \mathbb{R}|a\ge 0\}$; and similar notations will be used.
		

	
	For integers $m$ and $n$, $gcd\{m,n\}$ is the non-negative greatest common divisor of $m$ and $n$.

	"iff" stands for "if and only if".
	
		All the indices \textbf{under} the sign $\sum$ will be assumed to be integers in $\mathbb Z_{\le 0}$, unless otherwise stated. For example, $\sum_{(j_1,\cdots,j_k)}f(j_1,\cdots,j_k )$ means to sum $f(j_1,\cdots,j_k )$ over all $(j_1,\cdots,j_k)$ with $j_i\in \mathbb Z_{\le 0}$ for $ i=1,\cdots,k$. Here is another example,
	
	\be\label{e43}\sum_{(j_1,\cdots,j_k)}^{j_1+\cdots+j_k-k=1} f(j_1,\cdots,j_k )\ee means to make summations of $f(j_1,\cdots,j_k )$ over all $(j_1,\cdots,j_k)\in (\mathbb Z_{\le 0})^{k}$ satisfying
	$j_1+\cdots+j_k-k=1$.
	
\centerline{\textbf{Acknowledgements}}

We would like to heartily thank Binyong Sun for his advice to improve the results. We are very thankful to  Dongwen Liu, Fangyang Tian,  Chengbo  Wang,  Shilin Yu, Tao Feng and Yeshun Sun for 
helpful discussions during the research. 
We are grateful to	Qinghu Hou at Nankai University, who told us Lemma \ref{fl1} and its proof.
	
	\section{A class of degree maps on $D_1$}
	
	 \setcounter{equation}{0}\setcounter{theorem}{0}
	Let	$R$ be a ring. 
	
	\begin{defi}\label{}
	 A map $v:R\rt \mathbb{R}_{\infty}$ is called a rank 1 valuation on $R$ if it satisfies 
	\begin{enumerate}		
		\item $v(fg)=v(f)+v(g), \forall f,g\in R $;
		\item $v(f+g)\ge min\{v(f),v(g)  \},\forall f,g\in R$;
		\item $v(f)=\infty$ iff $f=0$.
	
	\end{enumerate}
	
	If $v(R^\times)=0$ then the valuation $v$ is called trivial; 
	If $v(R^\times)\cong \mathbb Z$ then the valuation $v$ is called discrete.
	
		Two rank 1 valuations $v_1,v_2$ on $R$ are called equivalent if there exists some $c\in\mathbb R_{>0}$, $v_2(f)=cv_1(f)$ for all $f\in R^\times$.
	\end{defi}

All the valuations and degree maps considered in this paper will be assumed to be rank 1.

	A map
	$u:R\rt \mathbb{R}_{-\infty}$ such that $-u:R\rt \mathbb{R}_{\infty}, f\mapsto -u(f)$ is a rank 1 valuation will be called a \textbf{degree map} on $R$, i.e., $u$ is a degree map on $R$ iff $u$ satisfies 
		\be \label{e12}
	\begin{array}{lll}
		(1)\ u(fg)=u(f)+u(g), \forall f,g\in R;	\\		
		(2)\ u(f+g)\le max\{u(f),u(g)  \},\forall f,g\in R;\\		
     	(3)\ u(f)=-\infty\ \text{iff} \ f=0.
	\end{array}
\ee

	 A degree map $u$ is called trivial (resp. discrete) if so is the valuation $-u$. Two degree maps $u_1,u_2$ on $R$ are called equivalent if so are $-u_1,-u_2$.

	For a degree map $u$ on $R$, there exists a metric  on $R$ defined by $d(f,g)=e^{u(f-g)}, f,g\in R.$ Given a degree map $u$ on $R$, we will always equip $R$ with the topology induced by this metric.
	
Let  $K$ be a field of characteristic 0.	Assume that $A$ is an algebra over $K$.
\begin{defi}
 A valuation $v:A\rt \mathbb R_{\infty}$ is said to be over $K$,  or $v$ is a $K$-valuation, if $v|_K$ is the trivial valuation on $K$. Similarly,  A degree map $u:A\rt \mathbb R_{-\infty}$ is said to be over $K$, or $u$ is a $K$-degree map, if $-u$ is a  valuation of $A$ over $K$.
\end{defi}	
 Let $\mathcal{C}=\{y_i|i\in I\}$ be a $K$-basis of $A$. $\mathcal{C}$ is called an evaluation basis for the $K$-degree map $u$ on $A$, if \[ u(\sum_{i\in I} \lambda_i y_i)=sup\{u(y_i)| i\in I, \lambda_{i}\ne0\},\ \sum_{i\in I} \lambda_i y_i\in A^\times.\] See Definition 2 of \cite{kkl}.

Assume  $u:A\rt \mathbb Z_{-\infty}$ is a discrete degree map over $K$. Let $A^{\le i}=u^{-1}(\mathbb Z_{\le i}\cup \{-\infty\})$. Then by (\ref{e12}) one has $A^{\le i}$ is a $K$-vector space for each $i$, and $A^{\le i}\cdot A^{\le j}\subseteq A^{\le i+j}$. So one has a graded $K$-algebra \[gr(A)=\oplus_{i\in \mathbb Z}\ gr_i(A), gr_i(A)=A^{\le i}/A^{\le i-1},\] called the associated graded algebra of $A$ with respect to $u$.\\[2mm]

 For simplicity, denote the Weyl algebra $A_1(K)$ and the Weyl division ring $D_1(K)$ by $A_1$ and $D_1$ respectively. 
 
	 One knows that $\mathcal{B}=\{\textbf{p}^i\textbf{q}^j|i,j\in \mathbb Z_{\ge  0}\}$ is a basis of $A_1$. For 
	$ z=\sum {\lambda_{i,j}}\textbf{p}^i\textbf{q}^j\in A_1\setminus\{0\}$, let $supp\{z\} = \{(i,j)\in (\mathbb{Z}_{\ge  0})^2|\lambda_{i,j}\neq 0\}, supp\{0\}=\o$.
	
	The formula for the product of the basis elements is 	\be \label{f32} \textbf{p}^i\textbf{q}^j\cdot \textbf{p}^s\textbf{q}^t=	\textbf{p}^{i+s}\textbf{q}^{j+t}+\sum_{k\ge1}\frac{[j]_k[s]_k}{k!}\textbf{p}^{i+s-k}\textbf{q}^{j+t-k}.\ee
	
	Assume $(\rho,\sigma)\in \mathbb R^2$. The map \[\mathbf{v}_{\rho,\sigma}:A_1\rt \mathbb R_{-\infty}, \ \ 0\mapsto -\infty;  \textbf{p}^i\textbf{q}^j\mapsto \rho i+\sigma j, 0\ne\sum \lambda_{i,j}\textbf{p}^i\textbf{q}^j\mapsto sup \{\mathsf{v}_{\rho,\sigma}(\textbf{p}^i\textbf{q}^j)|  \lambda_{i,j}\ne0\} \]  is originally defined in \cite{d} in the case $(\rho,\sigma)\in \mathbb R^2\setminus (0,0)$ and $\rho+\sigma>0$.

		\ble For  $(\rho,\sigma)\in \mathbb R^2\setminus (0,0)$ and $\rho+\sigma\ge0$, 
	$\mathsf{v}_{\rho,\sigma}:A_1\rt \mathbb R_{-\infty}$ is a degree map.
	\ele

	\bp
	It  follows from (\ref{f32}) that $\mathsf{v}_{\rho,\sigma}(\textbf{p}^i\textbf{q}^j\cdot \textbf{p}^s\textbf{q}^t)=	\mathsf{v}_{\rho,\sigma}(\textbf{p}^i\textbf{q}^j)+	\mathsf{v}_{\rho,\sigma}(\textbf{p}^s\textbf{q}^t),\  \textbf{p}^i\textbf{q}^j,\textbf{p}^s\textbf{q}^t\in \mathcal{B}$. In fact in Lemma 2.4 of \cite{d}, it is shown that
	$\mathsf{v}_{\rho,\sigma}(zw)=	\mathsf{v}_{\rho,\sigma}(z)+	\mathsf{v}_{\rho,\sigma}(w)\ (\forall z,w \in A_1)$ if $\rho+\sigma>0$, which can be proven similarly in the case $\rho+\sigma=0$. Thus (\ref{e12}) (1) holds.

	As $supp\{z+w\}\subseteq supp\{z\}\cup supp\{w\}$, one has $\mathsf{v}_{\rho,\sigma}(z+w) \leq \max\{\mathsf{v}_{\rho,\sigma}(z),\mathsf{v}_{\rho,\sigma}(w)\}$, which is  (\ref{e12}) (2).
	It is clear that $\mathsf{v}_{\rho,\sigma}$ also satisfies (\ref{e12}) (3).
	Therefore $\mathsf{v}_{\rho,\sigma}$ is a degree map on $A_1$.
	
		\ep

\begin{prop}\label{f28}
(1) The degree maps on $A_1$ over $K$  with $\mathcal{B}$ being an evaluation basis are exactly those $\mathsf{v}_{\rho,\sigma}$ with $ \rho+\sigma\ge0$. See figure 1. 

(2) Two  degree maps $\mathsf{v}_{\rho_i,\sigma_i} (i=1,2)$ on $A_1$ are equivalent iff there exists some $r\in \mathbb R_{>0}$, $ (\rho_2,\sigma_2)=r(\rho_1,\sigma_1)$.
 
(3) $\mathsf{v}_{\rho,\sigma}$ is the trivial degree map on $A_1$ iff $(\rho,\sigma)=(0,0)$.
	
	The degree map $\mathsf{v}_{\rho,\sigma}$, where $\rho+\sigma\ge0$ and $(\rho,\sigma)\ne (0,0)$, is discrete  iff $\rho,\sigma$ are $\mathbb Q$-dependent. In this case, up to equivalence one can choose $(\rho,\sigma)$ to be coprime integers.

\end{prop}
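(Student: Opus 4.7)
The plan is to handle the three parts in order. For the backward direction of (1), the preceding lemma already gives that $\mathsf{v}_{\rho,\sigma}$ is a $K$-degree map when $\rho+\sigma\ge 0$, and $\mathcal{B}$ is an evaluation basis for it by construction. For the forward direction, let $u$ be a $K$-degree map on $A_1$ with $\mathcal{B}$ an evaluation basis, and set $\rho=u(\textbf{p})$ and $\sigma=u(\textbf{q})$, which are real numbers since $\textbf{p},\textbf{q}\in A_1^\times$. Multiplicativity of $u$ yields $u(\textbf{p}^i\textbf{q}^j)=i\rho+j\sigma$ on every element of $\mathcal{B}$, and the evaluation-basis formula then forces $u=\mathsf{v}_{\rho,\sigma}$ on all of $A_1$. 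The inequality $\rho+\sigma\ge 0$ drops out of the defining relation: since $1=\textbf{q}\textbf{p}-\textbf{p}\textbf{q}$, subadditivity gives $0=u(1)\le\max\{u(\textbf{q}\textbf{p}),u(\textbf{p}\textbf{q})\}=\rho+\sigma$.

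For (2), I would evaluate both sides at $\textbf{p}$ and $\textbf{q}$: if $\mathsf{v}_{\rho_2,\sigma_2}=r\,\mathsf{v}_{\rho_1,\sigma_1}$ with $r>0$, then $\rho_2=r\rho_1$ and $\sigma_2=r\sigma_1$. Conversely, on basis elements one has $\mathsf{v}_{r\rho,r\sigma}(\textbf{p}^i\textbf{q}^j)=r(i\rho+j\sigma)=r\,\mathsf{v}_{\rho,\sigma}(\textbf{p}^i\textbf{q}^j)$, and since a positive scalar commutes with $\sup$, the evaluation-basis formula extends this identity to every nonzero element of $A_1$.

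For (3), triviality of $\mathsf{v}_{\rho,\sigma}$ forces $\rho=\mathsf{v}_{\rho,\sigma}(\textbf{p})=0$ and $\sigma=\mathsf{v}_{\rho,\sigma}(\textbf{q})=0$, while the converse is immediate. For the discreteness characterization, I would extend $\mathsf{v}_{\rho,\sigma}$ uniquely to the quotient division ring $D_1$, where the image of $D_1^\times$ is exactly the additive subgroup $\mathbb{Z}\rho+\mathbb{Z}\sigma$ of $\mathbb{R}$. By the standard dichotomy for additive subgroups of $\mathbb{R}$, this subgroup is isomorphic to $\mathbb{Z}$ precisely when $\rho,\sigma$ are $\mathbb{Q}$-linearly dependent and not both zero; otherwise it is free of rank $2$, hence dense in $\mathbb{R}$, and in particular not $\cong \mathbb{Z}$. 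When $\rho,\sigma$ are $\mathbb{Q}$-dependent and nonzero, write $(\rho,\sigma)=\alpha(r,s)$ with $\gcd\{r,s\}=1$; after possibly replacing $(\alpha,r,s)$ by $(-\alpha,-r,-s)$ we may assume $\alpha>0$, in which case $r+s\ge 0$ follows from $\rho+\sigma\ge 0$, and part (2) shows $\mathsf{v}_{\rho,\sigma}$ is equivalent to $\mathsf{v}_{r,s}$.

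The main obstacle is the discreteness step in part (3): identifying the value group after extending to $D_1$ and then applying the classical discrete-versus-dense dichotomy for additive subgroups of $\mathbb{R}$. The remaining verifications are short and reduce, via the evaluation-basis property, to computing the degree map on $\textbf{p}$ and $\textbf{q}$.
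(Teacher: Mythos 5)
Your proposal is correct and follows essentially the same route as the paper: read off $\rho=u(\textbf{p})$, $\sigma=u(\textbf{q})$, use multiplicativity and the evaluation-basis property to force $u=\mathsf{v}_{\rho,\sigma}$, derive $\rho+\sigma\ge 0$ from $1=\textbf{qp}-\textbf{pq}$, and evaluate at $\textbf{p},\textbf{q}$ for the equivalence statement. You are in fact slightly more careful than the paper on two points -- the paper attributes the equality $u(z)=\sup\{\ldots\}$ to the subadditivity axiom alone rather than to the evaluation-basis hypothesis, and it disposes of the discreteness claim with the one-line remark that $\mathsf{v}_{\rho,\sigma}(A_1^\times)=\rho\mathbb Z_{\ge0}+\sigma\mathbb Z_{\ge0}$, whereas you pass to the value group in $D_1$ and invoke the discrete-versus-dense dichotomy.
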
	
\begin{center}
	\begin{tikzpicture}
	\draw [very thin, color=gray](-1.7,1.9)  -- (2,1.9);
	\draw [very thin, color=gray](-0.7,1)  -- (2,1) node[above right] {$\rho+\sigma \geq 0$};
	\draw [very thin, color=gray](1.3,-1)  -- (2,-1);
	\draw[->,very thick] (-1.9,0) -- (2.1,0) node[right] {$\rho$};
	\draw[->,very thick] (0,-1.9) -- (0,2.1) node[above] {$\sigma$};
	\draw[very thick](-1.9,1.9) -- (1.9,-1.9);

\end{tikzpicture}
\end{center}
\begin{center}
	figure 1
\end{center}

\bp
(1) Let $u:A_1\rt \mathbb R_{-\infty}$ be  a degree map. Assume $u(\textbf{p})=\rho, u(\textbf{q})=\sigma$. Then by (\ref{e12}) (1), $u(\textbf{p}^i\textbf{q}^j)=\rho i+\sigma j, i,j\ge 0$; by (\ref{e12}) (2), for $0\ne z=\sum \lambda_{i,j}\textbf{p}^i\textbf{q}^j\in A_1$, $u(z)=sup\{\mathsf{v}_{\rho,\sigma}(\textbf{p}^i\textbf{q}^j)|  \lambda_{i,j}\ne0\}$. Thus $u=\mathsf{v}_{\rho,\sigma}$.

As $1=\textbf{qp}-\textbf{pq}$, by (\ref{e12}) (2), \[0=\mathsf{v}_{\rho,\sigma}(1)\le max\{\mathsf{v}_{\rho,\sigma}(\textbf{qp}), \mathsf{v}_{\rho,\sigma}(\textbf{pq})\}=\rho+\sigma. \] Thus one has $\rho+\sigma\ge0$.

Conversely, for any $(\rho,\sigma)$ with $\rho+\sigma\ge0$, $\mathsf{v}_{\rho,\sigma}$ is a degree map on $A_1$ by last lemma.

(2) The 'if' part is clear. Now we prove 'only if'. Let $\mathsf v_i=\mathsf{v}_{\rho_i,\sigma_i}, i=1,2$. As $\mathsf v_1,\mathsf v_2$ are equivalent, there exists some $r>0$, $\mathsf v_2=r\mathsf v_1$. Thus \[(\rho_2,\sigma_2)=(\mathsf v_2(\textbf{p}),\mathsf v_2(\textbf{q}))=(r\mathsf v_1(\textbf{p}),r\mathsf v_1(\textbf{q}))=r(\rho_1,\sigma_1).\]

 (3) This follow from $\mathsf{v}_{\rho,\sigma}(A_1^\times)=\rho \mathbb Z_{\ge0}+\sigma \mathbb Z_{\ge0}.$

\ep
		
		As $A_1$ is an Ore domain, a degree map $u:A_1\rt \mathbb R_{-\infty}$ can be extended to 
	\[u:D_1\rt \mathbb R_{-\infty}, zw^{-1}\mapsto u(z)-u(w)\] (cf. Proposition 9.1.1 of \cite{co}), and a  degree map on $D_1$ restricts to a  degree map on $A_1$. Thus there is a 1-1 correspondence between degree maps on  $A_1$ and degree maps on $D_1$. We will regard $\mathsf{v}_{\rho,\sigma}$ as degree maps on $D_1$. 
	
	\bex
	
	1. $(\rho,\sigma)=({1,-1})$. 
	 Consider the skew
	Laurent series algebra $L((T^{-1};\gamma)), L=K(\alpha)$ with $ \gamma\in \text{Aut}_K(L), \gamma(\alpha)=\alpha-1, T f=\gamma(f) T\ for\ f\in L$ (cf. \cite{b}). There is an embedding
	\[D_1\rt L((T^{-1};\gamma)), \textbf{p}\mapsto T, \textbf{q}\mapsto T^{-1}\alpha\ (\text{thus\ }\textbf{pq}\mapsto \alpha) ,\] and $L((T^{-1};\gamma))$ is the  completion of $D_1$ with respect to the topology induced by the  degree map $\mathsf{v}_{1,-1}$. 
	
	2.  $(\rho,\sigma)=({0,1})$. Consider the algebra of formal 
	pseudo-differential operators  $L((T^{-1};\delta)), L=K(\alpha)$,   with $ \delta\in \text{Der}_K(L), \delta(f)=f'$ for $ f\in L$. One has $T f=fT+\delta(f)$. See \cite{g}.
	 There is an embedding
	\[D_1\rt L((T^{-1};\delta)), \textbf{p}\mapsto \alpha, \textbf{q}\mapsto T,\]  and $L((T^{-1};\delta))$ is the  completion of $D_1$ with respect to the topology induced by the  degree map $\mathsf{v}_{0,1}$. 
	 
	\eex
	
In this paper we will construct the  completions of $D_1$ with respect to the topology induced by the  degree map $\mathsf{v}_{\rho,\sigma}$, where $(\rho,\sigma )\in \mathbb Z^2$ with $\rho+\sigma> 0$.

	\section{Deformed Laurent series rings with the standard degree map }
	 \setcounter{equation}{0}\setcounter{theorem}{0}
	Let $L$ be a field, and  $L((T^{-1}))=\{\sum_{i\le k}a_iT^i|a_i\in L, k\in \mathbb Z\}$ be the left $L$-vector space of (inverse) Laurent series. We temporarily forget its usual multiplication. 
	
	The map
	\be\label{e40}deg:L((T^{-1}))\rt \mathbb \mathbb Z_{-\infty}, \ \ 0\mapsto -\infty, L\setminus \{0\}\mapsto 0, \sum_{i\le k}a_iT^i\mapsto  k\ (a_k\ne 0),\ee satisfies 
	(2) and (3) of (\ref{e12}), 
	and we call it the standard degree map on $L((T^{-1}))$.  Equip $L((T^{-1}))$ with the  topology induced by the metric \[d(f,g)=e^{deg(f-g)}, f,g\in L((T^{-1})).\]

	\begin{defi}  \label{e39}
		Let $L((T^{-1}))$  have  the usual addition, the standard degree map and the corresponding topology.
		Assume  $L((T^{-1}))$ has a   multiplication \[\Tilde{\delta}:L((T^{-1}))\times L((T^{-1}))\rt L((T^{-1})), (f,g)\mapsto f\cdot g\] that satisfies
		\begin{enumerate} 
				\item the multiplication $\Tilde{\delta}$ is associative, and distributive over addition ; $\Tilde{\delta}$ is continuous;
			\item $\Tilde{\delta}$ agrees with the left multiplication of $L$ on $L((T^{-1}))$,i.e., $b\cdot\sum_{i\le k}a_iT^i=\sum_{i\le k}(ba_i)T^i$;
			
				\item $T^i\cdot T^j=T^{i+j}, i,j\in \mathbb Z;$ ($T^0=1$);
			
					\item   $deg(f\cdot g)=deg(f)+deg(g), f,g\in L((T^{-1}))$.
		\end{enumerate}
		
		Then $L((T^{-1}))$ is a  ring, and $deg$ satisfies all the 3 axioms in (\ref{e12}) thus is a degree map on it. We denote it by $L((T^{-1};\Tilde{\delta}))$.
	\end{defi}

	As 	$1\cdot aT^i=aT^i$ and $ aT^i\cdot 1=a(T^i\cdot 1)=aT^i$ by the axioms, the multiplicative identity 1 of $L$ is also the multiplicative identity of $L((T^{-1};\Tilde{\delta}))$, and $L((T^{-1}))$ contains $L$ as a subring.

\ble
The ring	$L((T^{-1};\Tilde{\delta}))$ is a topological  division ring  which is  complete with respect to the topology induced by the degree map $deg$.
\ele

\bp It is clear that the addition and multiplication of $L((T^{-1};\Tilde{\delta}))$ is continuous with respect to its topology. So we only need
to show that any nonzero element of $L((T^{-1};\Tilde{\delta}))$ has a multiplicative inverse and the inverse map is continuous. 

Assume $0\ne z\in L((T^{-1};\Tilde{\delta}))$. One has $z=\sum_{i\le n}a_iT^i$. Then \[z=a_nT^n(1+w), w=\sum_{k\ge 1} w_k, w_k=T^{-n}a_{n}^{-1}a_{n-k}T^{n-k}.\]
Note that $deg(w_k)= -k$ for $k\ge1$, so $\sum_{k\ge 1} w_k$ is convergent, and $deg(w)\le -1$. Thus \[z^{-1}=(1+w)^{-1}T^{-n}a_n^{-1},\  (1+w)^{-1}=\sum_{j\ge0}(-1)^jw^j.\] So any nonzero element of $L((T^{-1};\Tilde{\delta}))$ has a multiplicative inverse and $L((T^{-1};\Tilde{\delta}))$ is a division ring. It follows from the above formula that  the inverse map is continuous.

\ep

We will
call $L((T^{-1};\Tilde{\delta}))$ a deformed Laurent series (division) ring with the standard degree map. 	
	Note that $L((T^{-1};\Tilde{\delta}))$ should not be confused with the deformation of algebras in \cite{f}.
	
	The purpose of this section is to find the necessary and sufficient conditions for $\Tilde{\delta}$ to make  $L((T^{-1};\Tilde{\delta}))$  a deformed Laurent series ring with the standard degree map. 
	
	As the multiplication of $L((T^{-1};\Tilde{\delta}))$ is  distributive over addition, one  need to know the formula for $T^i\cdot a$ for $a\in L, i\in \mathbb Z$, and $aT^i\cdot bT^j, i,j\in \mathbb Z$. One first consider $T\cdot a$.
	\ble 
	Assume that $L((T^{-1};\Tilde{\delta}))$ is a deformed Laurent series ring with the standard degree map. Then there exist maps $\delta_i:L\rightarrow L, i\in \mathbb Z_{\le1}$ such that \be\label{e23} T\cdot a=\delta_1(a)T+\sum_{i=0}^{-\infty}\delta_i(a)T^i, a\in L. \ee
	One has $\delta_i$ preserve addition for all $i$; and $\delta_1:L\rightarrow L$ is an injective ring homomorphism.
	\ele
	
	\bp
	by $ T(a+b)=Ta+Tb$, one has $\delta_i$ preserve addition for all $i$.
	$T1=T$ implies $\delta_1(1)=1$;
	$(Ta)b=Tab$ implies $\delta_1(ab)=\delta_1(a)\delta_1(b)$ for all $a,b\in L$, so $\delta_1:L\rightarrow L$ is a ring homomorphism. 
	
	Then $\text{Ker}(\delta_1)$
	is a proper ideal of $L$, which must be 0, thus $\delta_1$ is injective.
	
	\ep
	

	\ble\label{e68}
	Let $L((T^{-1};\Tilde{\delta}))$ be a deformed Laurent series ring with the standard degree map satisfying (\ref{e23}). Assume that $\delta_1=1$. Then \be \label{e45}T\cdot a=aT+\sum_{i=0}^{-\infty}\delta_i(a)T^i , a\in L,\ee
	and for $n,m\in \mathbb Z$, \be\label{e21} T^n\cdot a=\sum_{k\ge 0}\sum_{(j_1,\cdots,j_k )}\binom{n}{k}\delta_{j_1}\cdots \delta_{j_k}(a)T^{j_1+\cdots+j_k-k+n}\ee
	\[=a T^n+\sum_{k>0}\sum_{(j_1,\cdots,j_k )}\binom{n}{k}\delta_{j_1}\cdots \delta_{j_k}(a)T^{j_1+\cdots+j_k-k+n},\]
	\be\label{e16} aT^m\cdot bT^n=ab T^{m+n}+\sum_{k>0}\sum_{(j_1,\cdots,j_k )}\binom{m}{k}a\delta_{j_1}\cdots \delta_{j_k}(b)T^{j_1+\cdots+j_k-k+m+n}.\ee
	\ele
	\bp
	First we prove (\ref{e21}) in the case $n\ge 1$, by induction on $n$.
	
	The case $n=1$ is clear. Assume $n>1$ and (\ref{e21}) holds for all positive integers $ \le n$. Then 
	\begin{align*}
		T^{n+1}\cdot a &=T(T^n \cdot a)=T \cdot \left(\sum_{k \ge0}^{} \sum_{(j_1,\cdots,j_k)}^{}\binom{n}{k}\delta_{j_1} \cdots \delta _{j_k}(a)T^{j_1+\dots +j_k-k+n} \right)\\
		&=\sum_{k \ge0}^{} \sum_{(j_1,\dots,j_k)}^{}\binom{n}{k}(T \cdot \delta_{j_1} \dots \delta_{j_k}(a))T^{j_1+\cdots +j_k-k+n}\\
		&=\sum_{k \ge0}^{} \sum_{(j_1,\dots,j_k)}^{}\binom{n}{k}\left(\delta_{j_1}\cdots \delta_{j_k}(a)T+\sum_{l\le0}\delta _l(\delta_{j_1}\cdots \delta_{j_k}(a))T^l \right)\cdot T^{j_1+\cdots +j_k-k+n}\\
		&=\sum_{k \ge0}^{} \sum_{(j_1,\dots,j_k)}^{}\binom{n}{k}\delta_{j_1}\cdots \delta_{j_k}(a)T^{j_1+\cdots+ j_k-k+n+1}\\
		& +\sum_{k\ge 0}^{} \binom{n}{k} \sum_{(l,j_1,\cdots ,j_k)}^{} \delta_l \delta_{j_1}\cdots \delta _{j_k}(a)T^{j_1+\cdots+j_k+l-(k+1)+(n+1)} \\
		&=aT^{n+1}+\sum_{k>0}\sum_{(j_1,\cdots ,j_k)}[\binom{n}{k}+\binom{n}{k-1}] \delta_{j_1}\cdots \delta _{j_k}(a)T^{j_1+\cdots+j_k-k+(n+1)}\\
		&=aT^{n+1}+\sum_{k>0}\sum_{(j_1,\cdots ,j_k)}\binom{n+1}{k}\delta _{j_1}\cdots \delta _{j_k}(a)T^{j_1+\cdots+j_k-k+(n+1)}.
	\end{align*}
	Thus (\ref{e21}) holds for all $n\ge 1$.
	
	Next we consider the case $n=-1$. One has
	
	\begin{align*}
		&T\left(aT^{-1}+\sum_{k\ge1}\sum_{(j_1,\dots,j_k)}\binom{-1}{k}\delta_{j_1}\cdots \delta_{j_k}(a)T^{j_1+\cdots+j_k-k-1}\right)\\
		&=(aT+\sum_{i\le0}\delta_i(a)T^i)T^{-1}+\sum_{k\ge1 }\sum_{(j_1,\dots,j_k)}\binom{-1}{k}T\cdot \delta_{j_1}\cdots \delta_{j_k}(a)T^{j_1+\cdots+j_k-k-1}\\
		&=a+\sum_{i\le 0}\delta_i(a)T^{i-1}+\sum_{k\ge1 }\sum_{(j_1,\dots,j_k)}\binom{-1}{k}\left [ \delta_{j_1}\cdots\delta_{j_k}(a)\cdot T  +\sum_{l\le0}\delta_l(\delta_{j_1}\cdots \delta_{j_k}(a))T^l\right ] T^{j_1+\cdots +j_k-k-1}\\
		&=a+\sum_{i\le0}\delta_{i}(a)T^{i-1}+
		\sum_{k\ge1}\sum_{(j_1,\dots,j_k)}\binom{-1}{k}\delta_{j_1}\cdots \delta_{j_k}
		(a)T^{j_1+\cdots +j_k-k}\\
		& + \sum_{k\ge1}\sum_{(l,j_1,\dots,j_k)}\binom{-1}{k}\delta_{l}\delta_{j_1}\cdots \delta_{j_k}(a)T^{l+j_1+\cdots+j_k-k-1}\\
		&=a+\sum_{i\le0}[1+\binom{-1}{1}]\delta_{i}(a)T^{i-1}+
		\sum_{k\ge 2}\sum_{(j_1,\dots,j_k)}[\binom{-1}{k}+ \binom{-1}{k-1}]\delta_{j_1}\cdots \delta_{j_k}
		(a)T^{j_1+\cdots +j_k-k}\\
		& =a\\
	\end{align*}
	Multiply $T^{-1}$ on both sides from the left, one has  \begin{align*}
		T^{-1}a=aT^{-1}+\sum_{k\ge1}\sum_{(j_1,\dots,j_k)}\binom{-1}{k}\delta_{j_1}\cdots \delta_{j_k}(a)T^{j_1+\cdots+j_k-k-1}.
	\end{align*}
	So (\ref{e21}) holds for $n=-1$.
	Assume (\ref{e21}) holds for $n=-m$ with $m>0$, i.e.,
	\begin{align*}
		T^{-m}a=\sum_{k\ge0}\sum_{(l_1,\dots,l_k)}\binom{-m}{k}\delta_{l_1}\cdots\delta_{l_k}(a)T^{l_1+\cdots+l_k-k-m}.
	\end{align*}
	\begin{align*}
		T^{-1-m}a &=T^{-1}(T^{-m}a)=T^{-1}\sum_{k\ge0}\ \sum_{(l_1,\dots,l_k)}\binom{-m}{k} \delta_{l_1}\cdots \delta_{l_k}(a)T^{l_1+\cdots+l_k-k-m}\\
		&=\sum_{k\ge0}\ \sum_{(l_1,\dots,l_k)}\binom{-m}{k}\left [ T^{-1}\delta_{l_1}\cdots\delta_{l_k}(a) \right ]T^{l_1+\cdots +l_k-k-m}\\
		&=\sum_{k\ge0}\ \sum_{(l_1,\dots,l_k)}\binom{-m}{k}\left(\sum_{u\ge0}\ \sum_{(j_1,\dots,j_u)}\binom{-1}{u}\delta_{j_1}\cdots\delta_{j_u}(\delta_{l_1}\cdots \delta_{l_k}(a))T^{j_1+\cdots+j_u-u-1} \right)T^{l_1+\cdots+l_k-k-m}\\
		&=\sum_{k\ge0,u\ge0  }\ \sum_{(j_1,\cdots,j_u;l_1,\dots,l_k)}\binom{-m}{k}\binom{-1}{u}\delta_{j_1}\cdots \delta_{j_u}\delta_{l_1}\cdots\delta_{l_k}(a)T^{j_1+\cdots+j_u+l_1+\cdots+l_k-(u+1)-(m+k)}\\
		&=\sum_{p\ge0}\  \sum_{k,u\ge0}^{k+u=p}\ \sum_{(s_1,\dots,s_p)} \binom{-m}{k}\binom{-1}{u}\delta_{s_1}\cdots\delta_{s_p}(a)T^{s_1+\cdots+s_p-p-(m+1)}   \\
		&=\sum_{p\ge0}\ \sum_{(s_1,\dots,s_p)} \binom{-(m+1)}{p}\delta_{s_1}\cdots\delta_{s_p}(a)T^{s_1+\cdots +s_p-p-(m+1)}.
	\end{align*}
	So (\ref{e21}) holds for $-(m+1)$, thus it holds for all $n\in \mathbb Z_{<0}$. So (\ref{e21}) holds for all $n\in \mathbb Z$.
	
	It is clear that (\ref{e16}) follows from (\ref{e21}).
	\ep

	The lemma assumes $\delta_1=1$, if $1\ne \delta_1\in \text{Aut}(L)$, the corresponding results can be obtained similarly.
	
 If  $\delta_i= 0$ for all $i\le 0$, then $L((T^{-1};\Tilde{\delta}))$ is the usual field of  Laurent series.
 	Henceforth we will always assume that $\delta_1=1$ and $\delta_i\ne 0$  for some $i\le 0$, unless otherwise stated.
	
	Henceforth we will always assume that the characteristic of the field $L$ is 0.
	
	For a ring $A$ and $\emptyset\ne S,D\subseteq A$, let \[C(S,D)=\{b\in D|ab=ba, \forall a\in S\}\] be the centralizer of $S$ in $D$, and $C(A):=C(A,A)$ the center of $A$.
	
	\ble\label{e70}
	Let $L((T^{-1};\Tilde{\delta}))$ be a deformed Laurent series ring with the standard degree map satisfying (\ref{e45}), and  $\delta_i\ne 0$  for some $i\le 0$. Let $A=L((T^{-1};\Tilde{\delta}))$. Then $C(L,A)=L$; the center $C(A)$ is $\cap_{i\le 0} \text{Ker}(\delta_i)$, which is a field.

	\ele
	\bp Let $i_0=max\{i|i\le 0, \delta_i\ne 0\}. $
	Let   $a\in L\setminus \text{Ker}(\delta_{i_0})$. Then
	$C(a,A)\supseteq L$. 
	If $C(a,A)\ne L$, one can assume that there exists  $\sum_{i\le n}b_i T^i \in C(a,A) \setminus L$ with $n\ne 0$ and $b_n\ne 0$.
	\begin{align*}
		\sum_{i\le n}^{}b_i T^i \cdot a &=\sum_{i\le n} b_i (T^i \cdot a)\\
		&=\sum _{i\le n}b_i \left(a\cdot  T^i+\sum_{k>0} \sum_{(j_1,\cdots j_k)}\binom{i}{k}\delta _{j_1}\cdots \delta _{j_k}(a)T^{j_1+\cdots +j_k -k+i}\right)\\
		&=\sum_{i\le n}(b_i a)T^i+\binom{n}{1}b_n\delta_{i_0}(a)T^{i_0-1+n}+ \quad terms\ of\ degree<i_0-1+n \\
		a\sum_{i\le n }b_i T^i&=\sum_{i\le n }(ab_i) T^i.
	\end{align*}
	As $	\sum_{i\le n}^{}b_i T^i \cdot a=a\cdot \sum_{i\le n}^{}b_i T^i$ and $b_n\ne 0$, $\binom{n}{1}\delta_{i_0}(a)=0$. As the characteristic of $L$ is 0, $\delta_{i_0}(a)=0 $, which is a contradiction. Thus
	$C(a,A)\subseteq L$, one has $C(a,A)=L$ as $L \subseteq C(a,A)$.
	
	By $T\cdot a=aT+\sum_{i\le 0}\delta_i(a)T^i$, \[C(T,L)=\bigcap_{i\le 0}\text{Ker}(\delta_i).\] So 
	$C(A)=\bigcap_{i\le 0}\text{Ker}(\delta_i)$.
	
	As $L((T^{-1};\Tilde{\delta}))$ is  a division ring, its center $C(A)$ must be a field.
	
	\ep
		Let
	$K=\cap_{i\le 0} \text{Ker}(\delta_i)$. Then $A$ is a $K$-algebra.
	
		\ble\label{f26}	Let $L((T^{-1};\Tilde{\delta}))$ be a deformed Laurent series ring with the standard degree map satisfying (\ref{e45}).
	Then for $i\le 0$, \be\label{e47} \delta_i(ab)= a\delta_i(b)+\delta_i(a)b+ \sum_{k> 0}\sum_{ (l,j_1,\cdots,j_k)
	}^{j_1+\cdots+j_k-k+l=i}\binom{l}{k}\delta_l(a)\delta_{j_1}\cdots \delta_{j_k}(b), a,b\in L. \ee (See (\ref{e43}) for the meaning of 
	$\sum_{ (l,j_1,\cdots,j_k)}^{j_1+\cdots+j_k-k+l=i}$.)
	
		Let  $i_0=max\{i\in \mathbb Z_{\le 0}| \delta_i\ne 0\}.$ Then $\delta_{i_0}$ is a $K$-linear derivation. 
	Each $\delta_i, i\le0,$ is a $K$-linear differential operator of $L$.

	\ele
	
	\bp
	One has 
	\[T\cdot(ab)=ab T +\sum_{i\le0}^{}\delta_i(ab)T^i, a,b\in L,\] and
	\begin{align*}(Ta)b=& (aT+\sum_{i\le 0}\delta_i(a)T^i)b\\
		=&a(Tb)+\sum_{i\le 0}\delta_i(a)(T^i b)\\
		=&a(bT+\sum_{i\le 0}\delta_i(b)T^i)+\sum_{i\le 0}\delta_i(a)\left [ bT^i+\sum_{k\ge1}\sum_{(j_1,\cdots,j_k)}\binom{i}{k}\delta_{j_1}\cdots \delta_{j_k}(b)T^{j_1+\cdots +j_k-k+i  }\right ]\\
		=&abT+\sum_{i\le 0} \left( a\delta_i(b)T^i+\delta_i(a)bT^i \right) +
		\sum_{l\le 0}\sum_{k\ge 1 }\sum _{(j_1,\cdots,j_k)}\binom{l}{k}\delta_l(a)\delta_{j_1}\cdots \delta_{j_k}(b)T^{j_1+\cdots+j_k-k+l}
	\end{align*}
	by $T(ab)=(Ta)b$, compare the coefficient of each $T^i$, one has
	\begin{align*}
		\delta_{i}(ab)=a\delta_i(b)+\delta_i(a)b+\sum_{k\ge 1}\sum_{(l,j_1,\cdots,j_k)}^{l+j_1+\cdots+j_k-k=i}\binom{l}{k}\delta_l(a)\delta_{j_1}\cdots \delta_{j_k}(b), a,b\in L, i\le0.
	\end{align*}
	It follows that for $a\in K, b\in L$, 
	$\delta_i(ab)=a\delta_i(b)$. Thus each $\delta_i$ is $K$-linear.
	
	 It is clear that
	$\delta_{i_0}$ is a $K$-linear derivation. By induction one can easily show that for those $i$ with $\delta_{i}\ne0$, $\delta_{i}$ is a (high order) $K$-linear differential operator on $L$.
	
	\ep
	
	As the characteristic of $L$ is 0, the characteristic of its center $K$ is also 0.
	
	
	All the tensor products below will be over $K$. 
	
	Let $B=End_{K-lin}(L)$ be the $K$-algebra of $K$-linear operators on $L$. Then  the tensor product $B\otimes B$ is also a $K$-algebra, which can be canonically identified with a subalgebra of $End_{K-lin}(L\otimes L)$.
	
	Let  \[\bold m:L\otimes L\rt L, a\otimes b\mapsto ab\] be the multiplication map. Then $\bold m$ is a homomorphism of rings, whose kernel is the ideal $I$ of $L\otimes L$ generated by all such elements $a\otimes 1-1\otimes a, a\in L$.
	
	 Let \[\Gamma=\{(\phi,\tau)\in (B\otimes B)\times B| \bold m\cdot \phi=\tau\cdot \bold m  \},\]
	i.e. $(\phi,\tau)\in \Gamma$ iff $(\phi,\tau)$ make the following diagram commutative.
	\begin{center}
		\begin{tikzcd}
			& L\otimes L \arrow[r,  "\bold m "]  \arrow[d, "\phi"] & L\arrow[d, "\tau"] \\
			& L\otimes L \arrow[r, "\bold m "] &L
		\end{tikzcd}
	\end{center}
	Let 	  \[C=\{\psi\in B\otimes B| \psi(I)\subseteq I \}.\] Then $C$ is a subalgebra of $B\otimes B$. It is clear that $(\phi,\tau)\in \Gamma$ iff $\phi\in C$. For  $\phi\in C$, it is clear that there exists a unique $\tau$ such that $(\phi,\tau)\in \Gamma$, and denote the $\tau$ by $\overline{\phi}$.
	
	The map $$\Psi:C\rt B, \phi\mapsto \overline{\phi},$$ is a ring homomorphism. It is clear that \[\text{Ker}(\Psi)=\{\phi\in B\otimes B| \phi(L\otimes L)\subseteq I \}.\] 
	Let $B_0=\Psi(C)$, which is a subalgebra of $B$.
The homomorphism $\Psi$ induces a ring isomorphism $C/\text{Ker}(\Psi)\rt B_0.$

	Let \[\Delta:B_0\rt C/\text{Ker}(\Psi)\] be the its inverse. Then
	for $\tau\in B_0$, $\Delta(\tau)=\phi+\text{Ker}(\Psi)$ iff
	$(\phi,\tau)\in \Gamma$.


By (\ref{f26}), one has for $i\le0$, \be\label{e13}\begin{aligned}  \Delta\delta_i&=1\otimes\delta_i+\sum_{k>0}[ \sum_{(l,j_1,\cdots,j_k)}^{j_1+\cdots+j_k+l=i+k}\binom{l}{k}\delta_l\otimes\delta_{j_1}\cdots \delta_{j_k}]+\delta_i\otimes 1+\text{Ker}\ \Psi,
\end{aligned}\ee

or, \begin{align}\label{e31}
	\Delta\delta_i=1\otimes\delta_i+\sum_{k\ge0}[ \sum_{(l,j_1,\cdots,j_k)}^{j_1+\cdots+j_k+l=i+k}\binom{l}{k}\delta_l\otimes\delta_{j_1}\cdots \delta_{j_k}]+\text{Ker}\ \Psi,
\end{align}
	where $\delta_i\otimes 1$ in (\ref{e13}) is combined into the summation (regarded as $k=0$). 
Another equivalent formulation of (\ref{e13}) is
\be\label{e34}\Delta\delta_i=1\otimes\delta_i+\sum_{i<l\le 0}\delta_l\otimes\sum_{k=1}^{l-i}\binom{l}{k}[ \sum_{(j_1,\cdots,j_k)}^{j_1+\cdots+j_k=i+k-l}\delta_{j_1}\cdots \delta_{j_k}]+\delta_i\otimes 1+\text{Ker}\ \Psi.
\ee Note that $1\otimes\delta_i+\sum_{i<l\le 0}\delta_l\otimes\sum_{k=1}^{l-i}\binom{l}{k}[ \sum_{(j_1,\cdots,j_k)}^{j_1+\cdots+j_k=i+k-l}\delta_{j_1}\cdots \delta_{j_k}]+\delta_i\otimes 1$ is always a finite sum.

In particular $\delta_i\in B_0 $ for all $i\le1$. 
	\begin{rem}
		For $i=0,1,2,3$, one has
		\begin{align*}
			&\Delta \delta_0=1\otimes \delta_0+\delta_0\otimes 1+\text{Ker}\ \Psi,\\
			&\Delta \delta _{-1}=1\otimes \delta_{-1}+\delta_{-1}\otimes 1+\binom{0}{1}\delta_0\otimes \delta_0=1\otimes \delta_{-1}+\delta_{-1}\otimes 1+\text{Ker}\ \Psi,\\
			&\Delta \delta_{-2}=1\otimes \delta_{-2}+\delta_{-2}\otimes 1+\binom{-1}{1}\delta_{-1}\otimes \delta_0+\text{Ker}\ \Psi,\\
			&\Delta \delta_{-3}=1\otimes \delta_{-3}+\delta_{-3}\otimes 1+\binom{-1}{1}\delta_{-1}\otimes \delta_{-1}+\binom{-2}{1}\delta_{-2}\otimes \delta_{0}+\binom{-1}{2}\delta_{-1}\otimes \delta_{0}^2+\text{Ker}\ \Psi.
		\end{align*}
		
	\end{rem}

		We will use the following notation in later proof. According to the expression (\ref{e13}), we write \[\delta_l\otimes\delta_{j_1}\cdots \delta_{j_k}\in \Delta\delta_i, if\  j_1+\cdots+j_k+l=i+k.\]

	Next we will show that if (\ref{e13}) holds then (\ref{e16}) defines an associative product on $L((T^{-1}))$.
	
By  (\ref{e16}) and (\ref{e13}), for $a,b,c\in L\setminus \{0\}, m,n,l\in \mathbb Z$,
	one has
	\[ (aT^m\cdot bT^n)\cdot cT^l=\sum_{u\ge 0,k\ge 0}\sum_{(j_1,\cdots,j_k;s_1,\cdots,s_u )}\binom{m}{k}\binom{\sum j_i-k+m+n}{u}a\delta_{j_1}\cdots \delta_{j_k}(b)\delta_{s_1}\cdots \delta_{s_u}(c) T^{m+n+l+\sum j_i+\sum s_v-(k+u)} ,\] 
	where $\sum j_i$ means $\sum_{i=1}^k j_i$, which equals 0 if $k=0$. Similarly $\sum s_v=\sum_{v=1}^u s_v$,  which equals 0 if $u=0$.  Such conventions will be used henceforth. 
	
	Similarly one has
	\[ aT^m(bT^n\cdot cT^l)=\sum_{u\ge 0,k\ge 0}\sum_{(j_1,\cdots,j_k;s_1,\cdots,s_u )}\binom{m}{k}\binom{n}{u}a\delta_{j_1}\cdots \delta_{j_k}(b\delta_{s_1}\cdots \delta_{s_u}(c) )T^{m+n+l+\sum j_i+\sum s_v-(k+u)}, \]

	So, $(aT^m\cdot bT^n)\cdot cT^l=aT^m(bT^n\cdot cT^l)$ if for any fixed $p\in \mathbb Z_{\le 0}$, 
	
	\be\label{e29}\sum\binom{m}{k}\binom{\sum j_i-k+m+n}{t}\delta_{j_1}\cdots \delta_{j_k}(b)\delta_{s_1}\cdots \delta_{s_t}(c)= \sum\binom{m}{u}\binom{n}{r}\delta_{l_1}\cdots \delta_{l_u}\left(b\delta_{q_1}\cdots \delta_{q_r}(c)\right),\ee
	where the summation on the left (resp. on the right) is over $\Omega_1(p)$ (resp.$\Omega_2(p)$), where \[\Omega_1(p)=\{(j_1,\cdots,j_k;s_1,\cdots,s_t )|k, t\ge0; j_i, s_v\le0; \sum j_i+\sum s_v-(k+t)=p\} ,\]	
	 \[\Omega_2(p)=\{(l_1,\cdots,l_u;q_1,\cdots,q_r )|u,r\ge0; l_i, q_v\le0; \sum l_i+\sum q_v-(u+r)=p\}.\]
	
	The	left hand side  of (\ref{e29}) is \[\sum\binom{m}{k}\binom{\sum j_i-k+m+n}{t}\bold m\cdot(\delta_{j_1}\cdots \delta_{j_k}\otimes\delta_{s_1}\cdots \delta_{s_t})(b\otimes c),\] 
	
	The	right hand side  of (\ref{e29}) is \[\sum\binom{m}{u}\binom{n}{r}\bold m\cdot\Delta(\delta_{l_1}\cdots \delta_{l_u})\cdot (1\otimes\delta_{q_1}\cdots \delta_{q_r})(b\otimes c).\] 
		
	Recall that $B=End_{K-lin}(L)$.		 Let \be\label{e37} L(p)=\sum\binom{m}{k}\binom{\sum j_i-k+m+n}{t}\delta_{j_1}\cdots \delta_{j_k}\otimes \delta_{s_1}\cdots \delta_{s_t}\in B\otimes_K B,\ee where the summation is over $\Omega_1(p)$, and  \be\label{e32} R(p)=\sum\binom{m}{u}\binom{n}{r}\Delta(\delta_{l_1}\cdots \delta_{l_u})\cdot (1\otimes\delta_{q_1}\cdots \delta_{q_r})\in B\otimes_K B,\ee  where the summation is over $\Omega_2(p)$.
	Then $(aT^m\cdot bT^n)\cdot cT^l=aT^m(bT^n\cdot cT^l)$ {if} $L(p)=R(p)$ for all fixed $p\in \mathbb Z_{\le 0}$ and (\ref{e16}) holds.

	
	\ble \label{e66}	Assume there are additive maps $\delta_i:L\rightarrow L, i\in \mathbb Z_{\le 0},$ which satisfy
 (\ref{e13}). Let $ m,n,l\in \mathbb Z$. Then $L(p)=R(p)$ for all $p\in \mathbb Z_{\le 0}$. Thus if (\ref{e16}) holds, then
 $aT^m(bT^n\cdot cT^l)=(aT^m\cdot bT^n)\cdot cT^l$  for $a,b,c\in L$.
	\ele

\bp As the 2nd statement follows form the 1st one, one only needs to prove the 1st  statement.

Fix $p\in \mathbb Z_{\le0}$. 

	Note that if (\ref{e13}) holds, then $\Delta(\delta_{l_1}\cdots \delta_{l_u})\cdot (1\otimes\delta_{q_1}\cdots \delta_{q_r})=\Delta(\delta_{l_1})\cdots \Delta(\delta_{l_u})\cdot (1\otimes\delta_{q_1}\cdots \delta_{q_r})$ in $R(p)$ can further be expanded as the sum of terms of the form $\lambda\delta_{j_1}\cdots \delta_{j_k}\otimes \delta_{s_1}\cdots \delta_{s_t}$ with $(j_1,\cdots,j_k;s_1,\cdots,s_t )\in \Omega_1$ (and $\lambda\in \mathbb Z$), which is in $L(p)$.
 So we only need to check that the coefficient of any fixed term in $L(p)$ equals the coefficient of the same  term in the expansion of  $R(p)$. There are 3 steps.

1. Let $k,t\ge 0$ and $j_1,\cdots,j_k,s_1,\cdots,s_t\in \mathbb Z_{\le0}$. Let  \[ \phi_0({j_1},\cdots, {j_k};{s_1},\cdots, {s_t})\]

be the sum of coefficients of $\delta_{j_1}\cdots \delta_{j_k}\otimes\delta_{s_1}\cdots \delta_{s_t}$ in the expansion of all the terms in $R(p)$ of the form $\binom{m}{u}\binom{n}{0}\Delta(\delta_{l_1}\cdots \delta_{l_u})\cdot(1\otimes 1)$ (the terms with $r=0$ in (\ref{e32})).


For example, if $t=0$ then $\phi_0({j_1},\cdots ,{j_k};\emptyset)=\binom{m}{k}$, if $k=0$ then $\phi_0(\emptyset; {s_1},\cdots ,{s_t})=\binom{m}{t}$.\\[2mm]

First we will show
\begin{align}\label{e30}
	\phi _0(j_1,\cdots ,j_k;s_1,\cdots,s_t)=\sum_{i=0}^{t} \binom{m}{k+i}\binom{k+i}{k}\binom{j_1+\cdots+j_k}{t-i}.
\end{align}
To avoid complicated subscripts, we illustrate it in the case $k=2,t=3$ below. The general case can be treated similarly.

We will compute the coefficient of $\delta_{j_1}\delta_{j_2}\otimes\delta_{s_1}\delta_{s_2}\delta_{s_3}$ in the expansion of  terms of the type $\binom{m}{u}\binom{n}{0}\Delta(\delta_{l_1}\cdots \delta_{l_u})\cdot (1\otimes 1)$, which equals $\binom{m}{u}\Delta(\delta_{l_1})\cdots \Delta(\delta_{l_u})$. One must have $2\le u\le 2+3$. We will compute the coefficient of $\delta_{j_1}\delta_{j_2}\otimes\delta_{s_1}\delta_{s_2}\delta_{s_3}$ in the expansion of those terms of the type $\Delta(\delta_{l_1})\cdots \Delta(\delta_{l_{2+i}})$ with $i=0,1,2,3$.

Assume $i=0$. By (\ref{e31}) one has
\begin{align*}
	\Delta\delta_{l_1}=\sum_{k\ge 0} \sum_{(u,p_1,\cdots,p_k)}^{p_1+\cdots+p_k+u=l_1+k}\binom{u}{k}\delta_u\otimes\delta_{p_1}\cdots \delta_{p_k}+1\otimes\delta_{l_1}+\text{Ker}\ \Psi, 
\end{align*}

\begin{align*}
	\Delta\delta_{l_2}=\sum_{r\ge 0} \sum_{(v,q_1,\cdots,q_r)}^{q_1+\cdots+q_r+v=l_2+r}\binom{v}{r}\delta_v\otimes\delta_{q_1}\cdots \delta_{q_r}+1\otimes\delta_{l_2}+\text{Ker}\ \Psi.
\end{align*}

If $\delta_{j_1}\delta_{j_2}\otimes\delta_{s_1}\delta_{s_2}\delta_{s_3}$ appears in the expansion of $\Delta\delta_{l_1}\cdot\Delta\delta_{l_2}$, then for some $u=0,1,2,3$, $\delta_{j_1}\otimes\delta_{s_1}\cdots\delta_{s_u}$ is in the expansion of $\Delta\delta_{l_1}$, and $\delta_{j_2}\otimes\delta_{s_{u+1}}\cdots\delta_{s_3}$ is in the expansion of $\Delta\delta_{l_2}$. Thus the coefficient of  $\delta_{j_1}\delta_{j_2}\otimes\delta_{s_1}\delta_{s_2}\delta_{s_3}$ in those $\binom{m}{2}\Delta\delta_{l_1}\cdot\Delta\delta_{l_2}$ is \be\label{e33}\binom{m}{2}\sum_{u=0}^3\binom{j_1}{u}\binom{j_2}{3-u}=\binom{m}{2} \binom{j_1+j_2}{3}.\ee 

Note that (\ref{e33}) is just the  summand of the right side of (\ref{e30}) with $i=0$ in the case $k=2,t=3$.

Assume $i=1$. Let $\alpha=\delta_{j_1}\delta_{j_2}\otimes\delta_{s_1}\delta_{s_2}\delta_{s_3}$. If $\alpha$ appears in the expansion of $\Delta\delta_{l_1}\cdot\Delta\delta_{l_2}\cdot\Delta\delta_{l_3}$, then there are 3 cases:

(a) $\alpha=(\delta_{j_1}\otimes \delta_{s_1}\cdots \delta_{s_{u}})\cdot (\delta_{j_2}\otimes \delta_{s_{u+1}}\cdots \delta_{s_2})\cdot (1\otimes\delta_{s_3}), u=0,1,2$, with \[\delta_{j_1}\otimes \delta_{s_1}\cdots \delta_{s_{u}}\in\Delta\delta_{l_1},\delta_{j_2}\otimes \delta_{s_{u+1}}\cdots \delta_{s_2}\in\Delta\delta_{l_2}, 1\otimes\delta_{s_3}\in\Delta\delta_{l_3};\] 

(b) $\alpha=(\delta_{j_1}\otimes \delta_{s_1}\cdots \delta_{s_{u}})\cdot (1 \otimes \delta_{s_{u+1}})\cdot (\delta_{j_2}\otimes \delta_{s_{u+2}}\cdots \delta_{s_3}),\ u=0,1,2$, with \[\delta_{j_1}\otimes \delta_{s_1}\cdots \delta_{s_{u}}\in\Delta\delta_{l_1}, 1\otimes\delta_{s_{u+1}}\in\Delta\delta_{l_2}, \delta_{j_2}\otimes \delta_{s_{u+2}}\cdots \delta_{s_3}\in\Delta\delta_{l_3};\] 

(c)  $\alpha=(1\otimes \delta_{s_1})\cdot (\delta_{j_1}\otimes \delta_{s_2}\cdots \delta_{s_{u}})\cdot (\delta_{j_2}\otimes \delta_{s_{u+1}}\cdots \delta_{s_3}), u=1,2,3$, with \[ 1\otimes\delta_{s_1}\in\Delta\delta_{l_1},\delta_{j_1}\otimes \delta_{s_2}\cdots \delta_{s_{u}}\in\Delta\delta_{l_2},\delta_{j_2}\otimes \delta_{s_{u+1}}\cdots \delta_{s_3}\in\Delta\delta_{l_3}.\]

Thus the sum of coefficient of  $\delta_{j_1}\delta_{j_2}\otimes\delta_{s_1}\delta_{s_2}\delta_{s_3}$ in  $\binom{m}{3}\Delta\delta_{l_1}\Delta\delta_{l_2}\Delta\delta_{l_3}$ is \[\binom{m}{3}\binom{3}{2}\sum_{u=0}^2\binom{j_1}{u}\binom{j_2}{3-1-u}=\binom{m}{2+1}\binom{2+1}{2} \binom{j_1+j_2}{3-1},\] which is just the  summand of the right side of (\ref{e30}) with $i=1$  in the case $k=2,t=3$. 

If $i=2,3$, by similar computations, one finds that the sum of coefficient of  $\delta_{j_1}\delta_{j_2}\otimes\delta_{s_1}\delta_{s_2}\delta_{s_3}$ in $\binom{m}{2+i}\Delta\delta_{l_1}\cdots\Delta\delta_{l_{2+i}}$ is $\binom{m}{2+i}\binom{2+i}{2}\binom{j_1+j_2}{3-i}.$ So (\ref{e30}) holds in the case $k=2,t=3$. 

In the general case,
$\delta_{j_1}\cdots \delta_{j_k}\otimes\delta_{s_1}\cdots \delta_{s_t} $ occurs in the expansion of  $\binom{m}{k+i}\Delta(\delta_{l_1})\cdots \Delta(\delta_{l_{k+i}})$ for $i=0,\cdots,t$. For some fixed $i=0,\cdots,t$,
there are $\binom{k+i}{k}$ cases to write $\delta_{j_1}\cdots \delta_{j_k}\otimes\delta_{s_1}\cdots \delta_{s_t}$ as a product of items of the form
$\delta_{j_u}\otimes\delta_{s_w}\cdots \delta_{s_{w+p}}$ ($u=1,\cdots,k$) and $1\otimes \delta_{s_q}$ (there are $i$ such  $1\otimes \delta_{s_q}$ ). In each case, there are also sub-cases as in above example, and the sum of coefficient of $\delta_{j_1}\cdots \delta_{j_k}\otimes\delta_{s_1}\cdots \delta_{s_t} $ in all the sub-cases (subordinate to a given case) equals
\[\sum_{w_1,\cdots,w_k\ge 0}^{w_1+\cdots w_k=t-i} \binom{j_1}{w_1}\cdots\binom{j_k}{w_k}=\binom{j_1+\cdots+j_k}{t-i}.\]

So for a fixed $i$, the sum of coefficient of $\delta_{j_1}\cdots \delta_{j_k}\otimes\delta_{s_1}\cdots \delta_{s_t} $ in the expansion of $\binom{m}{k+i}\Delta(\delta_{l_1})\cdots \Delta(\delta_{l_{k+i}})$
is $\binom{m}{k+i}\binom{k+i}{k}\binom{j_1+\cdots+j_k}{t-i}$,
thus (\ref{e30}) holds in the general case.

2. Let \[\phi({j_1},\cdots, {j_k};{s_1},\cdots, {s_t}),\ \ (k,t\ge 0)\]
be the sum of coefficient of $\delta_{j_1}\cdots \delta_{j_k}\otimes\delta_{s_1}\cdots \delta_{s_t}$ in the expansion of all the terms in $R(p)$ of the form $\binom{m}{u}\binom{n}{r}\Delta(\delta_{l_1}\cdots \delta_{l_u})\cdot (1\otimes\delta_{q_1}\cdots \delta_{q_r}) $ 
where  $ u\ge 0, r\ge 0, {(l_1,\cdots,l_u;q_1,\cdots,q_r )}$ satisfying $\sum l_i+\sum q_i-(u+r)=p$. see (\ref{e32}).

For example, if $k=0$ then $\phi(\emptyset; {s_1},\cdots, {s_t})=\binom{m+n}{t}$.

Now we illustrate the equality
\be\label{e35}\phi({j_1},\cdots, {j_k};{s_1},\cdots, {s_t})
= \sum_{i=0}^t \binom{n}{i}\phi_0({j_1},\cdots, {j_k};{s_1},\cdots, {s_{t-i}})\ee
in the case $t=2$:

$\delta_{j_1}\cdots \delta_{j_k}\otimes\delta_{s_1} \delta_{s_2}$ occurs in
$\binom{m}{u}\binom{n}{0}\Delta(\delta_{l_1}\cdots \delta_{l_u})\cdot (1\otimes1)$ with coefficient $ \binom{n}{0}\phi_0(j_1,\cdots ,j_k;s_1,s_2)$;

$\delta_{j_1}\cdots \delta_{j_k}\otimes\delta_{s_1} \delta_{s_2}=\delta_{j_1}\cdots \delta_{j_k}\otimes\delta_{s_1}\cdot 1\otimes \delta_{s_2}$ occurs in
$\binom{m}{u}\binom{n}{1}\Delta(\delta_{l_1}\cdots \delta_{l_u})\cdot (1\otimes \delta_{s_2})$ with coefficient $ \binom{n}{1}\phi_0(j_1,\cdots, j_k;s_1) $;

$\delta_{j_1}\cdots \delta_{j_k}\otimes\delta_{s_1} \delta_{s_2}=\delta_{j_1}\cdots \delta_{j_k}\otimes 1\cdot 1\otimes\delta_{s_1}\delta_{s_2}$ occurs in
$\binom{m}{u}\binom{n}{2}\Delta(\delta_{l_1}\cdots \delta_{l_u})\cdot (1\otimes \delta_{s_1}\delta_{s_2})$ with coefficient $ \binom{n}{2}\phi_0(j_1,\cdots, j_k;\emptyset)$.

These exhaust all the possibilities for  $\delta_{j_1}\cdots \delta_{j_k}\otimes\delta_{s_1} \delta_{s_2}$ to occur in the expansion of the terms in $R(p)$ of the form $\binom{m}{u}\binom{n}{r}\Delta(\delta_{l_1}\cdots \delta_{l_u})\cdot (1\otimes\delta_{q_1}\cdots \delta_{q_r}) $, thus (\ref{e35}) holds in the case $t=2$. It is easy to see that (\ref{e35}) holds in the general case .

3. Now we compute $\phi({j_1},\cdots, {j_k};{s_1},\cdots, {s_t})$.

Recall that 
\be\label{e27} \binom{u}{v+w}\binom{v+w}{w}=\binom{u}{v}\binom{u-v}{w}, v,w\in \mathbb Z_{\ge0}, u\in \mathbb Z. \ee
Let $J=j_1+\cdots+j_k$. Then
\be\begin{aligned}
	\phi({j_1},\cdots, {j_k};{s_1},\cdots, {s_t})&=\sum_{q=0}^t \phi_0({j_1},\cdots, {j_k};{s_1},\cdots, {s_{t-q}})\cdot \binom{n}{q}\\
	&=\sum_{q=0}^t[ \sum_{i=0}^{t-q} \binom{m}{k+i}\binom{k+i}{k}\binom{J}{t-q-i}] \binom{n}{q}\\
	&=\sum_{q=0}^t[ \sum_{i=0}^{t-q} \binom{m}{k}\binom{m-k}{i}\binom{J}{t-q-i}] \binom{n}{q}\ \ \ (by\ (\ref{e27}))\\
	&=\binom{m}{k}\sum_{q=0}^t[ \sum_{i=0}^{t-q}\binom{m-k}{i}\binom{J}{t-q-i}] \binom{n}{q}\\
	&=\binom{m}{k}\sum_{q=0}^t\binom{m-k+J}{t-q}\binom{n}{q}\\
	&=\binom{m}{k}\binom{m+n-k+J}{t}.
\end{aligned}\ee
Thus one has \[\phi({j_1},\cdots ,{j_k};{s_1},\cdots, {s_t})=\binom{m}{k}\binom{m+n-k+ (j_1+\cdots j_k)}{t}\]
and $L(p)=R(p)$ (see (\ref{e37}) and (\ref{e32})).
\ep
\bco \label{e67}	Assume there are additive maps $\delta_i:L\rightarrow L, i\in \mathbb Z_{\le 0},$ which satisfy
(\ref{e13}). Then there exists some unique product  $\Tilde{\delta}$  on  $L((T^{-1}))$ satisfying (\ref{e16}) such that 	$L((T^{-1};\Tilde{\delta}))$ is a deformed Laurent series ring with the standard degree map.
\eco
\bp
It has been show in last lemma that if (\ref{e13}) and (\ref{e16}) holds then for any $a,b,c\in L, m,n,l\in \mathbb Z$,
one has
$(aT^m\cdot bT^n)\cdot cT^l=aT^m\cdot (bT^n\cdot cT^l)$. 

Extending the product 
(\ref{e16}) to $L((T^{-1}))$ by distributive law and continuity.
Then for $z_1,z_2,z_3\in L((T^{-1}))$ with $z_1=\sum_{i\le m} a_i T^i, z_2=\sum_{j\le n } b_j T^j, z_3=\sum_{k\le l} c_k T^k$, one has 
\[(z_1z_2)z_3=\sum_{i\le m,j\le n,k\le l}(a_i T^i\cdot  b_j T^j) c_k T^k=\sum_{i\le m,j\le n,k\le l}a_i T^i ( b_j T^j\cdot c_k T^k)=z_1(z_2z_3).\]
(One has $(z_1z_2)z_3=z_1(z_2z_3)=0$ if one of $z_1,z_2,z_3$ is 0.)

Then it is easy to see that $L((T^{-1}))$ with this product satisfies all the conditions in Definition \ref{e39}, thus is a deformed Laurent series ring with the standard degree map. The uniqueness of the product is clear.

\ep
To sum up, here is the main result of this section.
	
	
	
	
\begin{theorem}\label{f22} Let $L$ be a field of characteristic 0.	Let $L((T^{-1}))$  have  the standard degree map and the usual addition.
	Assume there are additive maps $\delta_i:L\rightarrow L, i\in \mathbb Z_{\le 0}.$ 	
	Then the following are equivalent.
	
	(1) There exists some unique multiplication  $\Tilde{\delta}$  on  $L((T^{-1}))$ such that 	$L((T^{-1};\Tilde{\delta}))$ is a deformed Laurent series ring with the standard degree map, satisfying
	\be\label{f25} T\cdot a=aT+\sum_{i=0}^{-\infty}\delta_i(a)T^i, a\in L,\ee
	 
	 (2) For all $ i\in \mathbb Z_{\le 0} $,	\be\label{e41}  \delta_i(ab)= a\delta_i(b)+\delta_i(a)b+ \sum_{k> 0}\sum_{ (l,j_1,\cdots,j_k)
	}^{j_1+\cdots+j_k-k+l=i}\binom{l}{k}\delta_l(a)\delta_{j_1}\cdots \delta_{j_k}(b), a,b\in L
	.\ee

Assume (1) or (2) holds, then the formula for the multiplication  $\Tilde{\delta}$ is
	\[(\sum_{i\le m} a_i T^i)(\sum_{j\le n } b_j T^j)=\sum_{i\le m,j\le n} a_i T^i\cdot b_j T^j, where\]	
	\be\label{f24} a_iT^i\cdot b_jT^j=a_ib_jT^{i+j}+\sum_{k\ge 1}\sum_{ (j_1,\cdots,j_k )}\binom{i}{k}a_i\cdot\delta_{j_1}\cdots \delta_{j_k}(b_j)T^{j_1+\cdots+j_k-k+i+j}. 
	\ee
	
Moreover,	$L((T^{-1};\Tilde{\delta}))$ is a topological  division ring  which is  complete with respect to the topology induced by the standard degree map, and the its center  is the field  $K=\cap_{i\le 0} \text{Ker}(\delta_i)$.
	
\end{theorem}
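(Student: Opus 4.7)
The plan is to assemble the preceding lemmas and corollary; almost all of the technical work has already been carried out, and the theorem is mainly a statement of what has been proved. The two directions of the equivalence, the uniqueness and product formula, and the final three assertions (complete topological division ring, formula (\ref{f24}), and description of the center) each correspond to a specific earlier result.

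For the implication $(1)\Rightarrow(2)$, I would observe that (\ref{f25}) is exactly the hypothesis (\ref{e45}) of Lemma \ref{f26} (the coefficient of $T^1$ on the right-hand side is $a$, so implicitly $\delta_1=1$, matching the standing assumption). Lemma \ref{f26} then immediately delivers (\ref{e47}), which is (\ref{e41}).

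For $(2)\Rightarrow(1)$, I would first reinterpret (\ref{e41}) as the statement (\ref{e13}) about $\Delta\delta_i$ in $C/\text{Ker}\,\Psi$: applying $\bold m$ to the proposed right-hand side of (\ref{e13}), and using the definition of $\Delta$ together with $\bold m(a\otimes b)=ab$, reproduces (\ref{e41}) after identifying each tensor $\delta_l\otimes\delta_{j_1}\cdots\delta_{j_k}$ with the corresponding summand. With (\ref{e13}) in hand, Corollary \ref{e67} produces a deformed Laurent series ring structure $\tilde{\delta}$ on $L((T^{-1}))$ whose multiplication on monomials is (\ref{e16}); specializing (\ref{e16}) with $m=1$, $n=0$, $a=1$, $b=a$ recovers (\ref{f25}). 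Uniqueness is forced because Lemma \ref{e68} shows that any product satisfying (\ref{f25}) together with axioms (1)--(4) of Def \ref{e39} must obey (\ref{e16}) on monomials, and then distributivity plus continuity (axioms (1) and (2) of Def \ref{e39}) determine the product on all of $L((T^{-1}))$, giving the formula (\ref{f24}).

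For the remaining assertions, the fact that $L((T^{-1};\tilde{\delta}))$ is a complete topological division ring is the content of the lemma immediately following Def \ref{e39}, and the statement about the center is Lemma \ref{e70}, whose hypotheses are already in force. The only potentially fiddly step is the identification of (\ref{e41}) with (\ref{e13}), since the associativity argument and Corollary \ref{e67} are phrased in the $\Delta$-formulation rather than the coefficient-by-coefficient formulation of the theorem; but this is pure bookkeeping, not a genuine obstacle. The real combinatorial content of the theorem sits inside Lemma \ref{e66} (the identity $\phi({j_1},\cdots,{j_k};{s_1},\cdots,{s_t})=\binom{m}{k}\binom{m+n-k+j_1+\cdots+j_k}{t}$) and has already been paid for upstream.
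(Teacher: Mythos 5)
Your proposal is correct and follows essentially the same route as the paper: the paper's own proof is exactly the assembly you describe, citing Lemma \ref{f26} for $(1)\Rightarrow(2)$, Lemma \ref{e66} and Corollary \ref{e67} for $(2)\Rightarrow(1)$, and Lemma \ref{e68} for the product formula, with the completeness and center statements covered by the earlier lemmas you name. Your extra remark that the passage from (\ref{e41}) to the $\Delta$-formulation (\ref{e13}) is bookkeeping matches how the paper itself derives (\ref{e13}) in the text preceding Lemma \ref{e66}.
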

\bp
(1) implies (2) by Lemma \ref{f26}. (2) implies (1) by Lemma \ref{e66} and Corollary \ref{e67}. The formula (\ref{f24}) is proven in Lemma \ref{e68}.
\ep

It is clear by  (\ref{f24}) that the multiplication $\Tilde{\delta}$ in $L((T^{-1};\Tilde{\delta}))$ is determined by the set of $\delta_i,i\le0$ in (\ref{f25}). So one can write $\Tilde{\delta}=(\delta_i)_{i\le0}.$

By (\ref{f24}), the associated graded algebra of $L((T^{-1};\Tilde{\delta}))$ with respect to the standard degree map $deg$ is just the usual field  $L((T^{-1}))$ of Laurent series.

The proof of the following result is direct and is omitted.
\ble
Let $L((T^{-1};\widetilde{\delta}))$ be a deformed Laurent series ring and $\widetilde{\delta}=(\delta_i)_{i\le0}$. Assume that $w:L\rt \mathbb R_{-\infty}$ is a degree map such that $w(\delta_i(a))\le w(a) $ for all $a\in L$ and all $i\le0$.  Then the set of elements $\sum_{i\le n} a_i T^i$ in  $L((T^{-1};\widetilde{\delta}))$, such that $w(a_i)$ is a bounded function of $i$, forms a subring of  $L((T^{-1};\widetilde{\delta}))$.
\ele
We will denote this ring by $L((T^{-1};\widetilde{\delta}))^b$.\\[1mm]

Now we convert Theorem (\ref{f22}) to right $L$-vector space of  Laurent seires.

Let	$L_\textbf{r}((T^{-1}))=\{\sum_{i\le k}T^ia_i|a_i\in L, k\in \mathbb Z\}$, the right $L$-vector space of  Laurent seires. One has the map
\be\label{f20}\phi:L((T^{-1})) \rt L_\textbf{r}((T^{-1})),\ \ \sum_{i\le k}a_iT^i \mapsto \sum_{i\le k}T^ia_i.\ee
The map
\be\label{f21}deg_r:L_\textbf{r}((T^{-1})) \rt \mathbb{Z_{-\infty}},\ \ \sum_{i\le k}T^ia_i \mapsto k\ (a_k\ne 0), 0\mapsto -\infty,\ee
is called the standard degree map on $L_\textbf{r}((T^{-1}))$. Note that $deg_r= deg\cdot\phi$.

Analogous to Def. \ref{e39}, if there exists a multiplication $\widetilde{\eta}$ on $L_\textbf{r}((T^{-1}))$ such that the corresponding axioms 1-4 with the obvious modification (for example, in Axiom 4, the degree map  $deg$ is replaced by  $deg_r$) are satisfied, then $L_\textbf{r}((T^{-1}))$ is called a right  deformed Laurent series ring (with the standard degree map), and denoted by $L_\textbf{r}((T^{-1};\widetilde{\eta}))$.

Assume that $L_\textbf{r}((T^{-1};\Tilde{\eta}))$ is a right deformed Laurent series ring. Then there exist maps $\eta_i:L\rightarrow L, i\in \mathbb Z_{\le1},$ such that \be a\cdot T=T\eta_1(a)+\sum_{i=0}^{-\infty}T^i\eta_i(a), a\in L. \notag\ee
Then $\eta_i$ preserve addition for all $i$, and $\eta_1:L\rightarrow L$ is an injective ring homomorphism.

We will always assume $\eta_1=1$. As $L_\textbf{r}((T^{-1};\Tilde{\eta}))$ is associative, one has $(ab)T=a(bT)$, from which one can get following equations (as in Lemma \ref{f26}):
	\be\label{f19}\eta_i(ab)= a\eta_i(b)+\eta_i(a)b+ \sum_{k> 0}\sum_{ (l,j_1,\cdots,j_k)
}^{j_1+\cdots+j_k-k+l=i}\binom{l}{k}\eta_{j_1}\cdots \eta_{j_k}(a)\eta_l(b), a,b\in L, i\le0.\ee
Now we have the following result analogous to Theorem \ref{f22}.
\begin{coro}\label{f30} Let $L$ be a field of characteristic 0. Let $L_\textbf{r}((T^{-1}))$  have  the standard degree map $deg_r$ and the usual addition.
	Assume there are additive maps $\eta_i:L\rightarrow L, i\in \mathbb Z_{\le 0}.$ 	
	Then the following are equivalent.
	
	(1) There exists some unique multiplication  $\Tilde{\eta}$ on  $L_\textbf{r}((T^{-1}))$ such that 
	\be\label{f23} a\cdot T=Ta+\sum_{i=0}^{-\infty}T^i\eta_i(a), a\in L. \ee
	
	(2)
	 (\ref{f19}) holds for all $i\le 0$ and $a,b\in L$.

	Assume (\ref{f19}) holds, then  one has
		\[(\sum_{i\le m}  T^ia_i)(\sum_{j\le n } T^jb_j )=\sum_{i\le m,j\le n} T^i a_i\cdot  T^jb_j, where\]	
\[	T^ia_i\cdot T^jb_j=T^{i+j}a_ib_j+\sum_{k>0}\sum_{(j_1,\cdots,j_k)}\binom{j}{k}T^{j_1+\cdots+j_k-k+i+j}\eta_{j_1}\cdots \eta_{j_k}(a_i)b_j.\]
	Moreover,
	$L_\textbf{r}((T^{-1};\Tilde{\eta}))$ is a topological division ring  which is  complete with respect to the topology induced by the degree map $deg_r$, and the center of  $L_\textbf{r}((T^{-1};\Tilde{\eta}))$  is the field  $K=\cap_{i\le 0} \text{Ker}(\eta_i)$.
	
\end{coro}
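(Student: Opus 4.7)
The plan is to reduce Corollary~\ref{f30} to Theorem~\ref{f22} via the symmetry afforded by the bijection $\phi$ and the commutativity of $L$, so that the long combinatorial calculations of Lemma~\ref{e66} need not be repeated. First I would observe that the conditions (\ref{e41}) on a family $(\delta_i)_{i\le 0}$ and (\ref{f19}) on a family $(\eta_i)_{i\le 0}$ encode the same data. Setting $\delta_i := \eta_i$, I rename $a\leftrightarrow b$ in (\ref{e41}); because $L$ is commutative, the scalar factors $\delta_l(b)$ and $\delta_{j_1}\cdots\delta_{j_k}(a)$ on the right-hand side commute, and the result is exactly (\ref{f19}). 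Thus condition (2) of the corollary is precisely condition (2) of Theorem~\ref{f22} for the same underlying additive maps.

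Next, I would apply Theorem~\ref{f22} to obtain the left deformed Laurent series ring $L((T^{-1};\Tilde{\delta}))$ and transport its structure to $L_{\textbf{r}}((T^{-1}))$ by declaring $\phi$ to be a ring anti-isomorphism. Concretely, define a product $\Tilde{\eta}$ on $L_{\textbf{r}}((T^{-1}))$ by
\[\phi(f)\cdot_{\Tilde{\eta}}\phi(g):=\phi(g\cdot_{\Tilde{\delta}} f).\]
Associativity, distributivity, continuity, the degree-multiplicativity axiom in Def.~\ref{e39} (with $deg_r=deg\circ\phi$), the existence of multiplicative inverses for nonzero elements, and completeness all transfer through $\phi$ because $\phi$ is a degree-preserving homeomorphism that converts left multiplication by $L$ on $L((T^{-1}))$ into right multiplication by $L$ on $L_{\textbf{r}}((T^{-1}))$. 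To check the defining relation (\ref{f23}), I compute
\[ a\cdot_{\Tilde{\eta}} T = \phi(T\cdot_{\Tilde{\delta}} a) = \phi\Bigl(aT+\sum_{i\le 0}\delta_i(a)T^i\Bigr) = Ta+\sum_{i\le 0}T^i\eta_i(a).\]

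The explicit product formula for $T^ia_i\cdot_{\Tilde{\eta}} T^jb_j$ is then obtained by applying $\phi$ to formula (\ref{f24}) for $b_jT^j\cdot_{\Tilde{\delta}} a_iT^i$ and commuting scalars in $L$; uniqueness of $\Tilde{\eta}$ follows exactly as in Corollary~\ref{e67}, since it is pinned down by (\ref{f23}) together with distributivity and continuity. For the center, since the center of any ring equals the center of its opposite, and $\phi$ identifies $L_{\textbf{r}}((T^{-1};\Tilde{\eta}))$ with $L((T^{-1};\Tilde{\delta}))^{op}$, Lemma~\ref{e70} yields $C(L_{\textbf{r}}((T^{-1};\Tilde{\eta}))) = \cap_{i\le 0}\text{Ker}(\delta_i)=\cap_{i\le 0}\text{Ker}(\eta_i)$. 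The main obstacle is the bookkeeping in the first step: one must verify carefully that the apparent asymmetry between $\delta_l(a)\delta_{j_1}\cdots\delta_{j_k}(b)$ in (\ref{e41}) and $\eta_{j_1}\cdots\eta_{j_k}(a)\eta_l(b)$ in (\ref{f19}) is resolved exactly by the simultaneous use of the variable swap and commutativity of $L$; once that identification is made, the corollary is a formal consequence of Theorem~\ref{f22} transferred through $\phi$.
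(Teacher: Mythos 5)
Your proposal is correct and follows essentially the same route as the paper: the paper also proves Corollary \ref{f30} by using the anti-isomorphism $\phi$ between $L_\textbf{r}((T^{-1};\Tilde{\eta}))$ and $L((T^{-1};\Tilde{\delta}))$ with $\delta_i=\eta_i$ and transferring everything from Theorem \ref{f22}. Your explicit verification that (\ref{e41}) and (\ref{f19}) coincide under the swap $a\leftrightarrow b$ together with commutativity of $L$ is a detail the paper leaves implicit, but it is the same argument.
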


The sketch of the proof is as follows. Given $L_\textbf{r}((T^{-1};\Tilde{\eta}))$,  there exists some $L((T^{-1};\Tilde{\delta}))$ such that the map  $\phi:	L_\textbf{r}((T^{-1};\Tilde{\eta}))\rt L((T^{-1};\Tilde{\delta})),  \sum_{i\le k}T^i a_i\mapsto \sum_{i\le k}a_iT^i $ is an  anti-isomorphism of rings, i.e. $\phi$ is a bijection preserving addition and $\phi(f\cdot g)=\phi(g)\phi(f)$.

Applying $\phi$ on both sides of (\ref{f23}) one has \[Ta=aT+\sum_{i=0}^{-\infty}\eta_i(a)T^i, a\in L . \] So, $\delta_i=\eta_i$ for all $i\le0.$  The above corollary can be proven using this anti-isomorphism and Theorem \ref{f22}.

\section{Construction of $\widetilde{D}_{r,s}$  and $\check{D}_{r,s}$ and embedding of $\widehat{D}_{r,s}$}

 \setcounter{equation}{0}\setcounter{theorem}{0}
 
 Let $K$ be a field of characteristic 0.

Assume that $(r,s )\in \mathbb Z^2$ with $s \ne 0$, where $r,s$ are not required to be coprime. Let $\nu =r +s$. Assume $\nu>0$.

Let $\alpha_{r,s}, T_{r,s}$ be two indeterminants associated with $(r,s)$, and denote $\alpha=\alpha_{r,s}, T=T_{r,s}$ for simplicity.

For $k\ge 0$, let \[d_k=[1,\nu]_k/s^k=(\frac{\nu}{s})^k[\nu^{-1}]_k.\] In particular,
$d_0=1,d_1=1/s$.

Let $L=K(\alpha)$, and $\delta _i\in End_{K-lin}(L)$ be defined as follows.  
\begin{equation}\label{e64}
	\delta _i=
	\left\{
	\begin{array}{ll}
		\frac{d_k}{k!} (\frac{d}{d\alpha})^k, \ \ & i=1-k\nu \text{ with } k\ge 0;\\
		0, & i\le 1 \text{ and } i\not\equiv 1\ (mod\ \nu).
	\end{array}
	\right.
\end{equation}
Note that $\delta_1=1$ is the identity map, and the set of linear operators $\delta _{1-k\nu}, k=0,1,2,\cdots,$ are linearly independent, and commute with each other.
\par
\noindent
\begin{prop}   \label{fp1} 
	 The set of linear operators $\delta_i,i\in \mathbb Z_{\le0}$, satisfy (\ref{e41}), so there exists 
	 a multiplication  $\Tilde{\delta} $ on  $L((T^{-1}))$ such that
	\be\label{f33}T\cdot f=f\cdot T+\sum_{k=1}^{\infty } \delta _{1-k\nu }(f)\cdot T^{1-k\nu }
	=f\cdot T+s^{-1}f'T^{1-\nu}+\frac{1}{2!}\frac{1-\nu}{s^2}f^{(2)}T^{1-2\nu}+\cdots, f\in L,\ee and $L((T^{-1};\Tilde{\delta}))$ is a deformed Laurent series ring. The multiplication formula in this ring   is as follows:
	\[\sum_{i\le m} a_i T^i\cdot \sum_{j\le n}b_j T^j= \sum_{i\le m,j\le n} a_i T^i\cdot b_j T^j,\  where\] \begin{align*} &a_i T^i\cdot b_j T^j=a_i( T^i\cdot b_j )T^j=a_i(\sum_{k\ge 0}\lambda_{i}^k b_j^{(k)}T^{i-k\nu} )T^j=\sum_{k\ge 0}\lambda_{i}^k a_i b_j^{(k)}T^{i-k\nu+j}, and \end{align*}
	\be\label{f15} \lambda_{i}^k=(\frac{\nu}{s} )^k \sum_{l=1}^k(-1)^{k-l}\binom{i}{l}\binom{i-l-1}{k-l}\binom{l\nu^{-1}}{k}\ for\ k>0; \lambda_{i}^0=1.\ee
	In particular, if $(r,s)=(0,1)$ then $\lambda_{i}^k=\binom{i}{k}$.
\end{prop}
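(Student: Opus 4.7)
The plan is to invoke Theorem \ref{f22}: once the compatibility (\ref{e41}) is verified for the operators $\delta_i$ defined in (\ref{e64}), the existence of the multiplication $\tilde{\delta}$ satisfying (\ref{f33}) and the deformed Laurent series ring structure follow at once, so the remaining task is to extract the explicit formula (\ref{f15}) from (\ref{e21}).

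To verify (\ref{e41}), note first that $\delta_i = 0$ unless $i \equiv 1 \pmod{\nu}$. Both sides of (\ref{e41}) vanish when $i \not\equiv 1 \pmod{\nu}$, since a nonzero summand on the right forces $l = 1-c\nu$ and $j_p = 1-b_p\nu$ with $c, b_p\ge 1$ (because $l, j_p\le 0$), so $j_1+\cdots+j_k-k+l = 1-\nu(c+\sum b_p) = i$ requires $i\equiv 1\pmod{\nu}$. For $i = 1-n\nu$ with $n\ge 1$, apply Leibniz to $\delta_i(fg) = \frac{d_n}{n!}(fg)^{(n)} = \frac{d_n}{n!}\sum_{c+m=n}\binom{n}{c}f^{(c)}g^{(m)}$; the extremes $(c,m)=(0,n)$ and $(n,0)$ are absorbed into $f\delta_i(g)$ and $\delta_i(f)g$, while for $1\le c\le n-1$ the equality of coefficients of $f^{(c)}g^{(m)}$ reduces to
\[
\sum_{k\ge 1}\binom{1-c\nu}{k}\sum_{b_1+\cdots+b_k=m,\, b_p\ge 1}\prod_{p=1}^k \frac{d_{b_p}}{b_p!} \;=\; \frac{d_{c+m}}{d_c\, m!}.
\]
The decisive step is to introduce the generating function $H(x) := \sum_{b\ge 0}\frac{d_b}{b!}x^b$. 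Since $\frac{d_b}{b!} = (\nu/s)^b\binom{\nu^{-1}}{b}$, Newton's binomial series yields $H(x) = (1+\nu x/s)^{1/\nu}$, whence the LHS above equals $[x^m](H(x)^{1-c\nu}-1) = (\nu/s)^m\binom{\nu^{-1}-c}{m}$. Rewriting $d_{c+m}/d_c = [1-c\nu,\nu]_m/s^m = (\nu/s)^m\, m!\binom{\nu^{-1}-c}{m}$ shows the RHS agrees, completing the verification.

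For the explicit formula (\ref{f15}), substitute $\delta_{j_p} = \frac{d_{a_p}}{a_p!}(\frac{d}{d\alpha})^{a_p}$ (with $j_p = 1-a_p\nu$, $a_p\ge 1$) into (\ref{e21}), and group the resulting terms by total derivative order $M = \sum_p a_p$, giving
\[
\lambda_i^M \;=\; \sum_{k=1}^M \binom{i}{k}\,[x^M]\bigl(H(x)-1\bigr)^k.
\]
Expanding $(H-1)^k = \sum_{l=0}^k(-1)^{k-l}\binom{k}{l}H^l$, using $[x^M]H(x)^l = (\nu/s)^M\binom{l/\nu}{M}$, and interchanging the summations via $\binom{i}{k}\binom{k}{l} = \binom{i}{l}\binom{i-l}{k-l}$ reduces the inner sum over $k$ to $\sum_{j=0}^{M-l}(-1)^j\binom{i-l}{j}$, which equals $(-1)^{M-l}\binom{i-l-1}{M-l}$ by the standard alternating partial-sum identity. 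Assembling yields (\ref{f15}); the specialization $(r,s)=(0,1)$ forces $\nu=1$, so $\binom{l/\nu}{M}=\binom{l}{M}$ vanishes unless $l=M$, leaving $\lambda_i^k=\binom{i}{k}$. The principal technical ingredient throughout is the identification $H(x) = (1+\nu x/s)^{1/\nu}$, which reduces both the compatibility check and the closed-form computation to elementary binomial-series manipulations rather than any deeper obstacle.
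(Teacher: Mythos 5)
Your proposal is correct, and while it follows the same skeleton as the paper (verify (\ref{e41}) for the operators (\ref{e64}), invoke Theorem \ref{f22}, then read off (\ref{f15}) from (\ref{e21})), the combinatorial core is handled by a genuinely different device. The paper reduces the verification of (\ref{e41}) to the identity (\ref{f4}) and proves it through three auxiliary lemmas: Lemma \ref{fl2} (an inductive inclusion--exclusion computation of $\varphi(i,l)$), Lemma \ref{fl3} (a Vandermonde-type alternating sum), and Lemma \ref{fl1} (a polynomial interpolation identity); the derivation of (\ref{f15}) then reuses Lemma \ref{fl2} together with further binomial manipulations. You replace all of this with the single observation that $H(x)=\sum_{b\ge 0}\tfrac{d_b}{b!}x^b=(1+\nu x/s)^{1/\nu}$ as a formal power series over $\mathbb{Q}$, so that the left-hand side of (\ref{f4}) becomes $[x^m]\bigl(H(x)^{1-c\nu}-1\bigr)=(\nu/s)^m\binom{\nu^{-1}-c}{m}$, which matches $d_{c+m}/(d_c\,m!)$ directly; the same generating function then yields $\lambda_i^M=\sum_{k}\binom{i}{k}[x^M](H-1)^k$ and, after expanding $(H-1)^k$ by the binomial theorem and applying the alternating-sum identity, the closed form (\ref{f15}). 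I checked the key computations ($d_b/b!=(\nu/s)^b\binom{\nu^{-1}}{b}$, $d_{c+m}/d_c=(\nu/s)^m\,m!\binom{\nu^{-1}-c}{m}$, the reindexing $\binom{i}{k}\binom{k}{l}=\binom{i}{l}\binom{i-l}{k-l}$, and the vanishing of the $l=0$ terms) and they are all sound; the formal calculus of fractional powers of series with constant term $1$ is legitimate in characteristic $0$. What your approach buys is a substantial shortening and a unification of the two halves of the proof under one identity; what the paper's approach buys is complete self-containedness, proving each binomial identity from scratch without appealing to the formal binomial series for non-integer exponents.
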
 

Denote this ring by $\widetilde{D}_{r,s}$, and also by  $K(\alpha)((T^{-1};\Tilde{\delta}_{r,s}))$.

It follows from (\ref{f33}) and (\ref{e21}) that
	for $f\in K(\alpha)$ and $n\in \mathbb Z$, one has \be\label{g2}[T^n,f]=\binom{n}{1}\sum_{k>0}\frac{d_k}{k!}(\frac{d}{d\alpha})^k f\cdot  T^{n-k\nu}+ \binom{n}{2}\sum_{k_1,k_2>0}\frac{d_{k_1}d_{k_2}}{k_1!k_2!}(\frac{d}{d\alpha})^{k_1+k_2} f\cdot T^{n-(k_1+k_2)\nu}+ \cdots .   \ee In particular, 
	\be\label{g1}[T^n,\alpha]=ns^{-1}T^{n-\nu}.\ee

We need some preparation before we prove Proposition \ref{fp1}.
\ble\label{fl3}
One has for $a\in \mathbb{R}, m,t\ge 0$,
\begin{align}
	\sum_{j=0}^{m}(-1)^j\binom{a}{j}\binom{a-j}{t}=(-1)^{m}\binom{a}{t}\binom{a-t-1}{m}.  \label{f9}
\end{align}
\ele
\bp Assume that $a\in \mathbb Z_{\ge m+t}$. Then
\begin{align*}
	&\ \sum_{j=0}^{m}(-1)^j\binom{a}{j}\binom{a-j}{t}=\sum_{j=0}^{m}(-1)^j \binom{a}{a-j}\binom{a-j}{t}  \\
	&=\sum_{j=0}^{m}(-1)^j\binom{a}{t}\binom{a-t}{a-j-t} =\binom{a}{t}\sum_{j=0}^{m}(-1)^j \binom{a-t}{j}   \\
	&=(-1)^{m}\binom{a}{t}\binom{a-t-1}{m}.
\end{align*}

Thus (\ref{f9}), as a polynomial in $a$, holds for any $a\in \mathbb Z_{\ge m+t}$. So it holds for any $a\in \mathbb R$.
\ep

\ble\label{fl1}
For any $l\ge 0$ and  $p(z)\in K\left [ z \right ]$ with $deg \ p\le l$, one has
\begin{align}
	\sum_{t=0}^{l}(-1)^{l-t}\binom{z}{t}\binom{z-1-t}{l-t}p(t)=p(z). \label{f11}
\end{align}
\ele
\bp Fix some $z \in \left \{ 0,1,\cdots,l \right \}$.

As $z-1-t<l-t$, if $\binom{z-1-t}{l-t}\ne 0$ then $z-1-t<0$, i.e., $t>z-1$. If $\binom{z}{t}\ne0$ then $ t\le z$.

Then  $(-1)^{l-t}\binom{z}{t}\binom{z-1-t}{l-t}p(t)\ne 0$ only \  when \ $t=z,$ so the left hand side  of (\ref{f11}) equals $(-1)^{l-z}\binom{z}{z}\binom{z-1-z}{l-z}p(z)=p(z)$.

So (\ref{f11}) holds for $z=0,1,\cdots ,l$, then the lemma holds as $deg \ p\le l$.
\ep

Recall that
\be  \label{e60}
\binom{a}{n}\binom{n}{k}=\binom{a}{k}\binom{a-k}{n-k}, n,k\in\mathbb Z, n\ge k\ge0;a\in \mathbb R,
\ee and
\be  \label{e61}
\sum_{j=0}^{n}(-1)^j\binom{a}{j}=(-1)^n \binom{a-1}{n}, n\ge0, a\in \mathbb R.
\ee
For $(i,m)\in \mathbb Z^2$ with $m\ge i\ge1$,  let  
\[T(i,m)=\left \{ (j_1,\cdots, j_i)\in (\mathbb Z_{\ge1})^i |\sum _{t=1}^{i}j_t=m \right \}.\]
\ble\label{fl2}
For  $(i,l)\in \mathbb Z^2$ with $1 \le i \le l$,\quad let
$$\varphi (i,l)=\sum_{\left ( j_1,\cdots ,j_i \right )\in T(i,l) }\binom{\nu^{-1}}{j_1}\cdots \binom{\nu^{-1}}{j_i}.   $$
Then
\begin{align}
	\varphi (i,l)=\sum_{j=0}^{i-1}(-1)^j \binom{i}{j}\binom{(i-j)\nu^{-1}}{l}. \label{f5}
\end{align}
\ele
\bp   We prove it by induction on $i$.

For $i=1,\quad \varphi(1,l)=\binom{\nu^{-1}}{l}$\quad for any $l \ge 1,$ then (\ref{f5}) holds.

For $m,i\ge1$, one has 
\begin{align}
	\sum _{t=0}^{m-1}(-1)^{t}\binom{i+1}{m-t}\binom{i+1-(m-t)}{t}  
	&=\sum_{t=0}^{m-1}(-1)^t \binom{i+1}{m}\binom{m}{m-t} \ \notag\ \  (by\ \ref{e60})	\\
	&=\binom{i+1}{m}\sum_{t=0}^{m-1}(-1)^t \binom{m}{t}\notag	\\
	&=(-1)^{m-1}\binom{i+1}{m}.  \ \ \ \ (by\  \ref{e61}) \label{e62}
\end{align}
Assume that $l>i\ge 1$,and (\ref{f5}) holds for all $\varphi(j,l)$ with $1\le j\le i.$ We will show that  $\varphi(i+1,l)$ satisfies (\ref{f5}).

Let $S=\{( j_1,\cdots ,j_{i+1})\in\mathbb Z_{\ge0}| j_1+\cdots +j_{i+1}=l \}.$ For $k=0,1,\cdots,i$, let
\[S_k=\{( j_1,\cdots ,j_{i+1})\in S| \text{exactly k elements in}\ j_1,\cdots ,j_{i+1}\ \text{are 0}\}.\]
One has
\begin{align*}
	&	\binom{(i+1)\nu^{-1}}{l}=\sum_{( j_1,\cdots ,j_{i+1})\in S  }\binom{\nu^{-1}}{j_1}\cdots \binom{\nu^{-1}}{j_{i+1}}\\
	&=(\sum_{( j_1,\cdots ,j_{i+1})\in S_0  }+\sum_{( j_1,\cdots ,j_{i+1})\in S_1  }+\cdots+\sum_{( j_1,\cdots ,j_{i+1})\in S_i })\binom{\nu^{-1}}{j_1}\cdots \binom{\nu^{-1}}{j_{i+1}}\\
	&=\binom{i+1}{0}\varphi(i+1,l)+\binom{i+1}{1}\varphi(i,l)+\cdots+\binom{i+1}{i}\varphi(1,l).
	\end{align*}
Then
\begin{align}
	&\ \varphi (i+1,l) \notag=\binom{(i+1)\nu ^{-1}}{l}-\sum_{j=1}^{i}\binom{i+1}{j}\varphi(i+1-j,l) \notag\\
	&=\binom{(i+1)\nu^{-1}}{l}-\sum_{j=1}^{i}\binom{i+1}{j}\cdot \sum_{t=0}^{i-j}(-1)^t\binom{i+1-j}{t}\binom{(i+1-j-t)\nu^{-1}}{l} \notag  \\
	&=\binom{(i+1)\nu^{-1}}{l}-\sum_{j=1}^{i}\sum_{t=0}^{i-j}(-1)^t\binom{i+1}{j}\binom{i+1-j}{t}  \binom{(i+1-j-t)\nu^{-1}}{l} \notag \tag*{( let $ m=j+t$)}\\
	&=\binom{(i+1)\nu^{-1}}{l}-\sum_{m=1}^{i}\left [ \sum_{t=0}^{m-1}(-1)^t\binom{i+1}{m-t} \binom{i+1-(m-t)}{t}  \right ]\binom{(i+1-m)\nu^{-1}}{l} \notag\\  
	&=\binom{(i+1)\nu^{-1}}{l}-\sum_{m=1}^{i}(-1)^{m+1}\binom{i+1}{m}\cdot \binom{(i+1-m)\nu^{-1}}{l}  \ \ (by\  \ref{e62})\notag\\
	&=\sum_{m=0}^{i}(-1)^m\binom{i+1}{m}\binom{(i+1-m)\nu^{-1}}{l},  \notag 
\end{align}
so $\varphi(i+1,l)$ satisfies (\ref{f5}).
\ep

Now we prove Proposition \ref{fp1}.
\bp
By (\ref{e41}) (see also (\ref{e34})) we need to show that for $n\le 0, a,b\in L$,  \be\begin{aligned}\label{f13}  \delta_n(ab)=a\delta_n(b)+\sum_{n<l\le 0}\delta_l(a)\cdot\sum_{k=1}^{l-n}\binom{l}{k}[ \sum_{(j_1,\cdots,j_k)}^{j_1+\cdots+j_k=n+k-l}\delta_{j_1}\cdots \delta_{j_k}(b)]+\delta_n(a)b
\end{aligned}.\ee

First assume that $n\not\equiv 1\ (mod\ \nu)$. If some $\delta_l(a)\delta_{j_1}\cdots \delta_{j_k}(b)$ in right hand side  of  (\ref{f13}) with $l+j_1+\cdots+j_k=n+k$ is not identically 0, then $l+j_1+\cdots+j_k\equiv k+1\ (mod\ \nu)$, while $n+k\not\equiv k+1\ (mod\ \nu)$, which is impossible. So in the case   $n\not\equiv 1\ (mod\ \nu)$, $\delta_n=0$ and both sides of (\ref{f13}) are  identically 0, thus (\ref{f13})
is fulfilled.

From now on we assume that   $n=1-k\nu$ for some $k\ge 1$.

For $k\ge 1, a,b\in L$, one has 
\begin{align}
	D_k(ab) =\sum_{j=0}^{k}D_j(a) D_{k-j}(b), where\ D_j=\frac{1}{j!}(d/d\alpha)^j.\notag \end{align}
As $D_k={d_k}^{-1}\cdot \delta _{1-k\nu }$, one has \be	
\delta _{1-k\nu }(ab)=\sum_{j=0}^{k}\frac{d_k}{d_j\cdot d_{k-j}}\delta _{1-j\nu}(a) \delta _{1-(k-j)\nu}(b).  \label{f1}
\ee

Assume \quad  $\sum_{i=1}^{k}j_i=u$ with each $j_i\ge1$.
Then by (\ref{e64}),
\begin{align}
	\delta _{1-j_1\nu}\cdots \delta _{1-j_k\nu }=(\frac{\nu}{s} )^u\binom{\nu ^{-1}}{j_1} \cdots \binom{\nu ^{-1}}{j_k}(\frac{d}{d\alpha})^u. \label{f2}
\end{align}

We need to show that (\ref{f13}) holds for $n=1-k\nu$ for $k\ge 1$. As the case $k=1$ is clear, we need to prove the following identity for  $k\ge 2, a,b\in L$:

\[ \delta _{1-k\nu } (ab) = \left(a \delta _{1-k\nu }(b)+\delta _{1-k\nu }(a)b\right)+  \sum _{l=1}^{k-1}\delta _{1-l\nu }(a) \left [  \sum _{i=1}^{k-l}\binom{1-l\nu}{i}\sum _{(j_1,\cdots ,j_i)\in T(i,k-l)}\delta _{1-j_1\nu} \cdots \delta _{1-j_i\nu }(b)  \right ].  \]
{By (\ref{e64})} the right hand side  equals \[\left(a \delta _{1-k\nu }(b)+\delta _{1-k\nu }(a)b\right)+  \sum_{l=1}^{k-1}\delta _{1-l\nu }(a) \sum _{i=1}^{k-l}\binom{1-l\nu }{i} (\frac{\nu}{s} )^{k-l}\left (  \sum_{(j_1,\cdots ,j_i)\in T(i,k-l)}\binom{\nu^{-1} }{j_1}\cdots \binom{\nu ^{-1}}{j_i} \right )\left ( \frac{d}{d\alpha}  \right )^{k-l}(b). \]

Then by (\ref{f1}) one only need to show for $k\ge 2$ and $1 \le l \le k-1$,
\[	\sum_{i=1 }^{k-l}\binom{1-l\nu }{i}(\frac{\nu}{s} )^{k-l}\left ( \sum_{(j_1,\cdots ,j_i)\in T(i,k-l)}\binom{\nu^{-1}}{j_1}\cdots \binom{\nu^{-1}}{j_i}   \right )\left ( \frac{d}{d\alpha}  \right )^{k-l}(b)=\frac{d_k}{d_l\cdot d_{k-l}}\delta _{1-(k-l)\nu }(b),  \]
or,
\begin{align} \label{f3}
	&\sum_{i=1 }^{k-l}\binom{1-l\nu }{i}(\frac{\nu}{s} )^{k-l}\left ( \sum_{(j_1,\cdots ,j_i)\in T(i,k-l)}\binom{\nu^{-1}}{j_1}\cdots \binom{\nu^{-1}}{j_i}   \right )\left ( \frac{d}{d\alpha}  \right )^{k-l}=\frac{d_k}{d_l\cdot d_{k-l}}\delta _{1-(k-l)\nu }.   
\end{align}
As $ \delta _{1-(k-l)\nu }=d_{k-l}\cdot D_{k-l},$
right hand side  of (\ref{f3}) is \begin{align*}
	(\frac{\nu}{s} )^{k-l}\frac{\left [ \nu^{-1} \right ]_k }{\left [ \nu^{-1} \right ]_l }\cdot \frac{1}{(k-l)!}\left (  \frac{d}{d\alpha} \right ) ^{k-l}  
	=(\frac{\nu}{s} )^{k-l}\binom{\nu^{-1}-l}{k-l}\left ( \frac{d}{d\alpha}  \right ) ^{k-l},
\end{align*}
so (\ref{f3}) is equivalent to 
\begin{align}
	\sum_{i=1}^{k-l}\binom{1-l\nu}{i}\cdot \sum_{\left ( j_1,\cdots,j_i \right )\in T(i,k-l) } \binom{\nu ^{-1}}{j_1}\cdots \binom{\nu^{-1}}{j_i}=\binom{\nu^{-1}-l}{k-l}.  \label{f4}
\end{align}
By Lemma \ref{fl2}, the left hand side  of (\ref{f4}) is
\[\sum_{i = 1}^{k-l}\binom{1-l\nu}{i}\sum_{j=0}^{i-1}(-1)^j \binom{i}{j}\binom{(i-j)\nu^{-1}}{k-l}=\sum_{i=1}^{k-l}\sum_{j=0}^{i-1}(-1)^j\binom{1-l\nu}{i}\binom{i}{j}\binom{(i-j)\nu^{-1}}{k-l}.\]
So (\ref{f4}) is equivalent to 
\[ \sum_{i=1}^{k-l}\sum_{j=0}^{i-1}(-1)^j\binom{1-l\nu}{i}\binom{i}{j}\binom{(i-j)\nu^{-1}}{k-l}=\binom{\nu^{-1}-l}{k-l}.\]

Replacing $k-l$ by $u$, it is 
\begin{align}
	\sum_{i=1}^{u}\sum_{j=0}^{i-1}(-1)^j\binom{1-l\nu}{i}\binom{i}{j}\binom{(i-j)\nu^{-1}}{u}=\binom{\nu^{-1}-l}{u}, \ \ u\ge 1.  \label{f8}
\end{align}

The left hand side  of (\ref{f8}) equals 
\begin{align*}
	& \sum_{i=1}^{u}\sum_{j=0}^{i-1}(-1)^j \binom{1-l\nu }{j}\binom{1-l\nu -j}{i-j}\binom{(i-j)\nu^{-1}}{u}\ \ \ \ \text{(Let $t=i-j$)}\\
	&=\sum_{t=1}^{u}\left ( \sum_{j=0}^{u-t} (-1)^j\binom{1-l\nu}{j}\binom{1-l\nu-j}{t}  \right )\binom{t\nu^{-1}}{u} \\
	&=\sum_{t=1}^{u}(-1)^{u-t}\binom{1-l\nu }{t}\binom{-l\nu -t}{u-t}\binom{t\nu^{-1}}{u}   \ \ (\text{by Lemma } \ref{fl3} \text{ with $m=u-t$}) \\
	&=\sum_{t=0}^{u}(-1)^{u-t}\binom{1-l\nu }{t}\binom{-l\nu-t}{u-t}\binom{t\nu^{-1}}{u}.       
\end{align*}

So (\ref{f8}) is equivalent to 
\begin{align}
	\sum_{t=0}^{u}(-1)^{u-t}\binom{1-l\nu }{t}\binom{-l\nu-t}{u-t}\binom{t\nu^{-1}}{u} 
	=\binom{\nu^{-1}-l}{u}, \ \ u\ge 1.  \label{f12}
\end{align}
Set $z=1-l\nu,\ p(z)=\binom{\nu^{-1}z}{u}$ in (\ref{f11}),  we get (\ref{f12}).

Now we prove the formula (\ref{f15}) in Proposition \ref{fp1}.

By (\ref{e21}), for $b\in K(\alpha), n\in \mathbb Z$ one has
\begin{align*}
T^nb&=bT^n+\binom{n}{1}\sum_{k\geq1}\frac{d_k}{k!}b^{(k)}T^{n-k\nu}+\binom{n}{2}\sum_{k_1,k_2\geq1}\frac{d_{k_1}}{k_1!}\frac{d_{k_2}}{k_2!}b^{(k_1+k_2)}T^{n-(k_1+k_2)\nu}+\cdots\\
&=\sum_{k\geq0}\lambda_n^kb^{(k)}T^{n-k\nu}, \ \ \lambda_n^0=1.
\end{align*}
(Note that $\lambda_{n}^k$ is not the $k$-th power of $\lambda_{n}$.) Comparing the coefficients of $T^{n-k\nu}$, one finds that if $k>0$ then
\begin{align}
	\lambda_n^k&=\sum_{r=1}^{k}\left[\binom{n}{r}\cdot\sum_{\substack{s_1+s_2+...+s_r=k\\s_1,s_2,...,s_r\geq 1}}\frac{d_{s_1}}{s_1!}\frac{d_{s_2}}{s_2!}\cdots\frac{d_{s_r}}{s_r!} \right]=\sum_{r=1}^{k}\binom{n}{r}\cdot \left(\dfrac{\nu}{s}\right)^k\cdot \sum_{\substack{s_1+...+s_r=k\\s_1,...,s_r\geq 1}}\binom{\nu^{-1}}{s_1}\cdots \binom{\nu^{-1}}{s_r}	
	 \notag\\
	&=\left(\dfrac{\nu}{s}\right)^k\sum_{r=1}^k\sum_{j=0}^{r-1}(-1)^j\binom{n}{r}\binom{r}{j}\binom{(r-j)\nu^{-1}}{k}\ \ \ \ (by\ lemma\ 4.4)\notag\\
	&=\left(\dfrac{\nu}{s}\right)^k\sum_{i=1}^k\sum_{j=0}^{k-i}(-1)^j\binom{n}{i+j}\binom{i+j}{j}\binom{i\nu^{-1}}{k}\ \ \ \ \ \ ( i=r-j)\notag\\
	&=\left(\dfrac{\nu}{s}\right)^k\sum_{i=1}^k\sum_{j=0}^{k-i}(-1)^j\binom{n}{i}\binom{n-i}{j}\binom{i\nu^{-1}}{k}\notag\\
	&=\left(\dfrac{\nu}{s}\right)^k\sum_{i=1}^k\left[\binom{n}{i}\binom{i\nu^{-1}}{k}\sum_{j=0}^{k-i}(-1)^j\binom{n-i}{j}\right]\notag\\
	&=\left(\dfrac{\nu}{s}\right)^k\sum_{i=1}^k(-1)^{k-i}\binom{n}{i}\binom{n-i-1}{k-i}\binom{i\nu^{-1}}{k}.\ \ \ \ (by\ (\ref{e61})).\notag
\end{align}

\ep

Note that this proposition also holds for $L=K((\alpha^{-1}))$.

Assume  $ \left ( r , s  \right ) \in \mathbb Z^2, s \ne0, \nu =r+s, \nu>0. $
Let $\alpha=\alpha_{ r , s}, T=T_{r , s}$ and  $ \widetilde{D}_{r, s}=L((T^{-1}; \widetilde{\delta }_{r, s })) $ with $ L=K\left ( \alpha  \right ). $  We also write $ \widetilde{D}_{r, s } =K(\alpha )\left ( \left (T^{-1}; \widetilde{\delta }_{r, s }  \right )  \right ) $.
Next we extend the maps $\mathsf{v}_{r, s }$ and $\mathsf{f}_{r, s }$ to $ \widetilde{D}_{r, s }$.

Let \[ \mathsf{v}_{r, s }:\  \widetilde{D}_{r, s } \to \mathbb Z_{-\infty }, \ 0\mapsto -\infty; T\mapsto 1; z=\sum_{i\le n}a_i\left ( \alpha  \right ) T^i\mapsto n.  \]
Note that when we write some element in $\widetilde{D}_{r, s }$ as $z=\sum_{i \le n}a_i(\alpha )T^i$, we will always assume that $a_n\ne 0$. Similar conventions will be used henceforth. It is clear that $ \mathsf{v} _{r, s }$ is a  degree map on $ \widetilde{D}_{r, s } $.

One has $ \mathsf{v}_{r, s }\left ( T \right )=1$ and $\mathsf{v}_{r, s }\left ( a(\alpha ) \right )=0$ for $ 0\ne a(\alpha )\in K(\alpha )$.

Let $\widetilde{D}_{r, s }^{\le i}= \mathsf{v}_{r, s }^{-1}(\mathbb Z_{\le i}\cup \{-\infty\} )$ for $i\in \mathbb Z$. There is a natural filtration on $ \widetilde{D}_{r, s }$
\[\cdots\subset \widetilde{D}_{r, s }^{\le i-1}\subset \widetilde{D}_{r, s }^{\le i}\subset \widetilde{D}_{r, s }^{\le i+1}\subset\cdots\]
By (\ref{g2}) one has for $i,j\in\mathbb Z$, \[[f(\alpha)T^i,g(\alpha)T^j]=f(\alpha)[T^i,g(\alpha)]T^j+g(\alpha)[f(\alpha),T^j]T^i\in K(\alpha)\cdot T^{i+j-\nu},\] thus \[[\widetilde{D}_{r, s }^{\le i},\widetilde{D}_{r, s }^{\le j}]\subseteq \widetilde{D}_{r, s }^{\le i+j-\nu}.\]
As $\nu\ge1$, the associated graded algebra $gr(\widetilde{D}_{r, s })=\oplus_{i\in \mathbb Z}\ gr_i(\widetilde{D}_{r, s }), gr_i(\widetilde{D}_{r, s })=\widetilde{D}_{r, s }^{\le i}/\widetilde{D}_{r, s }^{\le i-1} $, is commutative.


Now we show that (See  \cite{a} for the definition of non-commutative  Henselian rings.)
  \begin{prop}\label{f35}
 The ring $ \widetilde{V}_{r, s }$ is a Henselian ring.
 \end{prop}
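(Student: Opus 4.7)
The plan is to identify $\widetilde{V}_{r,s}$ as the valuation ring of $\mathsf{v}_{r,s}$ on $\widetilde{D}_{r,s}$, exhibit it as a complete local ring with commutative residue field $K(\alpha)$, and then verify the defining condition of a non-commutative Henselian ring from \cite{a} by a Newton-type successive-approximation argument. The crucial structural input is that the associated graded algebra is commutative because $\nu\ge1$, so the non-commutative corrections at each step are of strictly lower valuation than the naive error, and can be absorbed into the next iteration.

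First, introduce
\[\widetilde{V}_{r,s}=\{z\in\widetilde{D}_{r,s}\mid \mathsf{v}_{r,s}(z)\le 0\},\qquad \widetilde{\mathfrak{m}}_{r,s}=\{z\in\widetilde{D}_{r,s}\mid \mathsf{v}_{r,s}(z)\le -1\}.\]
The axioms of the degree map together with the continuity of the product show that $\widetilde{V}_{r,s}$ is a subring of $\widetilde{D}_{r,s}$ and that $\widetilde{\mathfrak{m}}_{r,s}$ is a two-sided ideal. The explicit geometric-series inversion formula $(1+w)^{-1}=\sum_{j\ge0}(-1)^j w^j$ used earlier to show $\widetilde{D}_{r,s}$ is a division ring shows at the same time that any $z\in \widetilde{V}_{r,s}\setminus\widetilde{\mathfrak{m}}_{r,s}$ is a unit of $\widetilde{V}_{r,s}$, so $\widetilde{V}_{r,s}$ is a local ring with unique maximal ideal $\widetilde{\mathfrak{m}}_{r,s}$, and the residue ring is $\widetilde{V}_{r,s}/\widetilde{\mathfrak{m}}_{r,s}\cong K(\alpha)$. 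Completeness of $\widetilde{D}_{r,s}$ in the $\mathsf{v}_{r,s}$-topology passes at once to completeness of $\widetilde{V}_{r,s}$ in its $\widetilde{\mathfrak{m}}_{r,s}$-adic topology.

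Next, recall from \cite{a} that to establish the Henselian property it suffices to show the factorization-lifting statement: every monic polynomial $F(X)\in\widetilde{V}_{r,s}[X]$ whose reduction $\bar F(X)\in K(\alpha)[X]$ factors as $\bar F=\bar G\cdot\bar H$ with $\bar G,\bar H$ coprime monic factors admits a factorization $F=GH$ with $G,H\in\widetilde{V}_{r,s}[X]$ monic reducing to $\bar G,\bar H$. Choose any monic lifts $G_0,H_0\in\widetilde{V}_{r,s}[X]$ and, using coprimeness in the commutative ring $K(\alpha)[X]$, fix $A,B\in\widetilde{V}_{r,s}[X]$ with $AG_0+BH_0\equiv 1\pmod{\widetilde{\mathfrak{m}}_{r,s}[X]}$. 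Inductively I would construct $G_n,H_n\in\widetilde{V}_{r,s}[X]$ with $F-G_nH_n$ having all coefficients in $\widetilde{\mathfrak{m}}_{r,s}^{\,n+1}$, by solving a Bezout-type correction equation at each step. Completeness then guarantees that $G_n,H_n$ converge in $\widetilde{V}_{r,s}[X]$ to the desired $G,H$.

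The main obstacle is non-commutativity: the usual Hensel's lemma proof relies on commutative identities (division algorithm, distributivity across products) that fail in $\widetilde{V}_{r,s}[X]$. The estimate coming from (\ref{g1}) and (\ref{g2}), namely
\[[\widetilde{D}_{r,s}^{\le i},\,\widetilde{D}_{r,s}^{\le j}]\subseteq \widetilde{D}_{r,s}^{\le i+j-\nu},\]
rescues the argument: since $\nu\ge1$, every commutator strictly decreases the degree, hence strictly raises the $\widetilde{\mathfrak{m}}_{r,s}$-adic order. Consequently, at the $n$-th step of the iteration, working modulo $\widetilde{\mathfrak{m}}_{r,s}^{\,n+2}$ one may freely replace products in $\widetilde{V}_{r,s}[X]$ by their commutative counterparts in $gr(\widetilde{V}_{r,s})[X]$, perform the classical Hensel correction there, and then lift back; the non-commutative error is automatically of order $\ge n+2$ and is absorbed into the next round. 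This reduction to the commutative graded ring is what makes the argument go through, and is also the mechanism by which the analogous results for $\check D_{r,s}$ and $\widehat D_{r,s}$ (referenced after the statement) are expected to follow.
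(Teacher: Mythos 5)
Your structural setup coincides exactly with the paper's: the paper proves Proposition \ref{f35} by specializing its Lemma \ref{e69}, whose hypotheses are precisely the facts you establish first --- $\widetilde{V}_{r,s}=\widetilde{D}_{r,s}^{\le 0}$ is a complete local ring with maximal ideal $\widetilde{D}_{r,s}^{\le -1}$, and the commutator estimate $[\widetilde{D}_{r,s}^{\le i},\widetilde{D}_{r,s}^{\le j}]\subseteq \widetilde{D}_{r,s}^{\le i+j-\nu}$ with $\nu\ge 1$ forces $gr(\widetilde{V}_{r,s})$, hence the residue field, to be commutative. Where you diverge is the last step: the paper at that point simply invokes Theorem 2.1 of \cite{a} as a black box (``complete + commutator condition $\Rightarrow$ Henselian''), whereas you attempt to reprove that implication directly by a Newton/Hensel iteration, using the degree drop of commutators to reduce each correction step to the commutative graded ring. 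Your route is more self-contained and makes visible \emph{why} the cited theorem should hold, but it is only a sketch: you do not verify that the Bezout element $AG_0+BH_0\equiv 1$ remains usable at every stage (or get updated coherently), that the corrections can be chosen with the right degree bounds so that $G_n,H_n$ stay monic of fixed degrees, or --- most importantly --- that the monic-factorization-lifting property you verify is actually the definition of (or is known to be equivalent to) ``Henselian'' in the non-commutative sense of \cite{a}; Aryapoor's definition is phrased differently, and the equivalence is not something you can take for granted in the non-commutative setting. The paper's citation sidesteps all of these issues at the cost of opacity. So: same skeleton, genuinely different final step; yours buys transparency but would need the iteration details and the definitional match with \cite{a} nailed down before it counts as a complete proof.
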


 This result is a consequence of  the following lemma.
 
 	Assume $D$ is a division ring, $u:D\to \mathbb{Z}_{-\infty}$ is a degree map. Let $V=D^{\leq 0}$ be its valuation ring. Then $I = D^{\leq -1}$ is the unique maximal ideal of $V$, thus $V$ is a local ring. The degree map $u$ induces a filtration on $D$:
 \[\cdots \subset D^{\leq i-1}\subset D^{\leq i}\subset D^{\leq i+1}\subset \cdots ,  i\in \mathbb{Z},\] with the associated graded algebra $gr(D) =\bigoplus_{i\in \mathbb Z} gr_{i}(D), gr_{i}(D) =  D^{\leq i}/ D^{\leq i-1}$.
 \begin{lemma}\label{e69}
 	
 	Assume 
 	that 
 	
 	(1) $D$ is a division ring complete with respect to  $u$;
 	
 	(2) $[D^{\leq i},D^{\leq j}] \subseteq D^{\leq i+j-1}$, $\forall i,j\in \mathbb{Z}$. 
 	
 	Then the valuation ring $V$ is a Henselian ring.
 \end{lemma}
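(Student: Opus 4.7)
The plan is to deduce the Henselian property from three structural facts about $V$: it is a local ring with commutative residue field, it is $I$-adically complete, and condition (2) forces non-commutative corrections to decay one order faster than naive estimates suggest.

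First I would verify the local structure. For $z \in V \setminus I$, $u(z) = 0$, hence $u(z^{-1}) = 0$ and $z^{-1} \in V$; so $V^{\times} = V \setminus I$, and $V$ is local with maximal ideal $I$. Condition (2) gives $[V,V] = [D^{\leq 0}, D^{\leq 0}] \subseteq D^{\leq -1} = I$, so $V/I$ is a commutative division ring, i.e.\ a field. Iterating (2) yields $[I^a, I^b] \subseteq I^{a+b+1}$, the quantitative statement on which the whole argument rests. After normalizing so that $u(D^{\times}) = \mathbb{Z}$ and fixing a uniformizer $\pi \in V$ with $u(\pi)=-1$, one checks $D^{\leq -n} = I^n = \pi^n V = V\pi^n$ for all $n \geq 0$, so the $I$-adic and $u$-topologies on $V$ agree. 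Completeness of $D$ with respect to $u$, together with $V$ being $u$-closed in $D$, then gives that $V$ is $I$-adically complete, and $\bigcap_n I^n = \{0\}$.

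The heart of the argument is the Hensel lifting step. I would prove the standard lifting property: given a monic $f(x) \in V[x]$ (with $x$ central) and a factorization $\bar f = \bar g_0 \bar h_0$ in $(V/I)[x]$ into coprime monic polynomials, there exist monic $g,h \in V[x]$ with $f = gh$ lifting $\bar g_0, \bar h_0$. Lift $\bar g_0, \bar h_0$ to monic $g_0, h_0 \in V[x]$ and, using coprimality in the commutative ring $(V/I)[x]$, obtain $a_0, b_0 \in V[x]$ with $a_0 g_0 + b_0 h_0 \equiv 1 \pmod{I[x]}$. Inductively I would construct monic $g_n, h_n$ satisfying $f \equiv g_n h_n \pmod{I^{n+1}[x]}$ and $g_{n+1} \equiv g_n$, $h_{n+1} \equiv h_n \pmod{I^{n+1}[x]}$ by writing $g_{n+1} = g_n + u_n$, $h_{n+1} = h_n + v_n$ with $u_n, v_n \in I^{n+1}[x]$ obtained from a Bézout-style manipulation of the error $f - g_n h_n \in I^{n+1}[x]$ against $a_0, b_0$. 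Completeness then produces $g = \lim g_n$, $h = \lim h_n \in V[x]$ with $f = gh$.

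The main obstacle will be controlling the non-commutativity in the Newton step. The identity $g_{n+1}h_{n+1} - g_n h_n = g_n v_n + u_n h_n + u_n v_n$ involves a linear equation for $u_n, v_n$ modulo $I^{n+2}$, and the usual Bézout-based correction formula requires, in the commutative case, freely swapping factors — every such swap introduces a commutator into our setting. This is where the iterated form $[I^a, I^b] \subseteq I^{a+b+1}$ of condition (2) is indispensable: any commutator arising when we move an element of $I^{n+1}$ past something in $V$ already lies in $I^{n+2}$, so the non-commutative error is of strictly higher order than the correction being made and can be folded into the next Newton step. Consequently the iteration proceeds exactly as in the classical commutative case, converges in the complete ring $V$, and yields the desired factorization; hence $V$ is Henselian in the sense of \cite{a}.
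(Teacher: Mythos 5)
Your proposal is correct in substance but takes a genuinely different route from the paper. The paper's proof is essentially a verification of hypotheses followed by a citation: it checks that $V$ is a local ring with maximal ideal $I$, that $\bigcap_n I^n=\{0\}$, that $V$ is complete, and that condition (2) makes the associated graded ring $gr(V)$ (hence the residue field $V/I$) commutative, and then invokes Theorem 2.1 of \cite{a}, which says precisely that such a ring is Henselian. You establish the same structural facts --- locality, $I$-adic completeness, commutativity of $V/I$, and graded commutativity in the sharpened form $[I^a,I^b]\subseteq I^{a+b+1}$ --- but then reprove the cited theorem from scratch by running the classical B\'ezout/Newton factorization-lifting iteration, using $[I^{n+1},V]\subseteq I^{n+2}$ to push each commutator error one order deeper than the correction being made; this is exactly the mechanism underlying Aryapoor's theorem, so your argument is self-contained where the paper's is citation-based. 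The price of self-containment is that two points you elide must actually be carried out: first, the degree bookkeeping in the Newton step (division with remainder by the monic $g_n$, $h_n$ so that the factors stay monic of fixed degrees --- this works over a non-commutative ring for monic divisors, and the extra products it introduces are again absorbed by the commutator condition, but it is where most of the writing lies); second, a check that the monic-factorization-lifting property you prove agrees with, or implies, the definition of Henselian used in \cite{a}, since for non-commutative local rings several inequivalent-looking formulations are in circulation. Neither is a mathematical obstruction. A last small remark: your normalization $u(D^\times)=\mathbb{Z}$ silently discards the trivial-valuation case, where condition (2) with $i=j=0$ forces $D$ to be a commutative field and the conclusion is immediate; this should be stated rather than assumed away.
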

 \bp
 
 One has that  $V$ is a local ring with the unique maximal ideal of $I$,  $\cap_{n\in \mathbb Z} I^n = \{0\}$. 	The degree map $u$ restricts to the degree map $u|_V$ on $V$, which induces the corresponding filtration on $V$. 
 By (2), $gr(D)$ is commutative, thus 	$gr(V)$ is also commutative. It follows that the residue field $V/I$ is also commutative.

 As  $D$ is complete with respect to  $u$, 
 $V$ is also complete with respect to   $u|_V$.
 
 By Theorem 2.1 of \cite{a}, $V$ is Henselian.\ep

Let \[\mathsf{f}_{r, s }:\widetilde{D}_{r, s } \to K(X, Y^{1/s }), z=\sum_{i \le n}a_i(\alpha )T^i\mapsto a_n(XY^{-r /s })Y^{n/s }, 0\mapsto 0. \]

Its image is $\mathsf{f}_{r, s } (\widetilde{D}_{r, s })=\bigcup_{n\in \mathbb Z}K(XY^{-r /s })Y^{n/s }$, and
$ \mathsf{f}_{r, s } (\widetilde{D}_{r, s })\setminus\{0\}$ is a multiplicative group.
\begin{prop}
For	$ z_1, z_2 \in \widetilde{D}_{r, s }$, one has
	
	(1)\ $\mathsf{f}_{r, s }(z_1z_2)=\mathsf{f}_{r, s }(z_1)\mathsf{f}_{r, s }(z_2). $
	
	(2)\ For $ \lambda  \in K,  \mathsf{f}_{r , s} (\lambda )=\lambda;  \mathsf{f}_{r, s }(\lambda z)=\lambda \mathsf{f}_{r , s}(z). $
	
	(3)$ \mathsf{f}_{r, s }:\widetilde{D}_{r, s } \to K(X, Y^{1/s })   $
	is a multiplicative homomorphism.
	In particular,
	\[ \mathsf{f}_{r, s }(z^{-1})={\mathsf{f}_{r, s }(z)}^{-1}, z\ne 0. \]
	
	(4) If $ \mathsf{v}_{r, s }( z_1 )=\mathsf{v}_{r, s }( z_2)=\mathsf{v}_{r, s }( z_1+z_2 )$, then $\mathsf{f}_{r, s }( z_1+z_2)=\mathsf{f}_{r, s }( z_1 )+\mathsf{f}_{r, s }( z_2 ).  $ \\
\end{prop}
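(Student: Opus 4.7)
The proof plan rests on one central observation: the deformation formula (\ref{f33}) only modifies terms of strictly lower degree in $T$, since every correction in $T\cdot f - fT$ lies in degree $\le 1-\nu < 1$. Hence for the purposes of computing the top-degree coefficient of a product, the multiplication in $\widetilde{D}_{r,s}$ behaves just like the undeformed Laurent series multiplication. This is precisely what $\mathsf{f}_{r,s}$ records.

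I would begin with part (1). Write $z_1=\sum_{i\le m}a_i(\alpha)T^i$ and $z_2=\sum_{j\le n}b_j(\alpha)T^j$ with $a_m,b_n\ne 0$. Using the explicit product formula from Proposition \ref{fp1},
\[
a_iT^i\cdot b_jT^j=\sum_{k\ge 0}\lambda_i^k a_i(\alpha)\,b_j^{(k)}(\alpha)\,T^{i-k\nu+j},
\]
and since $\nu\ge 1$ with $\lambda_i^0=1$, the maximum $T$-degree of $a_iT^i\cdot b_jT^j$ is $i+j$, attained only by the $k=0$ term $a_ib_j T^{i+j}$. Summing over all $i\le m,j\le n$, the only contribution to degree $m+n$ in $z_1z_2$ comes from $(i,j)=(m,n),k=0$, giving leading term $a_m(\alpha)b_n(\alpha)T^{m+n}$; this is nonzero since $K(\alpha)$ is a field. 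Therefore $\mathsf{v}_{r,s}(z_1z_2)=m+n$ and
\[
\mathsf{f}_{r,s}(z_1z_2)=a_m(XY^{-r/s})\,b_n(XY^{-r/s})\,Y^{(m+n)/s}=\mathsf{f}_{r,s}(z_1)\mathsf{f}_{r,s}(z_2),
\]
using that $K(X,Y^{1/s})$ is commutative. The zero case is handled by the convention $\mathsf{f}_{r,s}(0)=0$.

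Part (2) is essentially bookkeeping: for $\lambda\in K\subseteq K(\alpha)$ one has $\lambda=\lambda T^0$, so $\mathsf{f}_{r,s}(\lambda)=\lambda\cdot Y^0=\lambda$; and if $z=\sum_{i\le n}a_i(\alpha)T^i$ then $\lambda z=\sum_{i\le n}\lambda a_i(\alpha)T^i$ has leading coefficient $\lambda a_n(\alpha)$, yielding $\mathsf{f}_{r,s}(\lambda z)=\lambda\, a_n(XY^{-r/s})Y^{n/s}=\lambda\mathsf{f}_{r,s}(z)$. Part (3) follows from (1) together with $\mathsf{f}_{r,s}(1)=1$ (a special case of (2)): applying (1) to $1=z\cdot z^{-1}$ gives $\mathsf{f}_{r,s}(z)\mathsf{f}_{r,s}(z^{-1})=1$ in the field $K(X,Y^{1/s})$, so $\mathsf{f}_{r,s}(z^{-1})=\mathsf{f}_{r,s}(z)^{-1}$.

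For part (4), write $z_1=\sum_{i\le n}a_i(\alpha)T^i$ and $z_2=\sum_{j\le n}b_j(\alpha)T^j$, where we do \emph{not} assume $a_n,b_n\ne 0$ but pad with zeros up to the common index $n=\mathsf{v}_{r,s}(z_1)=\mathsf{v}_{r,s}(z_2)$. Then $z_1+z_2=\sum_{i\le n}(a_i+b_i)(\alpha)T^i$, and the assumption $\mathsf{v}_{r,s}(z_1+z_2)=n$ forces $a_n+b_n\ne 0$. Consequently
\[
\mathsf{f}_{r,s}(z_1+z_2)=(a_n+b_n)(XY^{-r/s})Y^{n/s}=\mathsf{f}_{r,s}(z_1)+\mathsf{f}_{r,s}(z_2).
\]
The only point requiring any care is the multiplicativity in (1), where one must verify that no lower-index pairs $(i,j)\ne(m,n)$ together with deformation terms (the $k>0$ contributions) conspire to produce a competing degree-$(m+n)$ term; the strict inequality $i-k\nu+j<m+n$ whenever $(i,j,k)\ne(m,n,0)$, guaranteed by $\nu\ge 1$, rules this out. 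This is the key structural fact, and it is the only genuine obstacle in the argument.
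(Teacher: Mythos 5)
Your proposal is correct and follows essentially the same route as the paper: both reduce everything to the observation that the leading term of $z_1z_2$ is $a_m(\alpha)b_n(\alpha)T^{m+n}$ because the deformation corrections all drop the $T$-degree by at least $\nu\ge 1$. You simply spell out in more detail (via the explicit product formula and the inequality $i-k\nu+j<m+n$ for $(i,j,k)\ne(m,n,0)$) what the paper asserts in one line, and the remaining parts (2)--(4) are handled the same way in both.
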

\begin{proof}
	(1) Assume $z_1=\sum_{i\le n}a_i ( \alpha ) T^i, z_2=\sum_{j\le m}b_j ( \alpha ) T^j$. Then \[z_1z_2=a_n(\alpha)b_m ( \alpha ) T^{n+m}+\sum_{i< n+m}c_i(\alpha)T^i,\] thus $\mathsf{f}_{r, s }(z_1z_2)=\mathsf{f}_{r, s }(z_1)\mathsf{f}_{r, s }(z_2)$.
	
	(2) is clear.
	
	(3) By (2) $\mathsf{f}_{r, s }(1)=1$. Then it follows from  (1) that $ \mathsf{f}_{r, s }$
	is a multiplicative homomorphism.
	
	(4) is clear.
	
\end{proof}

Now we embed $D_1$ into $\widetilde{D}_{r, s } $. By (\ref{g1}) one has 
\[[T,\alpha]=s^{-1}T^{1-\nu},\ so\]  \[ \left [ T^s,\alpha T^r\right ]=[ T^s,\alpha ]T^r=s T^{s-1}[T,\alpha]T^r=s T^{s-1}(s^{-1}T^{1-\nu})T^r=T^{s-\nu+r}=1.\] 
So there exists a unique $K$-algebra homomorphism
\[ \eta :A_1 \to \widetilde{D}_{r, s }, \textbf{q} \mapsto  T^s, \textbf{p}\mapsto\alpha T^r, \]
which is injective as $ A_1$ is simple . Since $ \widetilde{D}_{r, s }$is a division ring , 
$ \eta $ can be extented to an injective $K$-homomorphism of division algebras
\[ \eta :D_1 \to \widetilde{D}_{r, s }, \ zw^{-1} \mapsto \eta(z)\eta(w )^{-1}. \]
Henceforth we will identify $ D_1$ as a subalgebra of $ \widetilde{D}_{r, s }$
by $\eta $, and we denote \[  T=\textbf{q}^{1/s }, \alpha=\textbf{p}T^{-r}=\textbf{pq}^{-r /s }.\]

Let  $ \widehat {D}_{r, s }$ be the completion of the $K$-division algebra $D_1$ 
in  $\widetilde{D}_{r, s }$.  As $ \widetilde{D}_{r, s }$ is complete,  
$ \widehat {D}_{r, s }$ is a complete $K$-division subalgebra of $ \widetilde{D}_{r, s }$. Thus one has \[A_1\subset D_1\subset \widehat {D}_{r, s }\subseteq \widetilde{D}_{r, s }.\]

The map $\mathsf{f}_{r, s } $ restricts to a multiplicative homomorphism
$ \mathsf{f}_{r, s } : D_1 \to K(X, Y^{1/s }). $ One has
\[\mathsf{f}_{r, s }(q)=\mathsf{f}_{r, s }(T^s)=Y,  \mathsf{f}_{r, s }(p)=\mathsf{f}_{r, s }(\alpha T^r)=XY^{-r/s}Y^{r/s}=X. \]
If $ z\in A_1, \mathsf{f}_{r, s } (z) \in K[X, Y]$. \ Thus if $ z, w \in A_1$\ with $ w \ne 0$, then $\mathsf{f}_{r, s }(zw^{-1})\in K(X, Y).  $So we have the following commutative diagram:
\begin{center}
	\begin{tikzcd}
		& D_1  \arrow[r,   "\mathsf{f}_{r, s} "]  \arrow[d]  &K(X, Y)\arrow[d] \\
		& \widetilde{D}_{r, s } \arrow[r,  "\mathsf{f}_{r, s} "] &K(X, Y^{1/s})
	\end{tikzcd}
\end{center}
Note that if $(r,s)=(0,1)$, then $ \widetilde{D}_{0, 1}=\widehat{D}_{0, 1}=K(\textbf{p})((\textbf{q}^{-1}; \delta ))  $.

	It is clear that $\mathsf{v}_{r, s}$ restrict to a degree map  on 
	$\widehat{D}_{r, s }$, 
	the associated graded algebra $gr( \widehat{D}_{r, s })$ of $ \widehat{D}_{r, s }$ with respect to this degree map is commutative, and the valuation  ring
	$ \widehat{V}_{r, s } =\widehat{D}_{r, s}\cap \widetilde{V}_{r, s}.$ 
	By Lemma \ref{e69}, one has 
	\bco
The valuation  ring $\widehat{V}_{r, s }$ of $\widehat{D}_{r, s }$	is a Henselian ring.	\\[1mm]
\eco

Next we give the result parallel to Proposition \ref{fp1}.

Assume that $(r,s )\in \mathbb Z^2$ with $r \ne 0$, where $r,s$ are not required to be coprime. Let $\nu =r +s$. Assume $\nu>0$.

Let $d'_k=(\nu/r)^k[\nu^{-1}]_k$ for $k>0$, and $d'_0=1$. Let $L=K(\beta)$ with $\beta$ being an indeterminant, $\eta_i\in End_{K-lin}(L)$ be defined as follows:
$$\eta _i=
\left\{
\begin{array}{ll}
	\frac{d'_k}{k!} (\frac{d}{d\beta})^k, \ \ & i=1-k\nu \text{ with } k\ge 0;\\
	0, & i\le 1 \text{ and } i\not\equiv 1\ (mod\ \nu).
\end{array}
\right.$$

\begin{coro}   \label{fp2}  
		 The set of linear operators $\eta_i,i\in \mathbb Z_{\le0}$, satisfy (\ref{e41}), so there exists 
	a multiplication  $\Tilde{\eta} $ on  $L_\textbf{r}((T^{-1}))$ such that
	 
	\[f\cdot T=T\cdot f+\sum_{k=1}^{\infty } T^{1-k\nu }\cdot \eta _{1-k\nu }(f)
	=T\cdot f+r^{-1}T^{1-\nu}f'+\frac{1}{2!}\frac{1-\nu}{r^2}T^{1-2\nu}f^{(2)}+\cdots,f\in L,\]   and $L_\textbf{r}((T^{-1};\Tilde{\eta}))$ is a deformed Laurent series ring, with the multiplication \[\sum_{i\le m} T^ia_i \cdot \sum_{j\le n} T^jb_j= \sum_{i\le m,j\le n}  T^ia_i\cdot T^j b_j,\ where\]  \begin{align}\label{} & T^ia_i\cdot T^jb_j=\sum_{k\geq 0}\lambda_j^kT^{j-k\nu+i}a_i^{(k)}b_j,
	\end{align}and $\lambda_j^k$ is defined as in (\ref{f15}).
	
\end{coro}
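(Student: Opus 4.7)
The plan is to reduce Corollary \ref{fp2} to Proposition \ref{fp1} combined with Corollary \ref{f30}. First I would observe that the operators $\eta_i$ on $L = K(\beta)$ are structurally identical to the $\delta_i$ of Proposition \ref{fp1} under the substitution $\alpha \mapsto \beta$ and $s \mapsto r$: the coefficient $d'_k = (\nu/r)^k[\nu^{-1}]_k$ is obtained from $d_k$ by swapping $s$ and $r$, and $\nu = r+s$ is symmetric in $r,s$. The entire proof of Proposition \ref{fp1}, including the reduction to the identity (\ref{f12}) and Lemmas \ref{fl3}, \ref{fl1}, \ref{fl2}, goes through verbatim after this substitution, since none of the combinatorial steps use any distinguishing property of $s$ versus $r$. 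This shows that the $\eta_i$ satisfy the Leibniz-type identity (\ref{e41}).

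To apply Corollary \ref{f30} I would then check that (\ref{f19}) holds, not merely (\ref{e41}). This is where the commutativity of the setup is essential: $L$ is a commutative field and each $\eta_i$ is a scalar multiple of a power of $d/d\beta$, so the family $\{\eta_i\}_{i\le 0}$ is pairwise commuting. Swapping the roles of $a$ and $b$ in (\ref{e41}) for $\eta_i$, and using commutativity in $L$ together with commutativity of the compositions $\eta_{j_1}\cdots \eta_{j_k}$, transforms (\ref{e41}) into (\ref{f19}). Corollary \ref{f30} then supplies the existence and uniqueness of a multiplication $\tilde\eta$ making $L_{\textbf{r}}((T^{-1};\tilde\eta))$ a right deformed Laurent series ring with $a \cdot T = Ta + \sum_{i \le 0}T^i \eta_i(a)$; this collapses to the displayed identity $f \cdot T = Tf + \sum_{k \ge 1}T^{1-k\nu}\eta_{1-k\nu}(f)$ because $\eta_i = 0$ unless $i \equiv 1 \pmod{\nu}$. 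The same corollary also yields completeness and identifies the center.

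For the closed formula for $T^i a_i \cdot T^j b_j$, the cleanest route I would take is to invoke the anti-isomorphism $\phi$ from the remark following Corollary \ref{f30}. Building the left deformed Laurent series ring $L((T^{-1};\tilde\delta))$ from the same family $\delta_i := \eta_i$, Proposition \ref{fp1} applied with $s$ replaced by $r$ yields the multiplication $T^n \cdot a = \sum_{k\ge 0}\lambda_n^k a^{(k)} T^{n-k\nu}$ with $\lambda_n^k$ as in (\ref{f15}) after the substitution $s \mapsto r$; pulling this back through $\phi$ reverses the order of factors and gives exactly $T^i a_i \cdot T^j b_j = \sum_{k\ge 0}\lambda_j^k T^{j-k\nu+i} a_i^{(k)} b_j$. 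The only point requiring care is the bookkeeping of this left-right reversal and of the $s \leftrightarrow r$ swap; no new combinatorial obstacle arises, since the entire argument is a transcription of Proposition \ref{fp1} via the anti-isomorphism. If one preferred not to use $\phi$, the same formula could be derived directly by induction on $j$ from $a \cdot T = Ta + \sum_{k\ge 1}T^{1-k\nu}\eta_{1-k\nu}(a)$, repeating verbatim the combinatorial computation of the $\lambda_n^k$ at the end of the proof of Proposition \ref{fp1}.
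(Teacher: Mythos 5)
Your proposal is correct and matches the route the paper intends: the paper leaves the proof of Corollary \ref{fp2} implicit, relying on Proposition \ref{fp1} with $s$ and $r$ interchanged (so $d_k$ becomes $d'_k$) together with the anti-isomorphism $\phi$ set up after Corollary \ref{f30}, which is precisely your reduction. Your extra check that (\ref{e41}) implies (\ref{f19}) by swapping $a$ and $b$ and using commutativity of $L$ is the right (and only) point needing verification, and it goes through as you describe.
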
 
Denote this ring by $\check{D}_{r,s}$, and also by  $K(\beta)_\textbf{r}\left ( \left (T^{-1};\Tilde{\eta}_{r,s}\right )  \right )$. 

It satisfies that for $f\in L, n\in \mathbb Z$, 
$$[f,T^n]=\binom{n}{1}\sum_{k>0}\frac{d'_k}{k!} T^{n-k\nu}\cdot (\frac{d}{d\beta})^kf + \binom{n}{2}\sum_{k_1,k_2>0}\frac{d'_{k_1}d'_{k_2}}{k_1!k_2!}T^{n-(k_1+k_2)\nu}\cdot(\frac{d}{d\beta})^{k_1+k_2} f + \cdots$$
In particular,
$$[\beta,T^n]=nr^{-1}T^{n-\nu}.$$
Then 
$$[T^s\beta,T^r]=T^s[\beta,T^r]=T^srr^{-1}T^{r-\nu}=1.$$
So there exists a unique $K-$algebra homomorphism
$$\xi:A_1\rt K(\beta)_\textbf{r}((T^{-1};\Tilde{\eta}_{r,s})),\ \ \textbf{p}\mapsto T^r, \textbf{q}\mapsto T^s \beta,$$
which can be extended to an injective $K$-homomorphism from $D_1$ to $\check{D}_{r,s}$. We identify $D_1$ as a subalgebra of $K(\beta)_\textbf{r}((T^{-1};\Tilde{\eta}_{r,s}))$ by $\xi$,   and we denote \[  T=\textbf{p}^{1/r }, \beta=\textbf{p}^{-s/r}\textbf{q}.\]

Analogously, one defines \[ u_{r, s }:\  \check{D}_{r, s } \to \mathbb Z_{-\infty }, \ 0\mapsto -\infty; T\mapsto 1; z=\sum_{i\le n}T^ib_i\left ( \beta  \right ) \mapsto n,  \] which is a degree map on $ \check{D}_{r, s }$. The associated graded algebra $gr( \check{D}_{r, s })$ of $ \check{D}_{r, s }$ with respect to $u_{r, s }$ is commutative, and the valuation  ring
 $ \check{V}_{r, s } :={u_{r, s }}^{-1}(\mathbb Z_{\le0}\cup \{-\infty\} )$ of $ \check{D}_{r, s }$ is a Henselian ring.
 
 It is clear that the completion of $D_1$ in  $ \check{D}_{r, s }$ with respect to $u_{r,s}$ is isomorphic to $\widehat{D}_{r,s}$.

 \section{Topological generators of $\widehat{D}_{r,s}$}
 \setcounter{equation}{0}\setcounter{theorem}{0}
 
 Let $K$ be a field of characteristic 0. Let $A_1=A_1(K), D_1=D_1(K)$.

Assume that $(r, s)\in \mathbb{Z}^2$, $s \ne0$, and $r $ 
, $s $ are coprime. Let $\nu = r + s > 0$. Choose $i , j\in \mathbb{Z}$ such that $r i + s j =1$. 
Let $$T_0= \textbf{p}^i\textbf{q}^j,\ \ \alpha_0 = \textbf{p}^s \textbf{q}^{-r}.$$ One has
\be
\binom{s\ \ \ i}{-r\ \ j} \binom{j\ \ -i}{r\ \ s}=	\binom{1\ \ 0}{0\ \ 1},
\ee
which can be applied to simplify some of the calculations later.

One has  $\mathsf{v}_{r, s}(T_0)= 1,\mathsf{v}_{r, s}(\alpha_0)= 0 .$
Let $$R = \left\{\ \sum_{i\leq n}a_i(\alpha_0)T_0^i\ |\ a_i\in K(X), n\in \mathbb Z.\right\}\cup \{0\}.$$
One has $R\subseteq \widetilde{D}_{r, s}$, and it is clear that $R$ is complete. We will show that 	$R = \widehat{D}_{r, s}$, thus $\alpha_0,T_0$ can be viewed as a pair of topological generators of $\widehat{D}_{r, s}$.

\begin{lemma}
	Assume $z_i=\textbf{p}^{l_i}\textbf{q}^{k_i}\in A_1$, $i=1,\cdots,t$ and $t\ge 2$. Let $l=\sum_{i=1}^t l_i, k=\sum_{i=1}^t k_i$. Then \[\Pi_{i=1}^t z_i=\textbf{p}^l\textbf{q}^k+\sum_{m\ge1} \mu_m \textbf{p}^{l-m}\textbf{q}^{k-m}, \mu_m\in \mathbb Q.\]
\end{lemma}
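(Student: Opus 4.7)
The plan is to prove this by straightforward induction on $t$, using the basic product formula (\ref{f32}) as the engine. The key structural observation driving the whole argument is that (\ref{f32}) decreases both exponents by the same integer $k$ in each correction term, so this property is preserved under further multiplication.

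For the base case $t=2$, the statement is literally (\ref{f32}): with $z_1=\mathbf{p}^{l_1}\mathbf{q}^{k_1}$ and $z_2=\mathbf{p}^{l_2}\mathbf{q}^{k_2}$, we read off $\mu_m=[k_1]_m[l_2]_m/m!\in\mathbb{Q}$, and the monomial exponents are $(l_1+l_2-m,\,k_1+k_2-m)=(l-m,k-m)$ as required.

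For the inductive step, assume the formula holds for a product of $t$ factors and consider $\prod_{i=1}^{t+1}z_i=\bigl(\prod_{i=1}^t z_i\bigr)\cdot z_{t+1}$. By hypothesis the left factor equals $\mathbf{p}^l\mathbf{q}^k+\sum_{m\ge 1}\mu_m\mathbf{p}^{l-m}\mathbf{q}^{k-m}$ with $\mu_m\in\mathbb{Q}$. Multiplying each summand on the right by $\mathbf{p}^{l_{t+1}}\mathbf{q}^{k_{t+1}}$ via (\ref{f32}) gives
\[
\mathbf{p}^{l-m}\mathbf{q}^{k-m}\cdot\mathbf{p}^{l_{t+1}}\mathbf{q}^{k_{t+1}}=\mathbf{p}^{l-m+l_{t+1}}\mathbf{q}^{k-m+k_{t+1}}+\sum_{n\ge 1}\frac{[k-m]_n[l_{t+1}]_n}{n!}\mathbf{p}^{l-m+l_{t+1}-n}\mathbf{q}^{k-m+k_{t+1}-n}.
\]
Setting $L=l+l_{t+1}$ and $K=k+k_{t+1}$, every monomial appearing has the form $\mathbf{p}^{L-p}\mathbf{q}^{K-p}$ with $p=m+n\ge 0$, and the coefficient $\mu_m\cdot[k-m]_n[l_{t+1}]_n/n!$ lies in $\mathbb{Q}$. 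The unique contribution with $p=0$ comes from $m=0,n=0$ and has coefficient $1$, producing the leading monomial $\mathbf{p}^L\mathbf{q}^K$. Collecting like terms over all $p\ge 1$ into new rational coefficients gives the desired form.

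There is no real obstacle; the only thing to be careful about is the bookkeeping that every correction term produced at either level decreases the two exponents by the same positive integer, so that the combined expansion still fits the shape $\mathbf{p}^{L-p}\mathbf{q}^{K-p}$ with $p\ge 1$. This is immediate from the symmetric shape of (\ref{f32}).
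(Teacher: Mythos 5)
Your proof is correct and matches the paper's approach: the paper's entire proof is the one-line remark that the lemma ``follows from (\ref{f32})'', and your induction on $t$ simply writes out the routine bookkeeping that this remark leaves implicit. Nothing is missing.
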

This follows from (\ref{f32}).

\begin{lemma}\label{h6}
$R$ is a complete sub-division-ring of $\widetilde{D}_{r,s}$.
\end{lemma}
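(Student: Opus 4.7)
The plan is to verify that $R$ is closed under addition, multiplication, and taking inverses of nonzero elements. Completeness is immediate (as noted in the text): a Cauchy sequence $(z_k)$ in $R$ has, by uniqueness of the representation $\sum a_i(\alpha_0)T_0^i$ (uniqueness follows because $\mathsf{v}_{r,s}(a_i(\alpha_0)T_0^i)=i$), each $T_0^i$-coefficient eventually stabilizing in $K(\alpha_0)$ together with a uniform upper bound on $\mathsf{v}_{r,s}$-degree, giving a limit in $R$. Closure under addition is termwise.

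The main step is closure under multiplication. By distributivity, continuity of multiplication, and completeness of $R$, this reduces to showing that $T_0^n\cdot f(\alpha_0)\in R$ for every $n\in\mathbb Z$ and $f\in K(\alpha_0)$, and by writing $f=p/q$ with $p,q\in K[\alpha_0]$ (using the inversion step below for $q$), it further reduces to showing every monomial $T_0^n\alpha_0^m$ lies in $R$. To see this, I would expand $T_0^n\alpha_0^m = (\textbf{p}^i\textbf{q}^j)^n(\textbf{p}^s\textbf{q}^{-r})^m$ using (\ref{f32}) and its extension to negative exponents valid in $D_1$, obtaining a $K$-linear combination of monomials $\textbf{p}^a\textbf{q}^b$ of $\mathsf{v}_{r,s}$-degree $n-k\nu$ for $k\ge 0$. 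Since $\gcd(r,s)=1$, the matrix $\bigl(\begin{smallmatrix}i&s\\ j&-r\end{smallmatrix}\bigr)$ is unimodular (determinant $-1$), so the pair $(a,b)$ is uniquely of the form $(i\alpha+s\beta,\,j\alpha-r\beta)$ with $\alpha=ra+sb$ and $\beta=ja-ib$. In the commutative associated graded algebra $gr(\widetilde D_{r,s})$ (commutativity coming from the filtration estimate $[\widetilde D_{r,s}^{\le i},\widetilde D_{r,s}^{\le j}]\subseteq\widetilde D_{r,s}^{\le i+j-\nu}$), the monomial $\textbf{p}^a\textbf{q}^b$ has the same image as $T_0^\alpha\alpha_0^\beta$, so
\[
\textbf{p}^a\textbf{q}^b = T_0^\alpha\alpha_0^\beta + z',\qquad z'\in\widetilde D_{r,s}^{\le\alpha-\nu},
\]
where $z'$, obtained by subtracting off the finite expansion of $T_0^\alpha\alpha_0^\beta$ in terms of $\textbf{p}^{a'}\textbf{q}^{b'}$'s, is again a finite $K$-linear combination of monomials of strictly lower degree. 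Iterating and collecting by $T_0$-power produces a convergent series $\textbf{p}^a\textbf{q}^b=\sum_{l\ge 0}u_l(\alpha_0)T_0^{\alpha-l\nu}$ with $u_l\in K[\alpha_0]$, placing $\textbf{p}^a\textbf{q}^b$ in $R$.

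For closure under inverse, given a nonzero $z=\sum_{i\le n}a_i(\alpha_0)T_0^i\in R$ with $a_n\ne 0$, I factor $z=a_n(\alpha_0)T_0^n(1+w)$ where $w=\sum_{k\ge 1}T_0^{-n}a_n(\alpha_0)^{-1}a_{n-k}(\alpha_0)T_0^{n-k}$ satisfies $\mathsf{v}_{r,s}(w)\le -1$, exactly as in the proof that $\widetilde D_{r,s}$ is a division ring. Each summand defining $w$ lies in $R$ by the multiplication step, so $w\in R$ by completeness. Then $z^{-1}=(1+w)^{-1}T_0^{-n}a_n(\alpha_0)^{-1}=\bigl(\sum_{j\ge 0}(-w)^j\bigr)T_0^{-n}a_n(\alpha_0)^{-1}$; the geometric series converges in $R$ by closure under multiplication and completeness, while $a_n(\alpha_0)^{-1}\in K(\alpha_0)\subset R$.

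The principal obstacle lies in the iterated substitution $\textbf{p}^a\textbf{q}^b\mapsto T_0^\alpha\alpha_0^\beta+(\text{lower-degree monomials})$: one must verify that for each fixed target $T_0$-power $T_0^{\alpha-l\nu}$, only finitely many monomials in the recursion tree contribute to the coefficient $u_l(\alpha_0)$. This is ensured by the strict degree drop of at least $\nu$ per substitution (from the filtration estimate) together with the finite branching of each commutation expansion; a careful bookkeeping on the recursion tree—tracking at each level the set of monomials still needing expansion and their degrees—confirms the finiteness condition and convergence of the resulting series in $R$ with coefficients in $K[\alpha_0]\subset K(\alpha_0)$.
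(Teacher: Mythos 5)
Your overall architecture matches the paper's: reduce multiplication closure to commuting powers of $T_0$ past coefficient functions, iterate a degree-lowering substitution, and invert via a geometric series. Your monomial step --- using the unimodular matrix $\bigl(\begin{smallmatrix}i&s\\ j&-r\end{smallmatrix}\bigr)$ and the commutativity of $gr(\widetilde{D}_{r,s})$ to replace $\textbf{p}^a\textbf{q}^b$ by $T_0^{\alpha}\alpha_0^{\beta}$ plus lower-order terms --- is an attractive repackaging of the paper's explicit computation with (\ref{f32}). Note only that the correction term $z'$ is in general an \emph{infinite} convergent series rather than a finite $K$-linear combination (for negative exponents the expansion (\ref{f32}) does not terminate), so the ``finite branching'' you invoke is false as stated; your degree-drop bookkeeping still absorbs this, since for a fixed target degree $\alpha-l\nu$ only the finitely many terms of degree $\ge \alpha-l\nu$ at each level contribute.

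The genuine gap is a circularity between your two main steps. To pass from monomials to $T_0^n f(\alpha_0)$ with $f=p/q$ rational you invoke ``the inversion step below for $q$''; but your inversion step in turn needs closure under multiplication --- both to form $\sum_j(-w)^j$ and, more to the point, to multiply the result on the right by $T_0^{-n}a_n(\alpha_0)^{-1}$, which is exactly a product of the form (element of $R$)$\cdot$(rational function of $\alpha_0$) that the multiplication step is supposed to justify. Even in the special case you actually need, inverting $q(\alpha_0)T_0^n$ (where $w=0$) merely returns the expression $T_0^{-n}q(\alpha_0)^{-1}$, whose membership in $R$ is precisely the unproved claim. The paper breaks this loop by establishing, \emph{before} any appeal to inversion, that $T_0^n q(\alpha_0)^{-1}=q(\alpha_0)^{-1}T_0^n+\sum_{m\ge1}a_{n,m}(\alpha_0)T_0^{n-m\nu}$: one multiplies the polynomial identity (\ref{heq1}) by $q(\alpha_0)^{-1}$ on both sides to get (\ref{h4}) and then repeatedly re-substitutes via (\ref{h5}), the exponent dropping by $\nu$ at each pass so the process converges. (Equivalently, one can solve $x\cdot q(\alpha_0)=T_0^n$ recursively for the coefficients of $x$.) Your proof needs this step, or an equivalent, inserted between the monomial case and the inversion step; with it, the rest of your argument goes through.
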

\begin{proof}
We only need to prove that $R$ is closed under multiplication and inversion. First we show that $R$ is closed under multiplication.

One has
\begin{align}
	\textbf{p}^i\textbf{q}^j\cdot \textbf{p}^u\textbf{q}^t=	\textbf{p}^{i+u}\textbf{q}^{j+t}+\sum_{k\ge1}\frac{[j]_k[u]_k}{k!}\textbf{p}^{i+u-k}\textbf{q}^{j+t-k}, i,j,u,t\in \mathbb Z.
\end{align}

Then for $k,l\in\mathbb{Z}$,
$$\alpha_0^kT_0^l= \textbf{p}^{ks+li} \textbf{q}^{-kr+lj}+\sum_{m\geq 1}\lambda_{m}\textbf{p}^{ks+li-m}\textbf{q}^{-kr+lj-m}, $$
$$T_0^l\alpha_0^k= \textbf{p}^{ks+li} \textbf{q}^{-kr+lj}+\sum_{m\geq 1}\mu_{m}\textbf{p}^{ks+li-m}\textbf{q}^{-kr+lj-m} , \lambda_{m},\mu_{m}\in \mathbb{Q}.$$
Then
\begin{equation}\label{h1}
	T_0^l\alpha_0^k= \alpha_0^kT_0^l +\sum_{m\geq 1}\xi_{m}\textbf{p}^{ks+li-m}\textbf{q}^{-kr+lj-m}, \xi_{m}=\mu_m-\lambda_{m}.
\end{equation}

As $$\alpha_0^{-(j-i)}T_0^{-\nu}=\textbf{p}^{-1}\textbf{q}^{-1}+\sum_{m\ge1}\mu_m \textbf{p}^{-1-m}\textbf{q}^{-1-m},$$ one has
\[\alpha_0^{k-m(j-i)}T_0^{l-m\nu}=\textbf{p}^{ks+li-m}\textbf{q}^{-kr+lj-m}+\sum_{s\ge1} \nu_s \textbf{p}^{ks+li-m-s}\textbf{q}^{-kr+lj-m-s},\]
and \begin{equation}\label{h2}	\textbf{p}^{ks+li-m}\textbf{q}^{-kr+lj-m}=\alpha_0^{k-m(j-i)}T_0^{l-m\nu}-\sum_{s\ge1} \nu_s \textbf{p}^{ks+li-m-s}\textbf{q}^{-kr+lj-m-s}, \nu_s\in \mathbb{Q}. \end{equation}
Substitute (\ref{h2}) into (\ref{h1}) for $m=1,2,3,\cdots,$ successively, one has

$$T_0^l\alpha_0^k= \alpha_0^kT_0^l +\sum_{m\geq 1}\epsilon_{m}\alpha_0^{k-m(j-i)}T_0^{l-m\nu} , \epsilon_{m}\in\mathbb{Q}.$$
Thus for any $f\in K[X], n\in \mathbb Z$, there exist $f_{n,m}\in K(X)$ such that 
\be T_0^nf(\alpha_0)=f(\alpha_0)T_0^n+\sum_{m\geq 1}f_{n,m}(\alpha_0)T_0^{n-m\nu}. \label{heq1}\ee
Now assume $f\ne 0$. Then multiply $ f(\alpha_0)^{-1}$ on both sides of (\ref{heq1}) and reorder the terms, one has
\be\label{h4} T_0^nf(\alpha_0)^{-1}=f(\alpha_0)^{-1}T_0^n+\sum_{m\geq 1}g_{n,m}(\alpha_0)T_0^{n-m\nu}f(\alpha_0)^{-1},\ee  where $g_{n,m}\in K(X),g_{n,m}(\alpha_0)=-f(\alpha_0)^{-1}f_{n,m}(\alpha_0)$. Then one has
\be\label{h5}T_0^{n-m\nu}f(\alpha_0)^{-1}=f(\alpha_0)^{-1}T_0^{n-m\nu}+\sum_{k\geq 1}g_{n-m\nu,k}(\alpha_0)T_0^{n-(m+k)\nu}f(\alpha_0)^{-1}, \ee substitute  $T_0^{n-m\nu}f(\alpha_0)^{-1}$ in (\ref{h4}) by the right hand side  of (\ref{h5}), one has
\begin{align*}
	T_0^nf(\alpha_0)^{-1}&=f(\alpha_0)^{-1}T_0^n+\sum_{m\geq 1}g_{n,m}(\alpha_0)\left(f(\alpha_0)^{-1}T_0^{n-m\nu}+\sum_{k\geq 1}g_{n-m\nu,k}(\alpha_0)T_0^{n-(m+k)\nu}f(\alpha_0)^{-1} \right) \\
	&=f(\alpha_0)^{-1}T_0^n+\sum_{m\geq 1}g_{n,m}(\alpha_0)f(\alpha_0)^{-1}T_0^{n-m\nu}+ \sum_{m,k\geq 1}g_{n,m}(\alpha_0)g_{n-m\nu,k}(\alpha_0)T_0^{n-(m+k)\nu}f(\alpha_0)^{-1}.
\end{align*}
Then substitute  $T_0^{n-(m+k)\nu}f(\alpha_0)^{-1}$ in the right hand side  of above equation by the right hand side  of (\ref{h5}) (with $m$ replaced by $m+k$), and		
continue this process. As $\mathsf{v}_{r, s}(T_0)=1$ and $\nu>0$, eventually one gets

\be T_0^nf(\alpha_0)^{-1} =f(\alpha_0)^{-1}T_0^n+\sum_{m\geq 1}a_{n,m}(\alpha_0)T_0^{n-m\nu}, a_{n,m}\in K(X). \label{heq2}\ee
By (\ref{heq1}) and (\ref{heq2}) ,we know that for any $h\in K(X)$ and $n\in \mathbb Z$, there exist $h_{n,m}\in K(X)$ such that
$$T_0^nh(\alpha_0) = h(\alpha_0)T_0^n +\sum_{m\geq 1}h_{n,m}(\alpha_0)T_0^{n-m\nu} \in R .$$
Then for any $f,g \in K(X), i,j\in\mathbb Z,$
$$f(\alpha_0)T_0^i \cdot g(\alpha_0)T_0^j = f(\alpha_0)g(\alpha_0)T_0^{i+j}+\sum_{m\geq 1}f(\alpha_0)g_{i,m}(\alpha_0)T_0^{i+j-m\nu} \in R.$$
For any $z,w\in R^\times$, $z=\sum_{i\leq n}a_i(\alpha_0)T_0^i, w=\sum_{j\leq m}b_j(\alpha_0)T_0^j,$ one has
$$zw = \sum_{k\le m+n}\sum_{i+j=k}a_i(\alpha_0)T_0^i\cdot b_j(\alpha_0)T_0^j= \sum_{k\le m+n}\sum_{i+j=k}\left( a_i(\alpha_0)b_j(\alpha_0)T_0^k+\sum_{t\ge1} a_i(\alpha_0)b_{j,i,t}(\alpha_0)T_0^{k-t\nu}\right)  \in R .$$
So $R$ is closed under multiplication.

Assume $z\in R^\times$. Then
$$z= \sum_{i\leq n}a_i(\alpha_0)T_0^i = a_n(\alpha_0)T_0^n(1+w), w = \sum_{i\leq n-1}T_0^{-n}a_n(\alpha_0)^{-1}a_i(\alpha_0)T_0^i\in R. $$
Then $\mathsf{v}_{r, s}(w)\le -1$. Let
$$u = (1+ \sum_{i\geq 1}(-1)^iw^i )T_0^{-n}a_n(\alpha_0)^{-1}\in R.$$
Then
$u=z^{-1}$.
Thus $R$ is closed under inversion.
\end{proof}
\begin{prop}
$R = \widehat{D}_{r, s}$.
\end{prop}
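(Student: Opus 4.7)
The plan is to prove $R=\widehat{D}_{r,s}$ by establishing both inclusions, using that $R$ is complete (Lemma \ref{h6}) and $\widehat{D}_{r,s}$ is the closure of $D_1$ in $\widetilde{D}_{r,s}$. The inclusion $R\subseteq \widehat{D}_{r,s}$ is immediate from truncation: any $z=\sum_{i\le n}a_i(\alpha_0)T_0^i\in R$ is the $\mathsf v_{r,s}$-limit of its partial sums $z_N=\sum_{i=n-N}^n a_i(\alpha_0)T_0^i$, and each $z_N$ lies in $D_1$ because $\alpha_0,T_0\in D_1$ and $K(\alpha_0)\subseteq D_1$.

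For the reverse inclusion, completeness of $R$ reduces the task to showing $D_1\subseteq R$. My strategy is to show that every nonzero $z\in D_1$ equals a convergent series in $R$, by peeling off leading terms one at a time: given $z\in D_1$ with $n=\mathsf v_{r,s}(z)$, I will find $a_n\in K(X)$ such that $\mathsf v_{r,s}\bigl(z-a_n(\alpha_0)T_0^n\bigr)<n$. Since $a_n(\alpha_0)T_0^n\in R\cap D_1$, the difference stays in $D_1$, and the construction iterates, producing strictly decreasing degrees $n=n_0>n_1>n_2>\cdots$ and a series $\sum_k a_{n_k}(\alpha_0)T_0^{n_k}$ that converges in $R$ to $z$.

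The key ingredient is matching the leading term, which I plan to handle inside the associated graded algebra. The commutator bound from (\ref{g2}) together with $\nu\ge 1$ makes $gr(\widetilde D_{r,s})$ the commutative Laurent polynomial ring $K(\alpha)[T,T^{-1}]$, in which $\overline{\mathbf p}=\alpha T^r$, $\overline{\mathbf q}=T^s$, $\overline{T_0}=(\alpha T^r)^i(T^s)^j=\alpha^i T$ and $\overline{\alpha_0}=(\alpha T^r)^s(T^s)^{-r}=\alpha^s$. Hence $gr(A_1)=K[\alpha T^r,T^s]$, and passing to its graded field of fractions (using $ri+sj=1$ to make $\alpha^i T$ a degree-one unit and $\alpha^s$ generate the degree-zero part) yields $gr_n(D_1)=K(\alpha^s)\,\alpha^{in}T^n$ for every $n\in\mathbb Z$. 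The leading term of $a(\alpha_0)T_0^n$ is $a(\alpha^s)\,\alpha^{in}T^n$, which runs through all of $gr_n(D_1)$ as $a$ ranges over $K(X)$, supplying the desired $a_n$.

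The main obstacle is the graded identification $gr_n(D_1)=K(\alpha^s)\,\alpha^{in}T^n$: the inclusion $\supseteq$ is built directly from the monomials above, while $\subseteq$ requires observing that any homogeneous $\alpha^a T^n$ obtained as a ratio of homogeneous elements of $K[\alpha T^r,T^s]$ must satisfy $ra\equiv n\pmod s$, so that $a\equiv in\pmod s$ and $\alpha^a\in K(\alpha^s)\alpha^{in}$. Once this identification is in place, the iteration and the completeness of $R$ finish the argument mechanically.
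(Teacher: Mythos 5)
Your proof is correct, but for the hard inclusion $\widehat{D}_{r,s}\subseteq R$ it takes a genuinely different route from the paper. The paper argues "from the generators up": it writes $\alpha_0^jT_0^r=\textbf{p}+\sum_{m\ge1}\lambda_m\textbf{p}^{1-m}\textbf{q}^{-m}$ and $\alpha_0^{-i}T_0^s=\textbf{q}+\sum_{m\ge1}\mu_m\textbf{p}^{1-m}\textbf{q}^{-m}$, inverts these series (by the same substitution trick as in the proof of Lemma \ref{h6}) to conclude $\textbf{p},\textbf{q}\in R$, and then invokes the fact that $R$ is a \emph{complete sub-division-ring} to get $A_1\subseteq D_1\subseteq \widehat{D}_{r,s}\subseteq R$. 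You instead argue "element by element": you identify $gr_n(D_1)=K(\alpha^s)\,\alpha^{in}T^n$ inside $gr(\widetilde D_{r,s})=K(\alpha)[T,T^{-1}]$ and peel off leading terms, so that every $z\in D_1$ is exhibited as a convergent series $\sum_k a_{n_k}(\alpha_0)T_0^{n_k}\in R$. Your version needs less of Lemma \ref{h6} (only that $R$ is a closed subset, not that it is a division ring), and it yields more: an explicit expansion algorithm and the description of the graded pieces of $D_1$, which is of independent interest. The price is that the graded identification carries the real weight and deserves a full argument — in particular the multiplicativity step (the leading term of $uw^{-1}$ is the ratio of leading terms, valid because $\mathsf{v}_{r,s}$ is a degree map and every nonzero homogeneous element of $K(\alpha)[T,T^{-1}]$ is invertible) and the congruence $a\equiv in\pmod{s}$ for exponents occurring in homogeneous elements of $K[\alpha T^r,T^s]$, both of which you sketch correctly but should spell out. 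The paper's route avoids this by reducing everything to two explicit series inversions.
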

\begin{proof}
As $\alpha_0,T_0 \in D_1 , a_i(\alpha_0)T_0^i \in D_1$. As $\widehat{D}_{r,s}$ is a ring complete with respect to $\mathsf{v}_{r,s}$, for any $z = \sum_{i\leq n}a_i(\alpha_0)T_0^i \in R,$ one has $z\in \widehat{D}_{r,s}.$ Thus $R\subseteq \widehat{D}_{r, s}.$

On the other hand, 
$$\alpha_0^j T_0^r =\textbf{p}+ \sum_{m\geq 1}\lambda_{m}\textbf{p}^{1-m}\textbf{q}^{-m}, \alpha_0^{-i} T_0^s =\textbf{q}+ \sum_{m\geq 1}\mu_{m}\textbf{p}^{1-m}\textbf{q}^{-m}, \lambda_m,\mu_m\in \mathbb{Q}.$$
As in the proof of Lemma \ref{h6},one has 
$$\textbf{p}= \alpha_0^jT_0^r +\sum_{m\geq 1}\omega_m\alpha_0^{j-m(j-i)}T_0^{r-m\nu}, \textbf{q}= \alpha_0^{-i}T_0^s +\sum_{m\geq 1}\xi_m\alpha_0^{-i-m(j-i)}T_0^{s-m\nu} , \omega_m,\xi_m\in \mathbb{Q}.$$
So $\textbf{p},\textbf{q}\in R$. As $R$ is a skew field complete with respect to $\mathsf{v}_{r,s}$, one has $\widehat{D}_{r, s}\subseteq R$.
\end{proof}

\section{$\widehat{D}_{r, s}$ satisfies  the commutative centralizer	condition}
\setcounter{equation}{0}\setcounter{theorem}{0}

Recall that $D = L((T^{-1};\widetilde{\delta}))$ is a deformed Laurent series ring with the standard degree map $deg$ as defined in (\ref{e39}). Assume $char\ L=0$. Let $D = L((T^{-1};\widetilde{\delta}))$. Then
 $K= \cap_{i\leq 0}\text{Ker}(\delta_{i})$ is a field of characteristic 0, and is the center of $D$. Assume that there exists $i\in \mathbb Z_{\leq 0}$ with $\delta_i \neq 0$. Let $i_0 = \max\{i\in \mathbb Z_{\leq 0}| \delta_i \neq 0\}$. Then $\delta_{i_0}\in \text{Der}_K(L)$. For $z\in D$, let $C(z)$ be the centralizer of $z$ in $D$.
 
Recall that  a noncommutative algebra $A$ is said to satisfy the commutative centralizer
 condition (the ccc, for short) if the centralizer of each element not in the center of $A$ is a commutative algebra.

\begin{prop}\label{Dp1}
Assume $K=\overline{K}$ (i.e. $K$ is algebraically closed) and $\text{Ker}(\delta_{i_0}) = K$.  Assume $D = L((T^{-1};\widetilde{\delta}))$ and $z\in D\backslash K$. Then 

1. If $z\in L\backslash K$, then $C(z) = L$.

2. If $z\in D$ and $deg(z)\neq 0$, then $C(z) = K((v^{-1}))$, where $v$ is an element in $C(z)$ with the least positive degree. One has
$[C(z):K((z^{-1}))] = |\frac{deg(z)}{deg(v)}|$ if $deg(z) > 0$, and $[C(z):K((z))] =  |\frac{deg(z)}{deg(v)}|$ if $deg(z) < 0$.

3. If $z \in D$ and $deg(z) =0$, then write $z = a_0 + a_{-1}T^{-1} + \cdots$.

3.1. If $a_0\in K$, then $C(z) = C(z-a_0)$, $z-a_0$ is in Case 2.

3.2. If $a_0\in L\backslash K$, then $C(z)\rightarrow L, w = b_0 + b_{-1}T^{-1}+\cdots\mapsto b_0$, is a ring isomorphism.

So $D$ satisfies the ccc. For all	$z\in D\backslash K$, $C(z)$ is a complete subfield of D.
\end{prop}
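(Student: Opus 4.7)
The plan is to treat the four cases by leading-coefficient analysis combined with the completeness of $D$ with respect to $deg$. Part (1) follows directly from Lemma \ref{e70}: since $\text{Ker}(\delta_{i_0}) = K$ by hypothesis, any $z \in L\setminus K$ lies outside $\text{Ker}(\delta_{i_0})$, so $C(z) = L$.

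For Part (2), I assume $n := deg(z) > 0$ (otherwise replace $z$ by $z^{-1}$, noting $C(z^{-1}) = C(z)$). Completeness of $D$ and centrality of $K$ give $K((v^{-1})) \subseteq C(z)$; for the reverse inclusion, the key computation is the leading-term formula
\[ [c_m T^m,\, a T^n] = \bigl(m\,c_m\,\delta_{i_0}(a) - n\,a\,\delta_{i_0}(c_m)\bigr)T^{m+n+i_0-1} + (\text{lower}). \]
This is derived from (\ref{f24}) with the crucial observation that no $k \ge 2$ term contributes at this degree: such a contribution would require $j_1+\cdots+j_k = i_0 + k - 1$ with all $j_i \le i_0$ (by maximality of $i_0$ among indices in $\mathbb Z_{\le 0}$ with $\delta_{j_i}\ne 0$), but then $j_1+\cdots+j_k \le k\,i_0 < i_0+k-1$, a contradiction. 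Applying the formula to $w = c_m T^m + (\text{lower}) \in C(z)$ yields $\delta_{i_0}(c_m^n/a^m) = 0$, hence $c_m^n/a^m \in K$. Writing $m = kd$ (possible because $deg(C(z)^\times) = d\mathbb Z$), the same identity for $v = bT^d + (\text{lower})$ gives $(c_m/b^k)^n \in K^\times$; algebraic closure of $K$ in characteristic $0$ forces $c_m/b^k \in K$. Hence $w - \lambda v^k \in C(z)$ has strictly smaller degree for some $\lambda \in K$, and iterating expresses $w$ as a convergent Laurent series in $v^{-1}$ with $K$-coefficients. The identity $[C(z):K((z^{-1}))] = n/d$ follows because $z = \lambda_0 v^{n/d} + (\text{lower})$ makes $K((v^{-1}))/K((z^{-1}))$ totally ramified of degree $n/d$.

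Part (3.1) is immediate: $a_0 \in K$ is central, so $C(z) = C(z-a_0)$, and $z-a_0$ has negative degree. For Part (3.2), the same leading-term formula with $n=0$ produces leading correction $m\,c_m\,\delta_{i_0}(a_0)\,T^{m+i_0-1}$; for $[w,z] = 0$ with $m \ne 0$ this forces $\delta_{i_0}(a_0) = 0$, contradicting $a_0 \notin K = \text{Ker}(\delta_{i_0})$. So every nonzero $w \in C(z)$ has $deg(w) = 0$, and $\pi: C(z) \to L$, $w \mapsto b_0$, is well-defined; it is a ring homomorphism by (\ref{f24}) and injective because $\pi(w) = 0$ would force $deg(w) \le -1$, hence $w = 0$. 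For surjectivity, given $b_0 \in L$ I construct $w = b_0 + \sum_{k \ge 1} b_{-k} T^{-k} \in C(z)$ inductively: having chosen $b_0,\ldots,b_{-(k-1)}$ so that $[w^{(k-1)}, z]$ has degree at most $-(k-1)+i_0-1$, the coefficient of $T^{-k+i_0-1}$ in $[w^{(k-1)} + b_{-k}T^{-k},\, z]$ is linear in $b_{-k}$ with leading coefficient $-k\,\delta_{i_0}(a_0) \ne 0$, uniquely solvable in the field $L$. Completeness of $D$ ensures convergence to an element of $C(z)$ with image $b_0$. Finally, $C(z)$ is a complete subfield of $D$ in each case since $D$ is a division ring, $C(z)$ is closed under inversion, and the degree topology descends to $C(z)$.

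The principal technical obstacle is the leading-term formula and the combinatorial verification that no higher-$k$ term contributes at the critical degree $m+n+i_0-1$; this rests on the maximality of $i_0$, which rigidly constrains the admissible multi-indices in (\ref{f24}). Once this formula is established, the algebraic-closure step in Part (2) (extracting $n$-th roots) and the iterative construction in Part (3.2) (solving one linear equation per step in the field $L$) are both routine.
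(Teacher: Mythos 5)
Your proposal is correct and follows essentially the same route as the paper's proof: the leading-term commutator formula $[a_iT^i,b_jT^j]=(ia_i\delta_{i_0}(b_j)-jb_j\delta_{i_0}(a_i))T^{i+j+i_0-1}+\text{(lower)}$, the deduction $b_m^n a_n^{-m}\in\operatorname{Ker}\delta_{i_0}=K$ plus algebraic closure to normalize leading coefficients, the subgroup-of-degrees argument with iterated subtraction of powers of $v$ in Case 2, and the degree-by-degree unique solvability of the linear equations for $b_{-k}$ (with coefficient $\pm k\,\delta_{i_0}(a_0)\neq 0$) in Case 3.2. Your explicit verification that no $k\ge 2$ term of (\ref{f24}) contributes at the critical degree is a detail the paper leaves implicit, but the argument is the same.
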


\bp
1. This follows from Lemma \ref{e70}. 

2. Assume  $z = \sum_{i\leq n}a_iT^i$, and $w = \sum_{j\leq m}b_jT^j\in C(z)$.
One has
\begin{align}
a_iT^i \cdot b_jT^j = a_ib_jT^{i+j} + a_i \binom{i}{1}\delta_{i_0}(b_j)T^{i+j-1+i_0} + lower\ degree\ terms ,\ \ and \label{De1}\\		
b_jT^j \cdot a_iT^i = a_ib_jT^{i+j} + b_j\binom{j}{1}\delta_{i_0}(a_i)T^{i+j-1+i_0}+lower\ degree\ terms.\label{De2}
\end{align}
Then 
\be \label{D2}	[a_iT^i, b_jT^j]=\left( ia_i\delta_{i_0}(b_j)-jb_j\delta_{i_0}(a_i)\right) T^{i+j-1+i_0}+lower\ degree\ terms.\ee

For $a\in L$, write $\delta_{i_0}(a)=a'$.  
Then
\[ [z,w] = \sum_{i\leq n,j\leq m}[a_iT^i,b_jT^j] = (na_nb_m'-mb_ma_n')T^{n+m-1+i_0} + lower\ degree\   terms.\]
As $[z,w]=0$,  $na_nb_m'-mb_ma_n'=a_n^{m+1}b_m^{1-n}\delta_{i_0}(a_n^{-m}b_m^n)= 0$, i.e. $a_n^{-m}b_m^n \in \text{Ker}\delta_{i_0} = K$, then $b_m^n = \lambda a_n^m$
for some $\lambda \in K^{\times}$.
For $\widetilde{w} \in C(z)$ with $deg(\widetilde{w}) = m$ and leading term $\widetilde{b_m}T^m$,
one also has  $\widetilde{b_m}^n = \lambda'a_n^m$, where $\lambda'\in K^{\times}$. Then $\widetilde{b_m}^n = \mu b_m^n, \mu\in K^{\times}$.
As $K = \overline{K}$, $\widetilde{b_m}/b_m \in K^{\times}$.

Let $S = \{i \in \mathbb{Z}| \exists w \in C(z), deg(w) = i\}$. Then $S$ is a subgroup of $ \mathbb{Z}$.
As $ K[z,z^{-1}]\subseteq C(z)$, one has  $n\mathbb{Z} \subset S$, one has  $S = l\mathbb{Z}$ for some $l | n$ and $l>0$. As it is shown that for a given degree, the leading term of $x\in C(z)$ is unique
up to a factor of $K^\times$. Then, by the choice of $v$, we can choose $\lambda_{1}\in K^{\times}$ such that 
$w$ and $ \lambda_{1}v^{k_1}$ have the same leading term, where $k_1=m/l$. Let $w_1= w - \lambda_{1} v^{k_1}$,
then $w_1\in C(z)$ and $deg(w_{1}) < deg(w)$.
If $w_1\ne0$, repeat the process for $w_1$.
Continuing in this way, we can find infinitely many elements of $C(z)$,
\[w_s:= w_{s-1} - \lambda_{s} v^{k_s}, \lambda_{s}\in K^{\times}, s\geq 1, k_s\in \mathbb Z, where\quad w_0 := w,\]
such that $deg(w_1)>deg(w_2)>\cdots$ and $k_1>k_2>\cdots $.
Hence $w = \sum_{s\geq 1}\lambda_{s}v^{k_s}$.
Thus $C(z) \subseteq K((v^{-1}))$. The reverse inclusion is clear.
Hence one has  $C(z) = K((v^{{-1}}))$. 

The last statement of Case 2 follows from a simple degree argument.

3. Recall that $z = a_0 + a_{-1}T^{-1} + \cdots\in D$ and $deg(z) =0$. Thus $ a_0\ne0$.

3.1. This is clear.

3.2. Assume $w = \sum_{j\leq m}b_jT^j\in C(z)$. By (\ref{D2}) one has  $m\delta_{i_0}(a_0)b_m = 0$,
as $K=\text{Ker}(\delta_{i_0})$ and $a_0\in L\setminus K$, one has $\delta_{i_0}(a_0)b_m\ne0$, thus $m$ must be $0$.
Then we define \[  \phi:C(z)\rightarrow L, w =b_0+b_{-1}T^{-1} + \cdots\mapsto b_0, \]
which  is a ring homomorphism. Next we will show that
$\phi$ is bijective, i.e.,  for any $ b_0\in L$, there exists some unique $w= b_0+b_{-1}T^{-1}+\cdots \in C(z)$ such that $[z,w] = 0$.

By (\ref{f24}) one has 
\be\label{D4} [z,w] =\sum_{i,j\le 0}[a_iT^i,b_jT^j]= \sum_{i,j\le 0} \sum_{k\ge 1}\sum_{ (j_1,\cdots,j_k )}[\binom{i}{k}a_i\cdot\delta_{j_1}\cdots\delta_{j_k}(b_j)-\binom{j}{k}b_j\cdot\delta_{j_1}\cdots \delta_{j_k}(a_i)]T^{j_1+\cdots+j_k-k+i+j} , 
\ee
which can be written as $\sum_l f_l T^l$  by collecting the terms of the same power, $f_l$ is an algebraic expression of those $a_i,b_j$ and their images under (composition of) certain $\delta_k$.
As $[z,w] = 0$, one has  a series of equations by equating the coefficients $f_l$ of $T^l$ to be 0, from which one can solve $b_i$ for $i=-1,-2,\cdots,$ successively, as follows.
By (\ref{D4}), one has \[[z,w]= \left( b_{-1}\delta_{i_0}(a_0) - a_{-1}\delta_{i_0}(b_0) \right)T^{-2+i_0}+lower\ degree\ terms. \] As $\delta_{i_0}(a_0)\ne0$, there is a unique solution for $b_{-1}$ satisfying $b_{-1}\delta_{i_0}(a_0) - a_{-1}\delta_{i_0}(b_0)=0$. Assume $s<-1$ and  the solutions for $b_k$, $s< k\le-1$, have been found and are all unique. Now we consider $b_{s}$, who firstly occurs in the term $-sb_s\delta_{i_0}(a_0)T^{s-1+i_0} $ which is contained in the expansion of $[a_0,b_sT^s]$. Collecting all the terms of $T^{s-1+i_0} $ and equate the coefficient to be 0, one has 
\[-s\delta_{i_0}(a_0)b_s+d_s=0,\] where $d_s$  is an algebraic expression involves only $b_i, \delta_j(b_i),a_k, \delta_j(a_k), s+1\le i\le 0, s\le k\le 0$. As $char\ L=0$, one has $char\ K=0$. So $s\delta_{i_0}(a_0)\ne0$, thus there is a unique solution for $b_{s}$ in $L$. It follows by induction that for any $ b_0\in L$, there exists some unique $w=\sum_{s\le0}b_{s}T^{s}$ such that $[z,w] = 0$.
Thus $\phi$ is an isomorphism.
\ep

It is clear that $\widetilde{D}_{r,s}=K(\alpha )\left ( \left (T^{-1}; \widetilde{\delta }_{r, s }  \right )  \right )$, where $K$ is an algebraically closed field of characteristic 0, satisfies the condition of above proposition, thus  satisfies the ccc. 
 As $ \widehat{D}_{r,s}\subset \widetilde{D}_{r,s}$, $ \widehat{D}_{r,s}$ also satisfies the ccc. 

The result for $L_\textbf{r}((T^{-1};\widetilde{\delta}))$ analogous  to Proposition \ref{Dp1} can be obtained similarly. Thus $\check{D}_{r,s}=K(\beta)_\textbf{r}\left ( \left (T^{-1}; \widetilde{\eta}_{r, s }  \right )  \right )$,  where $K$ is an algebraically closed field of characteristic 0, also  satisfies the ccc. So one has

\begin{coro}
The division rings $\widetilde{D}_{r,s}, \widehat{D}_{r,s}$ and $\check{D}_{r,s} $ all satisfy the ccc.
\end{coro}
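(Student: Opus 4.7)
The plan is to reduce the corollary to Proposition \ref{Dp1} applied over the algebraic closure of $K$, together with two soft stability observations: that the ccc passes to subrings, and that it is invariant under anti-isomorphism.

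First I would establish the \emph{subring principle}: if $A$ satisfies the ccc and $B$ is a unital subring of $A$, then $B$ satisfies the ccc. Indeed, $Z(A)\cap B\subseteq Z(B)$, so any $z\in B\setminus Z(B)$ lies in $A\setminus Z(A)$; hence $C_A(z)$ is commutative, and $C_B(z)=B\cap C_A(z)$ is commutative as well. Next, the \emph{base-change observation}: the formulas $\delta_{1-k\nu}=\frac{d_k}{k!}(d/d\alpha)^k$ defining $\widetilde{\delta}_{r,s}$, and the analogous $\widetilde{\eta}_{r,s}$, make sense over any field extension $K\subseteq K'$, and give natural inclusions $\widetilde{D}_{r,s}(K)\hookrightarrow\widetilde{D}_{r,s}(K')$ and $\check{D}_{r,s}(K)\hookrightarrow\check{D}_{r,s}(K')$, together with $\widehat{D}_{r,s}(K)\subseteq\widetilde{D}_{r,s}(K)$. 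By the subring principle, it therefore suffices to treat $K$ algebraically closed and to prove the ccc only for $\widetilde{D}_{r,s}$ and $\check{D}_{r,s}$.

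For $\widetilde{D}_{r,s}=K(\alpha)((T^{-1};\widetilde{\delta}_{r,s}))$ with $K=\overline{K}$ of characteristic $0$, I would check the hypotheses of Proposition \ref{Dp1}. By (\ref{e64}), the largest $i\le 0$ with $\delta_i\ne 0$ is $i_0=1-\nu$ and $\delta_{i_0}=s^{-1}\,d/d\alpha$; its kernel in $K(\alpha)$ equals the constants $K$ (a short check using $\gcd$ for rational functions), and this coincides with $\bigcap_{i\le 0}\ker\delta_i=Z(\widetilde{D}_{r,s})$ by Lemma \ref{e70}. All hypotheses of Proposition \ref{Dp1} are thus met, so $\widetilde{D}_{r,s}$ satisfies the ccc.

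For the right-handed ring $\check{D}_{r,s}=K(\beta)_\textbf{r}((T^{-1};\widetilde{\eta}_{r,s}))$ I would transport the argument through the anti-isomorphism $\phi$ appearing in the sketch of proof of Corollary \ref{f30}: this identifies $\check{D}_{r,s}$ with the opposite of a left deformed Laurent series ring $K(\beta)((T^{-1};\widetilde{\delta}))$ which again satisfies the hypotheses of Proposition \ref{Dp1} (now with $\delta_{i_0}=r^{-1}\,d/d\beta$). Since an anti-isomorphism carries centers to centers and centralizers to centralizers as sets, while the commutativity of a subset is an anti-isomorphism invariant, the ccc transfers verbatim. I don't anticipate any serious obstacle; the whole proof is a repackaging of Proposition \ref{Dp1} via the two stability properties above. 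The only point deserving a second look is confirming that the base-change maps are genuine ring inclusions (automatic, since the formulas for $\widetilde{\delta}_{r,s}$ and $\widetilde{\eta}_{r,s}$ are field-independent) and that the anti-isomorphism preserves the ccc, both of which are routine.
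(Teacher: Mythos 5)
Your proposal is correct and its skeleton matches the paper's: both proofs rest on applying Proposition \ref{Dp1} to $\widetilde{D}_{r,s}$ and then passing the ccc down to the subring $\widehat{D}_{r,s}$. You diverge from the paper on two points, both of which are sound and arguably improvements. First, the paper's argument is stated only for $K$ algebraically closed (the sentence before the corollary explicitly assumes $K=\overline{K}$, which is what Proposition \ref{Dp1} needs since its proof extracts an $n$-th root in $K$ from $\widetilde{b_m}^n=\mu b_m^n$); your base-change step $\widetilde{D}_{r,s}(K)\hookrightarrow\widetilde{D}_{r,s}(\overline{K})$ combined with the subring principle removes that restriction, and the inclusion is indeed a ring map because the multiplication formula (\ref{f24}) is written entirely in terms of the $\delta_i$, which are defined by (\ref{e64}) independently of the base field. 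Second, for $\check{D}_{r,s}$ the paper says the right-handed analogue of Proposition \ref{Dp1} "can be obtained similarly," i.e.\ it re-runs the centralizer computation on the right; you instead transport the statement through the anti-isomorphism $\phi$ from the sketch of Corollary \ref{f30}, using that anti-isomorphisms preserve centers, centralizers as sets, and commutativity of subsets. This is legitimate and avoids duplicating the proof; the resulting left ring is $K(\beta)((T^{-1};\widetilde{\delta}))$ with $\delta_{i_0}=r^{-1}d/d\beta$ (in effect $\widetilde{D}_{s,r}$ with $\alpha$ renamed), and since $r\neq 0$ its kernel is $K$, so Proposition \ref{Dp1} applies. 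What the paper's route buys is self-containedness within each handedness; what yours buys is a cleaner reduction and a statement valid for arbitrary $K$ of characteristic $0$.
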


\end{document}